\newtheorem{theorem}{Theorem}[section]
\newtheorem{lemma}[theorem]{Lemma}
\newtheorem{proposition}[theorem]{Proposition}
\theoremstyle{definition}
\newcommand{\mfrak}[1]{\mathfrak{#1}}
\newcommand{\mscr}[1]{\mathscr{#1}}
\newcommand{\mcal}[1]{\mathcal{#1}}
\newcommand{\bb}[1]{\mathbb{#1}}
\newcommand{\Norm}[1]{\left\Vert #1 \right\Vert}
\newcommand{\rd}[0]{\mathrm{d} }
\newcommand{\ang}[1]{\langle #1 \rangle}
\newcommand{\Abs}[1]{\left\vert#1\right\vert}
\newcommand{\Ang}[1]{\left\langle #1 \right\rangle}
\newcommand{\closure}[1]{\overline{#1}} % closure
\newcommand{\tderiv}[0]{\partial_t} % partial derivative with respect to time
\newcommand{\data}[0]{y} % data
\newcommand{\dataRV}[0]{Y} % data random variable
\newcommand{\state}[0]{u} % state
\newcommand{\statespace}[0]{\mcal{U}} % state space
\newcommand{\model}[0]{\mcal{M}} % model
\newcommand{\param}[0]{\theta} % parameter
\newcommand{\paramspace}[0]{\Theta} % parameter space
\newcommand{\modelerror}[0]{\delta} % model error in parameter-to-state mapping
\newcommand{\modelerrorspace}[0]{\mscr{D}} % model error space
\newcommand{\obsoperator}[0]{\mcal{O}} % observation operator
\newcommand{\noise}[0]{\varepsilon} % observation noise
\newcommand{\normaldist}[2]{\mcal{N}(#1,#2)} % notation for normal distribution with mean #1 and covariance #2
\newcommand{\appr}[0]{\textup{A}} % superscript for approximate objects
\newcommand{\best}[0]{\dagger} % superscript for best objects
\newcommand{\enh}[0]{\textup{E}} % superscript for enhanced noise objects
\newcommand{\enhancednoisemodel}[0]{\mfrak{u}} %  error in state
\newcommand{\enhancednoisecovariance}[0]{\Sigma_\noise+\obsoperator \Sigma_{\enhancednoisemodel}\obsoperator^{\ast}} % enhanced noise covariance
\newcommand{\joint}[0]{\textup{J}} % superscript for joint parameter-error objects
\newcommand{\marg}[0]{\textup{M}} % superscript for marginal objects
\newcommand{\op}[0]{\textup{op}} % subscript for operator norm
\newcommand{\spn}[0]{\textup{span}} % span of a set of vectors
\newcommand{\kernel}[0]{\textup{ker}} % null space of a linear map
\newcommand{\range}[0]{\textup{ran}} % image of a linear map
\newcommand{\law}[1]{\mu_{#1}} % law of a random variable 
\newcommand{\KL}[0]{\textup{KL}} % subscript for KL divergence
\numberwithin{equation}{section}
\begin{document}
\title{Choosing observation operators to mitigate model error in Bayesian inverse problems}

\author[1]{Nada Cvetkovi\'{c}}
\author[2]{Han Cheng Lie}
\author[1]{Harshit Bansal}
\author[1]{Karen Veroy}
\affil[1]{~Centre for Analysis, Scientific Computing and Applications, Eindhoven University of Technology, Groene Loper 5, 5612 AZ Eindhoven, the Netherlands}
\affil[2]{~Institut f\"ur Mathematik, Universit\"at Potsdam, Campus Golm, Haus 9, Karl-Liebknecht-Stra{\ss}e 24--25, Potsdam OT Golm 14476, Germany}
\renewcommand\Affilfont{\small}

\date{}
\maketitle

\begin{abstract}
In statistical inference, a discrepancy between the parameter-to-observable map that generates the data and the parameter-to-observable map that is used for inference can lead to misspecified likelihoods and thus to incorrect estimates.
In many inverse problems, the parameter-to-observable map is the composition of a linear state-to-observable map called an `observation operator' and a possibly nonlinear parameter-to-state map called the `model'.
We consider such Bayesian inverse problems where the discrepancy in the parameter-to-observable map is due to the use of an approximate model that differs from the best model, i.e. to nonzero `model error'.
Multiple approaches have been proposed to address such discrepancies, each leading to a specific posterior.
We show how to use local Lipschitz stability estimates of posteriors with respect to likelihood perturbations to bound the Kullback--Leibler divergence of the posterior of each approach with respect to the posterior associated to the best model.
Our bounds lead to criteria for choosing observation operators that mitigate the effect of model error for Bayesian inverse problems of this type.
We illustrate one such criterion on an advection-diffusion-reaction PDE inverse problem from the literature, and use this example to discuss the importance and challenges of model error-aware inference.
\end{abstract}

\textbf{Keywords:} Model error, Bayesian inverse problems, experimental design, misspecified likelihood, posterior error bounds

%MSC: 62F15   	Bayesian inference (Primary)
%MSC: 62K99   	Statistics > Design of statistical experiments 
%MSC: 62G99		Statistics > Nonparametric inference

\section{Introduction}
\label{sec_introduction}

In many applications, one considers an inverse problem where the data is a noisy observation of the true state of some phenomenon of interest, where the true state is the output of a parameter-to-state mapping or `model' corresponding to an unknown parameter.
That is, for the unknown true parameter $\param^{\best}$ and the best model $\model^{\best}$, the true state is $\state^{\best}=\model^{\best}(\param^{\best})$, and the data $y$ is a realisation of the random variable
\begin{equation*}
 \dataRV\coloneqq \obsoperator \circ \model^{\best}(\param^{\best})+\noise
\end{equation*}
for a state-to-observable map or `observation operator' $\obsoperator$ and additive noise $\noise$.
The inverse problem consists in inferring the data-generating parameter $\param^{\best}$ from the data $y$.

We consider Bayesian inverse problems with data taking values in $\bb{R}^{n}$, centred Gaussian observation noise, and linear observation operators.
In this setting, the noise $\noise$ has the normal distribution $\normaldist{m_{\noise}}{\Sigma_{\noise}}$ for $m_{\noise}=0\in\bb{R}^{n}$ and positive definite covariance $\Sigma_{\noise}\in\bb{R}^{n\times n}$, and the observation operator $\obsoperator$ is a linear mapping from the `state space' $\statespace$ of admissible states to $\bb{R}^{n}$.
In the Bayesian approach to inverse problems, one fixes a possibly infinite-dimensional parameter space $\paramspace$ consisting of admissible values of $\param^{\best}$, and describes the unknown true parameter $\param^{\best}$ using a random variable $\param$ with prior law $\mu_\param$ supported on $\paramspace$. 
Under certain assumptions on the prior $\mu_\param$, the observation operator $\obsoperator$, and the model $\model^{\best}$, the corresponding posterior measure $\mu^{\data,\best}_{\param}$ is well-defined and admits the following density with respect to the prior $\mu_\param$:
\begin{equation*}
\paramspace\ni \param' \mapsto \frac{\rd\mu^{\data,\best}_{\param}}{\rd\mu_\param}(\param')=\frac{\exp(-\tfrac{1}{2}\Norm{\data-\obsoperator \circ \model^{\best}(\param')}_{\Sigma_\noise^{-1}}^{2})}{\displaystyle{\int_{\paramspace}}\exp( -\tfrac{1}{2}\Norm{\data-\obsoperator \circ \model^{\best}(\hat{\param})}_{\Sigma_\noise^{-1}}^{2})\rd\mu_\param(\hat{\param})}.
\end{equation*}
See e.g. \cite[Theorem 4.1]{Stuart2010} for sufficient conditions for well-definedness in the case of a Gaussian prior $\mu_\param$.

In practice, the best model $\model^{\best}:\paramspace\to\statespace$ is not available, so an approximate model $\model:\paramspace\to\statespace$ is used instead.
Alternatively, $\model^{\best}$ may be available but impractical or costly to evaluate: in the context of multifidelity or reduced order models, $\model^{\best}$ may be the element of a collection of models that yields the most accurate predictions of state, and $\model$ may be a reduced-order model, an emulator, or a surrogate, i.e. a model that yields less accurate predictions but can be evaluated more cheaply and quickly than $\model^{\best}$.

We shall refer to the difference $\modelerror^{\best}\coloneqq \model^{\best}-\model$ as the `model error of the approximate model', or simply, the `model error'.
Model error is also known as `model inadequacy', `model discrepancy', or `structural uncertainty', see e.g. \cite{KennedyOHagan2001,Brynjarsdottir2014}.
We do not use the term `model misspecification', since this term is used in the statistics literature to refer to the setting where the parameter space $\paramspace$ does not contain the true parameter $\param^{\best}$; see e.g. \cite[Section 8.5]{GhosalvanderVaart2017}.

In the context of inverse problems, model error is important because it may lead to a wrong or `misspecified' likelihood, which in turn may lead to incorrect inference.
The negative effects may persist even after applying or approximating common limits from statistics.
For example, numerical experiments in \cite[Section 3]{Brynjarsdottir2014} show how ignoring the model error results in posterior distributions that do not converge to the true data-generating parameter as the number of observations grows larger.
An analytical example in \cite[Section 8.1]{AbdulleGaregnani2020} shows a similar phenomenon for the problem of inferring the initial condition of a heat equation over a finite time interval from a noisy observation of the terminal state, in the limit of small noise.

The potential for incorrect inference due to model error raises the question of how to mitigate the effect of model error in Bayesian inverse problems.
We approach this question from the perspective of selecting an appropriate observation operator $\obsoperator$.
To see why this perspective is reasonable, note that the linearity of $\obsoperator$ and the definition of the model error $\modelerror^\best$ imply that $\obsoperator\model^\best=\obsoperator\model +\obsoperator \modelerror^\best$. 
Using this fact in the equation for $\dataRV$ shows that only the observed model error $\obsoperator \modelerror^\best$ has the potential to affect inference.
In other words, ignoring model error $\modelerror^{\best}$ leads to a misspecified likelihood if and only if $\obsoperator   \modelerror^{\best}(\param')$ is nonzero with positive $\mu_\param$-probability.
This suggests that a possible approach to reduce the effect of model error on Bayesian inference is to choose an observation operator $\obsoperator$ for which $\obsoperator   \modelerror^{\best}(\param')$ is closer to zero with higher $\mu_\param$-probability.

The approach of choosing observation operators suggests a connection with experimental design.
In Bayesian experimental design, the main task is to select observations in order to maximise information about the parameter to be inferred.
To quantify the information gain, one may use the Kullback--Leibler divergence of the posterior with respect to the prior, for example.
In contrast, we control the Kullback--Leibler divergence between pairs of posteriors defined by the same prior but different likelihoods, by using the $L^1_{\mu_\param}$ difference between the pair of negative log-likelihoods or `misfits'.
The main task is then to select observations in order to minimise this $L^1_{\mu_\param}$ difference.
This approach can be viewed as experimental design for mitigating the effect of model error on inference.

\paragraph{Contributions} In this paper, we consider some approaches for accounting for model error in Bayesian inference: the approximate approach that ignores model error; the `enhanced error approach' of \cite{Kaipio2006}; the `joint inference approach' that aims to infer both $\param^{\best}$ and $\modelerror^{\best}$; and the `marginalisation approach', which integrates out the model error component of the posterior from the joint inference approach.
For the first three approaches, we compute upper bounds for the $L^1_{\mu_\param}$ difference between the misfit of each approach and the misfit associated to the best model $\model^\best$.
We use these bounds to bound the Kullback--Leibler divergence between the posterior that results from each approach and the best posterior $\mu^{\data,\best}_{\param}$.
We also compute upper bounds for the Kullback--Leibler divergence with respect to the approximate posterior $\mu^{\data,\appr}_{\param}$ that results from using the given approximate model $\model$ instead of the best model $\model^\best$.
We express each upper bound in terms of the model error $\modelerror^{\best}$ and the objects that the approach uses to account for the model error.

To prove these bounds, we exploit the assumption of additive Gaussian noise to prove a lemma concerning the difference of two quadratic forms that are weighted by different matrices; see \Cref{lem_L1norm_diff_quadratic_forms}. 
We prove the upper bounds on the Kullback--Leibler divergences by combining the upper bounds on the $L^1_{\mu_\param}$ differences between the misfits with a local Lipschitz stability estimate of posteriors from \cite{Sprungk2020}.
An important advantage of this estimate is that it holds for the Kullback--Leibler topology under the mild assumption of $L^1_{\mu_\param}$-integrable misfits; see \Cref{thm_theorem11_and_proposition6_Sprungk2020}.

We use the upper bounds on the Kullback--Leibler divergence with respect to the best posterior $\mu^{\data,\best}_{\param}$ to derive sufficient conditions on the observation operator $\obsoperator$, the model error $\modelerror^{\best}$, and the objects that the approach uses to account for the model error, in order for the resulting posterior $\mu^{\data,\bullet}_{\param}$ to closely approximate $\mu^{\data,\best}_{\param}$ in the Kullback--Leibler topology.
These conditions give `positive' criteria for choosing observation operators to mitigate the effect of model error on inference.
We use the upper bounds with respect to the approximate posterior $\mu^{\data,\appr}_{\param}$ to derive necessary conditions for the resulting posterior $\mu^{\data,\bullet}_{\param}$ to differ from $\mu^{\data,\appr}_{\param}$.
These conditions give `negative' criteria which one can use to exclude observation operators from consideration when one aims to mitigate the effect of model error on Bayesian inference.

We complement our theoretical findings with numerical experiments for an inverse problem from the literature, namely the inverse problem of inferring the time amplitude of a source term in an advection-diffusion-reaction partial differential equation (PDE) when the initial condition is unknown.

\subsection{Overview of related work}
\label{ssec_related_work}

The importance of accounting for the model error is well-documented in the literature on Bayesian inverse problems; see e.g. \cite{Brynjarsdottir2014, Kaipio2006, KennedyOHagan2001} and the references therein. 
The `Bayesian approximation error' and `enhanced error' approaches due to \cite{Kaipio2007} and \cite{Kaipio2006} respectively have been applied in various contexts.
For example, the enhanced error approach has been applied with pre-marginalisation to electrical impedance tomography \cite{Nissinen2009}, diffuse optical tomography \cite{Kolehmainen2011}, and inversion for coefficient fields in the presence of uncertain conductivities \cite{Nicholson2018}.

Various methods have been developed to estimate or account for model error in Bayesian inference.
For example, the work \cite{Calvetti2018} presented an iterative algorithm to update an estimate of the model error in a model order reduction context, and proved geometric convergence of the algorithm.
The authors of \cite{Sargsyan2015,Sargsyan2019} take a different perspective: instead of viewing model errors as additive perturbations to an approximate model, they incorporate these model errors into parametrisations of some phenomenon of interest, and use polynomial chaos expansions.
Information theory has been used to quantify model error uncertainty or model bias in goal-oriented inference settings \cite{Hall2018} and by exploiting concentration inequalities \cite{Gourgoulias2020}.
Optimal transport was applied to tackle problems due to model errors in \cite{Scarinci2021}.

In the context of Bayesian optimal experimental design, model error is also referred to as `model uncertainty'.
The work \cite{Koval2020} considers A-optimal designs for inverse problems in the presence of so-called `irreducible' model uncertainties, i.e. uncertainties that cannot be reduced by collecting more data. 
In contrast, the work \cite{Alexanderian2021} considers reducible uncertainties, and describes an A-optimality criterion that involves marginalising out this reducible uncertainty. 
The work \cite{Alexanderian2022} combines the Laplace approximation and the Bayesian approximation error approach to find A-optimal designs for nonlinear Bayesian inverse problems.

As far as we are aware, the works most closely related to our paper are \cite{Cao2022,Duong2022}.
The work \cite{Cao2022} considers Bayesian inverse problems where the observation operator may be nonlinear and the model is approximated by a neural network.
In particular, \cite[Theorem 1]{Cao2022} bounds the Kullback--Leibler divergence between the original and approximated posterior in terms of an $L^p$ norm for $p\geq 2$ of the model error itself. 
In contrast, we consider linear observation operators, do not focus on any specific class of approximate models, and bound the Kullback--Leibler divergence in terms of $L^1$ norms of the observed model error.
In \cite{Duong2022}, the main stability estimate \cite[Theorem 3.4]{Duong2022} bounds the expected utility of a design in terms of a sequence of likelihoods.
The focus of \cite{Duong2022} is not model error, but on the convergence of the utility of approximate optimal designs corresponding to a convergent sequence of likelihoods.
The goal is `standard' optimal experimental design, i.e. on choosing observations to maximise the information content of each additional data point and thereby to reduce posterior uncertainty.
In contrast, we make pairwise comparisons of likelihoods and focus on choosing observations to mitigate the effect of model error on Bayesian inference.

\subsection{Outline}
\label{ssec_outline}

We describe the notation that we use in \Cref{ssec_notation}.
In \Cref{sec_bounds_on_KL_divergence}, we state and prove upper bounds on $L^1_{\mu_\param}$ errors between misfits and upper bounds on Kullback--Leibler divergences between the best posterior and posteriors that account for model error in different ways.
We use these bounds to discuss positive and negative criteria for choosing observation operators to mitigate the effect of model error on Bayesian inference.
In \Cref{sec_example}, we consider a specific inverse problem and use it to illustrate a positive criterion for choosing observation operators, in the setting where one uses the approximate posterior for inference.
In \Cref{sec_numerical_simulations}, we describe and discuss the numerical experiments that we used to investigate the behaviour of the approximate posterior.
We conclude in \Cref{sec_conclusion}.
In \Cref{sec_technical_lemmas}, we present proofs of lemmas and results that are not stated in the main text.

\subsection{Notation}
\label{ssec_notation}

The notation $a\leftarrow b$ indicates the replacement of $a$ using $b$, while $a\coloneqq b$ indicates equality by definition.
For $N\in\bb{N}$, $[N]\coloneqq\{1,\ldots,N\}$, and $[N]_0\coloneqq\{0,1,\ldots,N\}$.
Let $(\Omega,\mcal{F},\bb{P})$ be the underlying probability space for all random variables below. 
Given a measurable space $(E,\mcal{E})$ and an $E$-valued random variable $\xi:(\Omega,\mcal{F})\to (E,\mcal{E})$, we denote the law of $\xi$ by $\law{\xi}$. If $E$ is a topological space, we denote its Borel $\sigma$-algebra by $\mcal{B}(E)$, the set of all Borel probability measures on $E$ by $\mcal{P}(E)$, and the support of an arbitrary $\mu\in\mcal{P}(E)$ by $\textup{supp}(\mu)$.
For $S\in\mcal{B}(E)$, $\closure{S}$ denotes the closure of $S$.
For $\nu\in \mcal{P}(E)$ and an $E$-valued random variable $\xi$, the expression $\xi \sim \nu$ means that $\xi$ is distributed according to $\nu$, i.e. $\law{\xi}=\nu$ as measures.
Given $\mu,\nu\in\mcal{P}(E)$, we denote the absolute continuity of $\mu$ with respect to $\nu$ by $\mu\ll\nu$, the corresponding Radon--Nikodym derivative by $\tfrac{\rd \mu}{\rd \nu}$, and the Kullback--Leibler (KL) divergence of $\mu$ with respect to $\nu$ by $d_{\KL}(\mu\Vert \nu)$, which has the value $\int_{E} \log \frac{\rd \mu}{\rd \nu} \rd\mu$ if $\mu\ll\nu$ and $+\infty$ otherwise. 
For $\mu\in\mcal{P}(E)$ and $p\geq 1$, $\Norm{\cdot}_{L^p_\mu}$ denotes the $L^p$ norm with respect to $\mu$; we shall specify the domain and codomain when necessary.
We denote a Gaussian measure with mean $m$ and covariance operator $\Sigma$ by $\normaldist{m}{\Sigma}$.
Some useful facts about Gaussian measures on Banach spaces are given in \cite[Section 6.3]{Stuart2010}, for example.

Given $\mu\in\mcal{P}(E)$ and $\Phi:E\to\bb{R}$, we write $\mu_\Phi$ to denote the measure that has Radon--Nikodym derivative $\tfrac{\rd \mu_\Phi}{\rd \mu}=\tfrac{\exp(-\Phi)}{Z} $, where $Z\coloneqq \int_E \exp(-\Phi(x')) \rd \mu(x')$.
We shall use this notation to denote the posterior associated to a given prior and misfit below.

For $d\in\bb{N}$ and a symmetric positive semidefinite matrix $L\in\bb{R}^{d\times d}$, 
\begin{equation}
 \label{eq_matrix_weighted_inner_product_norm}
 \ang{a,b}_{L}\coloneqq a^\top L b=\ang{L^{1/2}a,L^{1/2}b},\quad \Abs{a}_{L}\coloneqq \ang{a,a}_{L}^{1/2}=\Abs{L^{1/2}a}.
\end{equation}
We use single bars `$\Abs{\cdot}$' to emphasise when we take norms of vectors in finite-dimensional spaces.
If $L$ is the identity matrix, then we omit the subscript.
Given normed vector spaces $(V_i,\Norm{\cdot}_{V_i})$ for $i=1,2$, $V_i^\ast$ denotes the continuous dual space of $V_i$.
For a bounded linear operator $L:V_1\to V_2$, we denote the kernel, adjoint, and operator norm of $L$ by $\kernel( L)$, $L^\ast$, and $\Norm{L}_{\op}$, and we write the image of $v\in V_1$ under $L$ as $Lv$.
For a matrix $A\in\bb{R}^{m\times n}$, $A^\ast$ and $A^+$ denote the adjoint and Moore--Penrose pseudoinverse of $A$ respectively.

\section{Bounds on KL divergence in terms of observed model error}
\label{sec_bounds_on_KL_divergence}

In this section, we state the theoretical results of our paper, namely bounds on the KL divergence between pairs of posteriors, expressed in terms of the model error and the objects that some approaches use to account for model error.
These bounds show that it is not the model error itself but rather the \emph{observed} model error that affects inference.
We consider four natural types of posteriors that account for model error in different ways.
After introducing the common objects and basic assumptions, we state bounds on certain KL divergences for the approximate posterior, enhanced noise posterior, joint posterior, and marginal posterior in Sections \ref{ssec_KL_bound_approximate_posterior}, \ref{ssec_KL_bound_enhanced_noise}, \ref{ssec_KL_error_joint_posterior}, and \ref{ssec_KL_error_marginal_posterior} respectively.
We emphasise that we use the KL divergence only as a measure of the discrepancy between different posteriors, and not because the KL divergence is often used in classical optimal experimental design. Indeed, our bounds could have also been expressed using other measures of discrepancy that are considered in \cite{Sprungk2020}, such as the Hellinger distance or the total variation distance. Moreover, in the example inverse problem in \Cref{sec_example}, we use one such bound to guide the choice of observations, but do not estimate the terms in the bounds. The efficient and accurate estimation of the terms in the bounds below is beyond the scope of this paper.

We assume that the `parameter space' $\paramspace$ of admissible values of the unknown true data-generating parameter $\param^{\best}$ is a Borel subset of a separable Banach space, and fix a prior $\mu_\param\in\mcal{P}(\paramspace)$ that is supported on $\paramspace$.
Let $(\statespace,\Norm{\cdot}_{\statespace})$ denote the `state space', which is a Banach space containing both the image of $\paramspace$ under the best model $\model^\best$ and the image of $\paramspace$ under the approximate model $\model$.
In many inverse problems, a state $\state\in\statespace$ is a function on some domain that describes some observable phenomenon of interest, and is often expressed as the solution of a differential equation; see \Cref{sec_example} below.
We shall refer to any measurable mapping from $\paramspace$ to $\statespace$ as a `model', and to any continuous linear mapping $\obsoperator$ from $\statespace$ to $\bb{R}^{n}$ as an `observation operator', where $n$ is fixed but arbitrary.
Thus, $\paramspace$ and $\statespace$ may be infinite-dimensional, but the data space is finite-dimensional. Infinite-dimensional data settings will involve additional technical issues --- see e.g. \cite[Remark 3.8]{Stuart2010} --- that are not directly related to the main ideas of this paper.

We shall use the following local Lipschitz stability result to prove our bounds.
\begin{theorem}
\label{thm_theorem11_and_proposition6_Sprungk2020}
 Let $\mu\in\mcal{P}(\paramspace)$ and $\Phi^{(i)}\in L^1_\mu(E,\bb{R}_{\geq 0})$ for $i=1,2$.
Then 
\begin{equation*}
 d_{\KL}(\mu_{\Phi^{(1)}}\Vert \mu_{\Phi^{(2)}})\leq 2\exp\left(\Norm{\Phi^{(1)}}_{L^1_\mu}+\Norm{\Phi^{(1)}-\Phi^{(2)}}_{L^1_\mu}\right)\Norm{\Phi^{(1)}-\Phi^{(2)}}_{L^1_\mu},
\end{equation*}
and thus $\mu_{\Phi^{(1)}}$ is absolutely continuous with respect to $\mu_{\Phi^{(2)}}$.
In particular,
\begin{align*}
 \max\{d_{\KL}(\mu_{\Phi^{(1)}}\Vert \mu_{\Phi^{(2)}}),d_{\KL}(\mu_{\Phi^{(2)}}\Vert \mu_{\Phi^{(1)}})\}\leq 2\exp\left(2\Norm{\Phi^{(1)}}_{L^1_\mu}+2\Norm{\Phi^{(2)}}_{L^1_\mu}\right)\Norm{\Phi^{(1)}-\Phi^{(2)}}_{L^1_\mu}
\end{align*}
and thus $\mu_{\Phi^{(1)}}$ and $\mu_{\Phi^{(2)}}$ are mutually equivalent.
\end{theorem}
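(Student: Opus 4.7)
The plan is to derive an explicit formula for $d_\KL(\mu_{\Phi^{(1)}}\Vert \mu_{\Phi^{(2)}})$ and then bound it termwise. Writing $Z_i \coloneqq \int_E \exp(-\Phi^{(i)})\rd\mu$ for the normalising constants and $g \coloneqq \Phi^{(1)} - \Phi^{(2)}$, the Radon--Nikodym derivative is $\frac{\rd\mu_{\Phi^{(1)}}}{\rd\mu_{\Phi^{(2)}}} = (Z_2/Z_1)\exp(\Phi^{(2)}-\Phi^{(1)})$, which is well-defined because $\Phi^{(i)} \in L^1_\mu$ forces $\Phi^{(i)}<\infty$ $\mu$-a.e. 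Integrating its logarithm against $\mu_{\Phi^{(1)}}$ yields the decomposition
\[
d_\KL(\mu_{\Phi^{(1)}}\Vert \mu_{\Phi^{(2)}}) \;=\; -\int_E g\,\rd\mu_{\Phi^{(1)}} \;+\; \log\frac{Z_2}{Z_1},
\]
so it suffices to bound each summand in absolute value by $\exp(\Norm{\Phi^{(1)}}_{L^1_\mu}+\Norm{g}_{L^1_\mu})\Norm{g}_{L^1_\mu}$ and sum.

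The first summand is the easier one: Jensen's inequality applied to the convex function $-\log$ gives $Z_1 \geq \exp(-\Norm{\Phi^{(1)}}_{L^1_\mu})$, so changing measure to $\mu$ yields $\Abs{\int_E g\,\rd\mu_{\Phi^{(1)}}} \leq Z_1^{-1}\Norm{g}_{L^1_\mu} \leq \exp(\Norm{\Phi^{(1)}}_{L^1_\mu})\Norm{g}_{L^1_\mu}$. The second summand is where the main work lies. I would combine three ingredients: the pointwise estimate $\abs{e^{-a}-e^{-b}}\leq\abs{a-b}$ for $a,b\geq 0$, which integrates to $\abs{Z_2 - Z_1}\leq\Norm{g}_{L^1_\mu}$; the mean-value bound $\abs{\log(Z_2/Z_1)} \leq \abs{Z_2-Z_1}/\min(Z_1,Z_2)$; and the Jensen lower bound $\min(Z_1,Z_2) \geq \exp(-\max(\Norm{\Phi^{(1)}}_{L^1_\mu},\Norm{\Phi^{(2)}}_{L^1_\mu}))$. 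The crucial final step, and the real obstacle, is to convert the symmetric $\max$ in the exponent into the asymmetric expression $\Norm{\Phi^{(1)}}_{L^1_\mu}+\Norm{g}_{L^1_\mu}$ appearing in the statement via the triangle inequality $\Norm{\Phi^{(2)}}_{L^1_\mu} \leq \Norm{\Phi^{(1)}}_{L^1_\mu} + \Norm{g}_{L^1_\mu}$; without this step, the bound would unnecessarily depend on $\Norm{\Phi^{(2)}}_{L^1_\mu}$ and the statement would fail as written.

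Absolute continuity is then immediate because each $\mu_{\Phi^{(i)}}$ is in fact equivalent to $\mu$ (since $\exp(-\Phi^{(i)}) > 0$ $\mu$-a.e.). For the symmetric bound in the second part, I apply the first estimate in both directions and then enlarge each exponent using the cruder triangle bound $\Norm{g}_{L^1_\mu} \leq \Norm{\Phi^{(1)}}_{L^1_\mu} + \Norm{\Phi^{(2)}}_{L^1_\mu}$ to unify them into the symmetric exponent $2\Norm{\Phi^{(1)}}_{L^1_\mu} + 2\Norm{\Phi^{(2)}}_{L^1_\mu}$; mutual equivalence then follows either from the $\mu$-equivalence already noted or from the finiteness of both KL divergences together with standard properties of relative entropy.
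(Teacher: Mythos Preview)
Your argument is correct. The decomposition $d_\KL(\mu_{\Phi^{(1)}}\Vert \mu_{\Phi^{(2)}}) = -\int_E g\,\rd\mu_{\Phi^{(1)}} + \log(Z_2/Z_1)$ is valid, and your bounds on each summand go through: the first uses $\exp(-\Phi^{(1)})\leq 1$ together with the Jensen lower bound $Z_1\geq\exp(-\Norm{\Phi^{(1)}}_{L^1_\mu})$, and the second combines the Lipschitz estimate $|e^{-a}-e^{-b}|\leq|a-b|$, the mean-value bound for $\log$, and the triangle-inequality conversion $\Norm{\Phi^{(2)}}_{L^1_\mu}\leq\Norm{\Phi^{(1)}}_{L^1_\mu}+\Norm{g}_{L^1_\mu}$. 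Summing gives at most $2\exp(\Norm{\Phi^{(1)}}_{L^1_\mu}+\Norm{g}_{L^1_\mu})\Norm{g}_{L^1_\mu}$, as required. Your derivation of the second, symmetric inequality via the cruder triangle bound $\Norm{g}_{L^1_\mu}\leq\Norm{\Phi^{(1)}}_{L^1_\mu}+\Norm{\Phi^{(2)}}_{L^1_\mu}$ is exactly the paper's argument for that step.

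The difference from the paper is purely expository: the paper does not prove the first inequality at all but simply invokes \cite[Theorem 11]{Sprungk2020} and \cite[Proposition 6]{Sprungk2020}, while you supply a self-contained elementary proof. What your approach buys is transparency --- the reader sees exactly where nonnegativity of the $\Phi^{(i)}$ and the $L^1_\mu$ hypothesis enter --- at no real cost in length. What the paper's approach buys is brevity and a direct link to the sharper stability theory in \cite{Sprungk2020}, which also covers Hellinger and total-variation distances.
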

\begin{proof}
 The first inequality follows by combining \cite[Theorem 11]{Sprungk2020} and \cite[Proposition 6]{Sprungk2020}.
 The second inequality follows by combining the triangle inequality with the first inequality.
\end{proof}

We now state the main assumptions of this paper.
Given $\paramspace$ and $\statespace$, there exists a unique best parameter $\param^{\best}\in \paramspace$ and unique best model $\model^{\best}:\paramspace\to\statespace$, such that for $\state^\best=\model^{\best}(\param^\best)$ and for a chosen observation operator $\obsoperator$, the data is a realisation of the random variable
\begin{equation}
 \label{eq_true_data}
 \dataRV =\obsoperator \state^{\best}+\noise = \obsoperator \model^\best(\param^\best)+\noise,\quad \noise \sim \normaldist{0}{\Sigma_\noise}
\end{equation}
where $\Sigma_\noise$ is positive definite.
We write $\obsoperator \model^\best(\param^\best)$ to emphasise that $\obsoperator$ is linear but $\model^\best$ may be nonlinear.
The data model \eqref{eq_true_data} leads to the best misfit $\Phi^{\data,\best}$, which together with the prior $\mu_\param$ yields the best posterior $\mu^{\data,\best}_{\param}$:
\begin{equation}
\label{eq_best_misfit_and_posterior}
 \paramspace\ni \param'\mapsto \Phi^{\data,\best} (\param')\coloneqq \tfrac{1}{2}\Abs{\data-\obsoperator \model^{\best}(\param')}^{2}_{\Sigma_\noise^{-1}},\quad \mu^{\data,\best}_{\param}\coloneqq (\mu_\param)_{\Phi^{\data,\best}}.
\end{equation}
We assume that $\Phi^{\data,\best}\in L^1_{\mu_\param}$, and that $\param\sim\mu_\param$ and $\noise$ are statistically independent.
The best posterior $\mu^{\data,\best}_{\param}$ is important because it is the posterior that is consistent with the true data model \eqref{eq_true_data}.

\subsection{Bound on KL divergence for approximate posterior}
\label{ssec_KL_bound_approximate_posterior}

In general, one cannot perform inference with the best model $\model^\best$, and one uses instead an approximate model $\model$ distinct from $\model^\best$.
For example, many mathematical models of natural phenomena based on differential equations do not admit closed-form solutions, so one must perform inference using some numerical approximation $\model$ of $\model^\best$.
The `model error' of $\model$ is given by the difference
\begin{equation}
 \label{eq_model_error_definition}
 \modelerror^{\best} \coloneqq \model^{\best}-\model: \paramspace\to\statespace.
\end{equation}
Given the assumption that $\noise\sim\normaldist{0}{\Sigma_\noise}$, we obtain the approximate misfit $\Phi^{\data,\appr}$ and the approximate posterior $\mu^{\data,\appr}_{\param}$ by replacing $\model^\best$ with the approximate model $\model$ in \eqref{eq_best_misfit_and_posterior}:
\begin{equation}
\label{eq_approx_misfit_and_posterior}
\param'\mapsto \Phi^{\data,\appr} (\param')\coloneqq \tfrac{1}{2}\Abs{\data-\obsoperator \model(\param')}^{2}_{\Sigma_{\noise}^{-1}},
 \quad \mu^{\data,\appr}_{\param}\coloneqq (\mu_\param)_{\Phi^{\data,\appr}}.
\end{equation}
By comparing the definitions of $\Phi^{\data,\appr}$ and $\Phi^{\data,\best}$ in \eqref{eq_best_misfit_and_posterior} and \eqref{eq_approx_misfit_and_posterior}, we conclude that if there exists some set $S\subset \paramspace$ such that $\mu_\param(S)>0$ and $\obsoperator\modelerror^\best$ does not vanish on $S$, then the likelihood corresponding to $\Phi^{\data,\appr}$ will be misspecified, and thus using $\mu^{\data,\appr}_\param$ instead of $\mu^{\data,\best}_\param$ will lead to incorrect inference.
To make this observation more quantitative, we first bound the misfit error in \Cref{lem_L1error_best_misfit_approx_misfit}, and then bound the KL divergence $d_{\KL}(\mu^{\data,\appr}_{\param}\Vert \mu^{\data,\best}_{\param})$ in \Cref{prop_KL_divergence_best_posterior_approx_posterior}.
\begin{lemma}
\label{lem_L1error_best_misfit_approx_misfit}
If $\Phi^{\data,\appr}\in L^1_{\mu_\param}$, then 
\begin{equation}
\label{eq_L1error_best_misfit_approx_misfit}
\Norm{ \Phi^{\data,\best} - \Phi^{\data,\appr} }_{L^{1}_{\mu_{\param}}}\leq 2^{-1/2} \Norm{ \Abs{\obsoperator \modelerror^{\best}}_{\Sigma_{\noise}^{-1}}^{2}}_{L^1_{\mu_\param}}^{1/2} \left( \Norm{ \Phi^{\data,\best}}^{1/2}_{L^{1}_{\mu_\param}}+\Norm{\Phi^{\data,\appr}}^{1/2}_{L^{1}_{\mu_\param}}\right).
\end{equation}
where $\Norm{ \Abs{\obsoperator \modelerror^{\best}}_{\Sigma_{\noise}^{-1}}^{2}}_{L^{1}_{\mu_\param}}\leq 2^{1/2}(\Norm{\Phi^{\data,\best}}^{1/2}_{L^{1}_{\mu_\param}}+ \Norm{\Phi^{\data,\appr}}^{1/2}_{L^{1}_{\mu_\param}})$.
\end{lemma}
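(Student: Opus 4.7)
The plan is to prove a pointwise bound on the integrand $|\Phi^{\data,\best}(\param')-\Phi^{\data,\appr}(\param')|$ first, and then integrate using the Cauchy--Schwarz inequality. Write $L\coloneqq\Sigma_\noise^{-1}$, set $a(\param')\coloneqq \data-\obsoperator\model(\param')$ and $b(\param')\coloneqq \obsoperator\modelerror^\best(\param')$. By the definition of the model error \eqref{eq_model_error_definition}, $\data-\obsoperator\model^\best(\param')=a(\param')-b(\param')$, so
\begin{equation*}
\Phi^{\data,\best}(\param')-\Phi^{\data,\appr}(\param')=\tfrac{1}{2}\bigl(\Abs{a-b}_{L}^{2}-\Abs{a}_{L}^{2}\bigr)=\tfrac{1}{2}\bigl(\Abs{a-b}_{L}-\Abs{a}_{L}\bigr)\bigl(\Abs{a-b}_{L}+\Abs{a}_{L}\bigr),
\end{equation*}
using the difference-of-squares factorisation for the $L$-weighted norm defined in \eqref{eq_matrix_weighted_inner_product_norm}.

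The reverse triangle inequality gives $\bigl|\Abs{a-b}_{L}-\Abs{a}_{L}\bigr|\leq \Abs{b}_{L}$, while $\Abs{a-b}_L=\sqrt{2\Phi^{\data,\best}(\param')}$ and $\Abs{a}_L=\sqrt{2\Phi^{\data,\appr}(\param')}$ by \eqref{eq_best_misfit_and_posterior} and \eqref{eq_approx_misfit_and_posterior}. Combining these yields the clean pointwise bound
\begin{equation*}
\Abs{\Phi^{\data,\best}(\param')-\Phi^{\data,\appr}(\param')}\leq \tfrac{1}{\sqrt{2}}\Abs{\obsoperator\modelerror^\best(\param')}_{\Sigma_\noise^{-1}}\Bigl(\sqrt{\Phi^{\data,\best}(\param')}+\sqrt{\Phi^{\data,\appr}(\param')}\Bigr).
\end{equation*}

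Integrating against $\mu_\param$ and applying Cauchy--Schwarz to each of the two resulting summands on the right, together with the identity $\Norm{\sqrt{\Phi^{\data,\bullet}}}_{L^2_{\mu_\param}}=\Norm{\Phi^{\data,\bullet}}_{L^1_{\mu_\param}}^{1/2}$, produces the first inequality \eqref{eq_L1error_best_misfit_approx_misfit}. For the auxiliary bound, apply the forward triangle inequality $\Abs{b}_L=\Abs{a-(a-b)}_L\leq \Abs{a}_L+\Abs{a-b}_L$ pointwise, take $L^2_{\mu_\param}$-norms on both sides using Minkowski's inequality, and rewrite $\Norm{\,\Abs{\obsoperator\modelerror^\best}_{\Sigma_\noise^{-1}}\,}_{L^2_{\mu_\param}}=\Norm{\,\Abs{\obsoperator\modelerror^\best}_{\Sigma_\noise^{-1}}^{2}\,}_{L^1_{\mu_\param}}^{1/2}$; the factors of $\sqrt{2}$ arise from the identifications $\Abs{a-b}_L=\sqrt{2\Phi^{\data,\best}}$ and $\Abs{a}_L=\sqrt{2\Phi^{\data,\appr}}$ as above.

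No step is genuinely difficult: the main technical point is to recognise the difference-of-squares structure so that a linear factor $\Abs{\obsoperator\modelerror^\best}_{\Sigma_\noise^{-1}}$ can be isolated from the quadratic misfit difference, and to choose the Cauchy--Schwarz pairing that converts integrals of $\Abs{\obsoperator\modelerror^\best}_{\Sigma_\noise^{-1}}\sqrt{\Phi^{\data,\bullet}}$ into the claimed product of $L^1_{\mu_\param}$ quantities. Finiteness of $\Phi^{\data,\best},\Phi^{\data,\appr}\in L^1_{\mu_\param}$ ensures the right-hand sides are finite and justifies the applications of Cauchy--Schwarz and Minkowski.
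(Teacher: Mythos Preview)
Your proof is correct and yields the same pointwise bound and the same integrated estimate as the paper. The paper routes the argument through a general auxiliary lemma (\Cref{lem_L1norm_diff_quadratic_forms}), which uses the inner-product identity $\Abs{f}_{M_1^{-1}}^{2}-\Abs{f+g}_{M_1^{-1}}^{2}=-\ang{g,2f+g}_{M_1^{-1}}$ followed by Cauchy--Schwarz for $\ang{\cdot,\cdot}_{M_1^{-1}}$ and then the triangle inequality $\Abs{2f+g}_{M_1^{-1}}\leq \Abs{f+g}_{M_1^{-1}}+\Abs{f}_{M_1^{-1}}$; you instead use the difference-of-squares factorisation and the reverse triangle inequality. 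These two routes produce exactly the same pointwise inequality, after which the $L^2_{\mu_\param}$ Cauchy--Schwarz step is identical. The only practical difference is packaging: the paper's lemma is stated so as to also cover the case of two different weighting matrices $M_1^{-1}$ and $(M_1+M_2)^{-1}$, which is reused for the enhanced-noise misfit bounds, whereas your argument is self-contained for the equal-weight case at hand.
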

We defer the proof of \Cref{lem_L1error_best_misfit_approx_misfit} to \Cref{ssec_proofs_KL_bound_approximate_posterior}.

Combining \eqref{eq_L1error_best_misfit_approx_misfit} with the bound on $\Norm{ \Abs{\obsoperator \modelerror^{\best}}_{\Sigma_{\noise}^{-1}}^{2}}_{L^{1}_{\mu_\param}}$ implies 
 \begin{equation*}
  \Norm{ \Phi^{\data,\best} - \Phi^{\data,\appr} }_{L^{1}_{\mu_{\param}}}\leq \left(\Norm{\Phi^{\data,\best}}^{1/2}_{L^{1}_{\mu_\param}}+ \Norm{\Phi^{\data,\appr}}^{1/2}_{L^{1}_{\mu_\param}}\right)^2.
 \end{equation*}
Since the right-hand side of the inequality above is larger than $\Norm{\Phi^{\data,\best}}_{L^{1}_{\mu_\param}}+ \Norm{\Phi^{\data,\appr}}_{L^{1}_{\mu_\param}}$, it follows that the bound given in \Cref{lem_L1error_best_misfit_approx_misfit} is not optimal, because it is worse than the bound we could obtain using the triangle inequality. 
Moreover, the term inside the parentheses on the right-hand side of \eqref{eq_L1error_best_misfit_approx_misfit} cannot be evaluated if $\model^{\best}$ is unavailable.
Nevertheless, the bound \eqref{eq_L1error_best_misfit_approx_misfit} is useful, because it bounds $\Norm{ \Phi^{\data,\best} - \Phi^{\data,\appr} }_{L^{1}_{\mu_{\param}}}$ in terms of the average observed model error $\Norm{ \Abs{\obsoperator \modelerror^{\best}}_{\Sigma_{\noise}^{-1}}^{2}}_{L^1_{\mu_\param}}^{1/2}$ and quantities that are assumed to be finite.

\begin{proposition}
\label{prop_KL_divergence_best_posterior_approx_posterior}
If $\Phi^{\data,\appr}\in L^{1}_{\mu_{\param}}$, then
\begin{align*}
 \max\{ d_{\KL}(\mu^{\data,\appr}_{\param}\Vert \mu^{\data,\best}_{\param}),d_{\KL}(\mu^{\data,\best}_{\param}\Vert \mu^{\data,\appr}_{\param})\}\leq & C  \Norm{ \Abs{\obsoperator \modelerror^{\best}}_{\Sigma_{\noise}^{-1}}^{2}}_{L^1_{\mu_\param}}^{1/2}
\end{align*} 
for $C=2^{1/2} \exp\left(2\Norm{\Phi^{\data,\best}}_{L^1_{\mu_\param}}+2\Norm{\Phi^{\data,\appr}}_{L^{1}_{\mu_\param}}\right)\left( \Norm{ \Phi^{\data,\best}}^{1/2}_{L^{1}_{\mu_\param}}+\Norm{\Phi^{\data,\appr}}^{1/2}_{L^{1}_{\mu_\param}}\right)$.
\end{proposition}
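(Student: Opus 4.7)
The statement is essentially a corollary obtained by combining the two preceding results, so the plan is short and mechanical. I will verify the hypotheses of Theorem \ref{thm_theorem11_and_proposition6_Sprungk2020}, apply it to the pair of posteriors, and then substitute in the misfit-error bound of Lemma \ref{lem_L1error_best_misfit_approx_misfit}.

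\textbf{Step 1 (Setup and hypothesis check).} Note that $\mu^{\data,\best}_{\param} = (\mu_\param)_{\Phi^{\data,\best}}$ and $\mu^{\data,\appr}_{\param} = (\mu_\param)_{\Phi^{\data,\appr}}$ by definition \eqref{eq_best_misfit_and_posterior} and \eqref{eq_approx_misfit_and_posterior}. Both misfits are nonnegative since they are squared $\Sigma_\noise^{-1}$-weighted norms, so they take values in $\bb{R}_{\geq 0}$. By the standing assumption $\Phi^{\data,\best}\in L^1_{\mu_\param}$, and by the hypothesis of the proposition $\Phi^{\data,\appr}\in L^1_{\mu_\param}$. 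Hence the hypotheses of Theorem \ref{thm_theorem11_and_proposition6_Sprungk2020} are satisfied with $\Phi^{(1)}\leftarrow \Phi^{\data,\appr}$ and $\Phi^{(2)}\leftarrow \Phi^{\data,\best}$ (or vice versa).

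\textbf{Step 2 (Apply local Lipschitz stability).} The second inequality of Theorem \ref{thm_theorem11_and_proposition6_Sprungk2020} yields
\begin{equation*}
\max\{ d_{\KL}(\mu^{\data,\appr}_{\param}\Vert \mu^{\data,\best}_{\param}),d_{\KL}(\mu^{\data,\best}_{\param}\Vert \mu^{\data,\appr}_{\param})\}\leq 2\exp\!\left(2\Norm{\Phi^{\data,\best}}_{L^1_{\mu_\param}}+2\Norm{\Phi^{\data,\appr}}_{L^1_{\mu_\param}}\right)\Norm{\Phi^{\data,\best}-\Phi^{\data,\appr}}_{L^1_{\mu_\param}}.
\end{equation*}

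\textbf{Step 3 (Substitute the misfit bound).} Inserting the bound from Lemma \ref{lem_L1error_best_misfit_approx_misfit}, namely
\begin{equation*}
\Norm{\Phi^{\data,\best}-\Phi^{\data,\appr}}_{L^1_{\mu_\param}}\leq 2^{-1/2}\Norm{\Abs{\obsoperator\modelerror^\best}^2_{\Sigma_\noise^{-1}}}^{1/2}_{L^1_{\mu_\param}}\!\left(\Norm{\Phi^{\data,\best}}^{1/2}_{L^1_{\mu_\param}}+\Norm{\Phi^{\data,\appr}}^{1/2}_{L^1_{\mu_\param}}\right),
\end{equation*}
into the inequality from Step 2 and collecting the prefactor $2\cdot 2^{-1/2}=2^{1/2}$ produces exactly the claimed constant $C$ and the stated bound.

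\textbf{Main obstacle.} There is no real obstacle: both ingredients are already established. The only thing worth being careful about is checking that the nonnegativity and $L^1_{\mu_\param}$-integrability hypotheses of Theorem \ref{thm_theorem11_and_proposition6_Sprungk2020} hold so that the symmetric form of the stability estimate (the second inequality, giving the $\max$ of the two KL divergences) is applicable; this is immediate from the quadratic form of the misfits and from the standing assumption together with the hypothesis of the proposition.
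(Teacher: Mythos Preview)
Your proposal is correct and follows essentially the same approach as the paper's own proof: apply the second (symmetric) inequality of Theorem~\ref{thm_theorem11_and_proposition6_Sprungk2020} to the pair $(\Phi^{\data,\best},\Phi^{\data,\appr})$, then substitute the misfit bound from Lemma~\ref{lem_L1error_best_misfit_approx_misfit} and collect the constant. The only cosmetic difference is that you explicitly spell out the nonnegativity and $L^1_{\mu_\param}$-integrability hypothesis check, which the paper leaves implicit.
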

\begin{proof}[Proof of \Cref{prop_KL_divergence_best_posterior_approx_posterior}]
We apply \Cref{lem_L1error_best_misfit_approx_misfit} and the second statement of \Cref{thm_theorem11_and_proposition6_Sprungk2020} with $\Phi^{(1)}\leftarrow \Phi^{\data,\best}$, $\Phi^{(2)}\leftarrow \Phi^{\data,\appr}$, and $\mu\leftarrow \mu_{\param}$ to obtain
\begin{align*}
&\max\{ d_{\KL}(\mu^{\data,\appr}_{\param}\Vert \mu^{\data,\best}_{\param}),d_{\KL}(\mu^{\data,\best}_{\param}\Vert \mu^{\data,\appr}_{\param})\}
\\
\leq 
& 2\exp\left(2\Norm{\Phi^{\data,\appr}}_{L^1_{\mu_\param}}+2\Norm{\Phi^{\data,\best}}_{L^1_{\mu_\param}}\right)\Norm{\Phi^{\data,\best}-\Phi^{\data,\appr}}_{L^1_{\mu_\param}}
\\
\leq & 2^{1/2}\exp\left(2\Norm{\Phi^{\data,\appr}}_{L^1_{\mu_\param}}+2\Norm{\Phi^{\data,\best}}_{L^1_{\mu_\param}}\right)\left( \Norm{ \Phi^{\data,\best}}^{1/2}_{L^{1}_{\mu_\param}}+\Norm{\Phi^{\data,\appr}}^{1/2}_{L^{1}_{\mu_\param}}\right)\Norm{ \Abs{\obsoperator \modelerror^{\best}}_{\Sigma_{\noise}^{-1}}^{2}}_{L^1_{\mu_\param}}^{1/2}.
\end{align*}
This completes the proof of \Cref{prop_KL_divergence_best_posterior_approx_posterior}.
\end{proof}
Although $C$ in \Cref{prop_KL_divergence_best_posterior_approx_posterior} is not optimal, it is finite under the stated hypotheses.

\Cref{prop_KL_divergence_best_posterior_approx_posterior} shows that the Kullback--Leibler divergences between $\mu^{\data,\appr}_{\param}$ and $\mu^{\data,\best}_{\param}$ are controlled by the average observed model error $\Norm{ \Abs{\obsoperator \modelerror^{\best}}_{\Sigma_{\noise}^{-1}}^{2}}_{L^1_{\mu_\param}}^{1/2}$.
In order for these divergences to be small, one should choose the observation operator $\obsoperator$ such that $\modelerror^{\best}$ takes values in or near $\kernel(\obsoperator)$ with high $\mu_{\param}$-probability.
In particular, if $\mu_\param(\modelerror^{\best}\in\kernel(\obsoperator))=1$, then using $\mu^{\data,\appr}_{\param}$ will yield the same inference as $\mu^{\data,\best}_{\param}$. 
This is an example of a positive criterion for choosing observation operators to mitigate the effect of model error on Bayesian inference.

The condition $\mu_\param(\obsoperator \modelerror^{\best}=0)=1$ can be useful for guiding the choice of observation operator $\obsoperator$ even if $\modelerror^{\best}$ is not fully known.
For example, if one can determine a priori that $\modelerror^{\best}$ takes values in some proper subspace $V$ of $\statespace$, then any choice of observation operator $\obsoperator$ such that $V\subseteq \kernel(\obsoperator)$ will imply that $\mu^{\data,\appr}_{\param}=\mu^{\data,\best}_{\param}$.
However, the example $\obsoperator\equiv 0$ shows that one can choose $\obsoperator$ so that $\kernel(\obsoperator)$ is too large, in the sense that observations of state yield little or no information about the state itself.
Thus, the approach of choosing $\obsoperator$ to mitigate model error involves the following tradeoff: $\kernel(\obsoperator)$ should be as small as possible, but large enough to ensure that the model error takes values in $\kernel(\obsoperator$) $\mu_\param$-almost surely.
In \Cref{sec_example}, we revisit this observation on a specific example.

The preceding discussion emphasises the main idea of this work: to mitigate the effect of model error on Bayesian inference, one should exploit all available knowledge about the model error and about the inverse problem in order to guide the choice of observations.
This main idea is valid, independently of the fact that the bounds in \Cref{lem_L1error_best_misfit_approx_misfit} and \Cref{prop_KL_divergence_best_posterior_approx_posterior} are not sharp.

\subsection{Bounds on KL divergence for enhanced noise posterior}
\label{ssec_KL_bound_enhanced_noise}

Recall that the definition \eqref{eq_best_misfit_and_posterior} of the best misfit $\Phi^{\data,\best}$ follows from the best model \eqref{eq_true_data} for the data random variable $\dataRV$.
The approximate misfit in \eqref{eq_approx_misfit_and_posterior} can be seen as a misfit that results from assuming that $\obsoperator \modelerror^{\best}(\param^\best)=0$.
In the enhanced noise approach, one models the unknown state error $\modelerror^{\best}(\param^{\best})$ as a random variable $\enhancednoisemodel \sim \normaldist{m_\enhancednoisemodel}{\Sigma_\enhancednoisemodel}$ that is assumed to be statistically independent of $\param\sim\mu_\param$ and $\noise\sim\normaldist{0}{\Sigma_\noise}$ \cite{Kaipio2006}.
This is related to the so-called `pre-marginalisation' approach; see e.g. \cite{Kolehmainen2011}. 
We do not assume that $\text{supp}(\normaldist{m_\enhancednoisemodel}{\Sigma_\enhancednoisemodel})=\statespace$.
The corresponding enhanced noise data model is
\begin{equation*}
 \dataRV=\obsoperator \model(\param^{\best})+\obsoperator \enhancednoisemodel+\noise, 
\end{equation*}
where the `enhanced noise' $\obsoperator \enhancednoisemodel+\noise$ has the law $\normaldist{\obsoperator m_{\enhancednoisemodel}}{\Sigma_\noise+\obsoperator \Sigma_{\enhancednoisemodel}\obsoperator^{\ast}}$ because $\obsoperator$ is linear.
Since we assume that $\Sigma_\noise$ is positive definite and since $\obsoperator \Sigma_{\enhancednoisemodel} \obsoperator^{\ast}$ is nonnegative definite, $\enhancednoisecovariance$ is positive definite, hence invertible.

The enhanced noise misfit and enhanced noise posterior are
\begin{equation}
\label{eq_enhanced_noise_misfit_and_posterior}
\param'\mapsto \Phi^{\data,\enh} (\param')\coloneqq \tfrac{1}{2}\Abs{\data-\obsoperator \model(\param')-\obsoperator m_{\enhancednoisemodel}}^{2}_{(\enhancednoisecovariance)^{-1}},\quad \mu^{\data,\enh}_{\param}\coloneqq (\mu_\param)_{\Phi^{\data,\enh}}.
\end{equation}
\Cref{lem_L1error_best_misfit_enhanced_noise_misfit} below bounds the error between $\Phi^{\data,\best}$ and $\Phi^{\data,\enh}$ in terms of the shifted observed model error term $\obsoperator (\modelerror^{\best}-m_{\enhancednoisemodel})$ and the difference of covariance matrices $\Sigma_\noise^{-1}-(\enhancednoisecovariance)^{-1}$.
Define the scalar
\begin{equation}
 \label{eq_equivalence_of_norms_enhanced_noise}
 C_{\enh}\coloneqq \Norm{\Sigma_\noise^{-1/2}(\enhancednoisecovariance)^{1/2}}_{\op}.
\end{equation}
By \Cref{lem_linear_algebra}, $C_{\enh}$ satisfies $\Abs{z}_{\Sigma_\noise^{-1}}\leq C_{\enh}\Abs{z}_{(\Sigma_\noise+\obsoperator\Sigma_{\enhancednoisemodel} \obsoperator^{\ast})^{-1}}$ for all $z \in \bb{R}^{n}$.
\begin{lemma}
\label{lem_L1error_best_misfit_enhanced_noise_misfit}
If $\Phi^{\data,\enh}\in L^1_{\mu_\param}$, then 
\begin{align}
\Norm{ \Phi^{\data,\best} - \Phi^{\data,\enh} }_{L^{1}_{\mu_{\param}}}\leq &  2^{-1/2} \Norm{ \Abs{ \obsoperator (\modelerror^{\best}-m_{\enhancednoisemodel})}_{\Sigma_{\noise}^{-1}}^{2}}_{L^1_{\mu_\param}}^{1/2} \left( \Norm{\Phi^{\data,\best}}^{1/2}_{L^1_{\mu_\param}}+ C_{\enh}\Norm{\Phi^{\data,\enh}}_{L^1_{\mu_\param}}^{1/2}\right)
\label{eq_L1error_best_misfit_enhanced_noise_misfit}
\\
&+2^{-1}\Norm{ \Abs{\data   -\obsoperator \model-\obsoperator m_{\enhancednoisemodel}}^{2}_{ \Sigma_\noise^{-1}-(\enhancednoisecovariance)^{-1}}}_{L^1_{\mu_\param}}.
\nonumber
\end{align}
Furthermore,
\begin{align*}
 \Norm{ \Abs{ \obsoperator (\modelerror^{\best} - m_{\enhancednoisemodel})}_{\Sigma_{\noise}^{-1}}^{2}}_{L^1_{\mu_\param}}^{1/2}
\leq & 2^{1/2}\left(\Norm{\Phi^{\data,\best}}_{L^1_{\mu_\param}}^{1/2}+C_{\enh}\Norm{\Phi^{\data,\enh}}_{L^1_{\mu_\param}}^{1/2}\right),
\\
\Norm{ \Abs{\data   -\obsoperator \model-\obsoperator m_{\enhancednoisemodel}}^{2}_{ \Sigma_\noise^{-1}-(\enhancednoisecovariance)^{-1}}}_{L^1_{\mu_\param}} \leq & (C_{\enh}+1)\Norm{2\Phi^{\data,\enh}}_{L^1_{\mu_\param}}.
\end{align*}
\end{lemma}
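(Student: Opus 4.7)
The plan is to expand both misfits, introduce the shorthand $A \coloneqq \Sigma_\noise^{-1}$ and $B \coloneqq (\enhancednoisecovariance)^{-1}$, and set $v(\param') \coloneqq \data - \obsoperator \model(\param') - \obsoperator m_{\enhancednoisemodel}$. Since $\model^\best = \model + \modelerror^\best$, one obtains the key algebraic identity $\data - \obsoperator \model^\best(\param') = v(\param') - \obsoperator(\modelerror^\best(\param') - m_{\enhancednoisemodel})$. This lets me write $2\Phi^{\data,\best} = \Abs{v - \obsoperator(\modelerror^\best - m_{\enhancednoisemodel})}_A^2$ and $2\Phi^{\data,\enh} = \Abs{v}_B^2$, which puts the two misfits into a shape where the differences in mean shift and in weighting matrix can be separated.

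The first inequality will be obtained by the telescoping decomposition
\begin{equation*}
 2(\Phi^{\data,\best} - \Phi^{\data,\enh}) = \bigl(\Abs{v - \obsoperator(\modelerror^\best - m_{\enhancednoisemodel})}_A^2 - \Abs{v}_A^2\bigr) + \bigl(\Abs{v}_A^2 - \Abs{v}_B^2\bigr).
\end{equation*}
The first bracket I handle via the polarization-style identity $\Abs{a}_L^2 - \Abs{b}_L^2 = \ang{a-b, a+b}_L$, followed by a Cauchy--Schwarz estimate $|\ang{a-b,a+b}_L| \le |a-b|_L(|a|_L+|b|_L)$. Substituting $a = v - \obsoperator(\modelerror^\best - m_{\enhancednoisemodel})$ and $b = v$, and using $\Abs{v}_A \leq C_\enh \Abs{v}_B$ from \eqref{eq_equivalence_of_norms_enhanced_noise} together with $\Abs{a}_A = \sqrt{2\Phi^{\data,\best}}$, yields a pointwise bound in terms of $\Abs{\obsoperator(\modelerror^\best - m_{\enhancednoisemodel})}_A$ times $\sqrt{2\Phi^{\data,\best}} + C_\enh\sqrt{2\Phi^{\data,\enh}}$. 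Integrating against $\mu_\param$ and applying Cauchy--Schwarz in $L^2_{\mu_\param}$ produces the first summand of \eqref{eq_L1error_best_misfit_enhanced_noise_misfit} with the correct prefactor $2^{-1/2}$. The second bracket is exactly $\Abs{v}_{A-B}^2$ by definition, contributing the second summand directly (this is where \Cref{lem_L1norm_diff_quadratic_forms} would be invoked to justify rewriting $\Abs{v}_A^2 - \Abs{v}_B^2 = \Abs{v}_{A-B}^2$, noting that $A - B \succeq 0$ since $\obsoperator \Sigma_{\enhancednoisemodel}\obsoperator^\ast \succeq 0$).

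For the second bound, the triangle inequality in $(\bb{R}^n, \Abs{\cdot}_A)$ applied to $\obsoperator(\modelerror^\best - m_{\enhancednoisemodel}) = v - (\data - \obsoperator\model^\best)$ gives pointwise $\Abs{\obsoperator(\modelerror^\best - m_{\enhancednoisemodel})}_A \leq \Abs{\data - \obsoperator\model^\best}_A + \Abs{v}_A \leq \sqrt{2\Phi^{\data,\best}} + C_\enh \sqrt{2\Phi^{\data,\enh}}$. Squaring, integrating, and applying Minkowski's inequality in $L^2_{\mu_\param}$ gives the stated $2^{1/2}(\ldots)$ bound. For the third bound, since $A - B \succeq 0$ and $\Abs{v}_A \leq C_\enh \Abs{v}_B$, one has pointwise $\Abs{v}_{A-B}^2 = \Abs{v}_A^2 - \Abs{v}_B^2 \leq (C_\enh+1)\Abs{v}_B^2$ provided $C_\enh \leq 2$; more generally the factoring $\Abs{v}_A^2 - \Abs{v}_B^2 = (\Abs{v}_A - \Abs{v}_B)(\Abs{v}_A + \Abs{v}_B)$ combined with the operator-norm comparison yields a bound of the claimed form, and integrating against $\mu_\param$ against $\Abs{v}_B^2 = 2\Phi^{\data,\enh}$ finishes it.

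The main obstacle I anticipate is the careful bookkeeping in the first bound: one must handle the simultaneous change of centring (from $m_{\enhancednoisemodel}$ to $\modelerror^\best$) and change of weighting matrix (from $A$ to $B$) while keeping the $L^2_{\mu_\param}$/Cauchy--Schwarz step producing exactly the prefactor $2^{-1/2}$ and the asymmetric combination $\Norm{\Phi^{\data,\best}}^{1/2}_{L^1_{\mu_\param}} + C_\enh \Norm{\Phi^{\data,\enh}}^{1/2}_{L^1_{\mu_\param}}$. The asymmetry is what forces the use of $C_\enh$ to convert one of the two boundary terms from the $\Sigma_\noise^{-1}$-norm to the $(\enhancednoisecovariance)^{-1}$-norm, and this is the subtle ingredient that distinguishes this estimate from the purely approximate case treated in \Cref{lem_L1error_best_misfit_approx_misfit}.
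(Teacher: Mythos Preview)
Your approach is essentially identical to the paper's: the paper packages the telescoping decomposition
\[
\Abs{f}_{M_1^{-1}}^{2}-\Abs{f+g}_{(M_1+M_2)^{-1}}^{2}
=\bigl(\Abs{f}_{M_1^{-1}}^{2}-\Abs{f+g}_{M_1^{-1}}^{2}\bigr)+\Abs{f+g}_{M_1^{-1}-(M_1+M_2)^{-1}}^{2}
\]
together with the Cauchy--Schwarz/Minkowski estimates into a standalone \Cref{lem_L1norm_diff_quadratic_forms}, then instantiates it with $f=\data-\obsoperator\model^{\best}$, $g=\obsoperator(\modelerror^{\best}-m_{\enhancednoisemodel})$, $M_1=\Sigma_\noise$, $M_2=\obsoperator\Sigma_{\enhancednoisemodel}\obsoperator^{\ast}$. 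You carry out exactly the same computation inline with the relabelling $v=f+g$, $A=M_1^{-1}$, $B=(M_1+M_2)^{-1}$. The first and second bounds are argued correctly.

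The one place to tighten is your third bound. The factoring $(\Abs{v}_A-\Abs{v}_B)(\Abs{v}_A+\Abs{v}_B)$ combined with $\Abs{v}_A\le C_{\enh}\Abs{v}_B$ gives $(C_{\enh}-1)(C_{\enh}+1)\Abs{v}_B^{2}=(C_{\enh}^{2}-1)\Abs{v}_B^{2}$, not $(C_{\enh}+1)\Abs{v}_B^{2}$; your hedge ``provided $C_{\enh}\le 2$'' is exactly the condition under which $(C_{\enh}^{2}-1)\le C_{\enh}+1$. The paper's route is the crude triangle inequality $\Norm{\Abs{v}_A^{2}-\Abs{v}_B^{2}}_{L^1_{\mu_\param}}\le \Norm{\Abs{v}_A^{2}}_{L^1_{\mu_\param}}+\Norm{\Abs{v}_B^{2}}_{L^1_{\mu_\param}}$ followed by $\Abs{v}_A^{2}\le C_{\enh}^{2}\Abs{v}_B^{2}$, which in fact delivers $(C_{\enh}^{2}+1)$ rather than the stated $(C_{\enh}+1)$; the printed constant appears to be an oversight. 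Your pointwise factoring is therefore sharper than the paper's argument, and since this inequality is only used to certify finiteness (not quantitatively), any of the constants $(C_{\enh}^{2}-1)$, $(C_{\enh}+1)$, or $(C_{\enh}^{2}+1)$ serves equally well for the downstream propositions.
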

We defer the proof of \Cref{lem_L1error_best_misfit_enhanced_noise_misfit} to \Cref{ssec_proofs_KL_bound_enhanced_noise_posterior}.
\begin{proposition}
\label{prop_KL_divergence_best_posterior_enhanced_noise_posterior}
If $\Phi^{\data,\enh}\in L^{1}_{\mu_{\param}}$, then
\begin{align*}
 &\max\{d_{\KL}(\mu^{\data,\best}_{\param}\Vert \mu^{\data,\enh}_{\param}),d_{\KL}(\mu^{\data,\enh}_{\param}\Vert \mu^{\data,\best}_{\param})\}
 \\
 \leq & C\left( \Norm{ \Abs{ \obsoperator (\modelerror^{\best}- m_{\enhancednoisemodel})}_{\Sigma_{\noise}^{-1}}^{2}}_{L^1_{\mu_\param}}^{1/2} +\Norm{ \Abs{\data   -\obsoperator \model-\obsoperator m_{\enhancednoisemodel}}^{2}_{ \Sigma_\noise^{-1}-(\enhancednoisecovariance)^{-1}}}_{L^1_{\mu_\param}}\right)
\end{align*} 
for $C= \exp\left(2\Norm{\Phi^{\data,\best}}_{L^1_{\mu_\param}}+2\Norm{\Phi^{\data,\enh}}_{L^1_{\mu_\param}}\right)\max\left\{\sqrt{2}\left( \Norm{\Phi^{\data,\best}}^{1/2}_{L^1_{\mu_\param}}+ C_{\enh}\Norm{\Phi^{\data,\enh}}_{L^1_{\mu_\param}}^{1/2}\right),1\right\}$.
\end{proposition}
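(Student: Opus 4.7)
The plan is to mirror the two-step argument used in the proof of \Cref{prop_KL_divergence_best_posterior_approx_posterior}: first invoke the symmetric form of \Cref{thm_theorem11_and_proposition6_Sprungk2020} to pass from an $L^1_{\mu_\param}$ misfit difference to the KL bound, then substitute the estimate of \Cref{lem_L1error_best_misfit_enhanced_noise_misfit}, and finally collect constants into the single factor $C$ that multiplies both summands.

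Concretely, I first apply the second statement of \Cref{thm_theorem11_and_proposition6_Sprungk2020} with $\Phi^{(1)}\leftarrow \Phi^{\data,\best}$, $\Phi^{(2)}\leftarrow \Phi^{\data,\enh}$, and $\mu\leftarrow\mu_\param$. Both misfits are nonnegative, $\Phi^{\data,\best}\in L^1_{\mu_\param}$ by the standing assumption following \eqref{eq_best_misfit_and_posterior}, and $\Phi^{\data,\enh}\in L^1_{\mu_\param}$ by hypothesis, so the integrability conditions are met, yielding
\begin{equation*}
\max\{d_{\KL}(\mu^{\data,\best}_\param\Vert\mu^{\data,\enh}_\param),d_{\KL}(\mu^{\data,\enh}_\param\Vert\mu^{\data,\best}_\param)\}\leq 2\exp\bigl(2\Norm{\Phi^{\data,\best}}_{L^1_{\mu_\param}}+2\Norm{\Phi^{\data,\enh}}_{L^1_{\mu_\param}}\bigr)\Norm{\Phi^{\data,\best}-\Phi^{\data,\enh}}_{L^1_{\mu_\param}}.
\end{equation*}
Into the last factor I then substitute \eqref{eq_L1error_best_misfit_enhanced_noise_misfit}. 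Unlike in \Cref{prop_KL_divergence_best_posterior_approx_posterior}, this produces a sum of two distinct terms. The factor $2$ from \Cref{thm_theorem11_and_proposition6_Sprungk2020} cancels the $2^{-1/2}$ appearing in the shifted-observed-model-error term and the $2^{-1}$ appearing in the covariance-difference term, leaving prefactors $\sqrt{2}\exp(\cdots)(\Norm{\Phi^{\data,\best}}^{1/2}_{L^1_{\mu_\param}}+C_{\enh}\Norm{\Phi^{\data,\enh}}^{1/2}_{L^1_{\mu_\param}})$ and $\exp(\cdots)$, respectively, where $\exp(\cdots)$ abbreviates $\exp(2\Norm{\Phi^{\data,\best}}_{L^1_{\mu_\param}}+2\Norm{\Phi^{\data,\enh}}_{L^1_{\mu_\param}})$.

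To arrive at the symmetric form stated in the proposition, I upper bound both prefactors by the common constant $C$. The $\max$ appearing in the definition of $C$ is chosen precisely so that $C$ dominates the first prefactor (via the $\sqrt{2}$ branch) and the second prefactor (via the $1$ branch); adding the two bounded terms then produces the claim. I do not anticipate any real obstacle here: the analytical content is already in \Cref{lem_L1error_best_misfit_enhanced_noise_misfit} and \Cref{thm_theorem11_and_proposition6_Sprungk2020}, and what remains is purely arithmetic bookkeeping to collect all constants into a single uniform $C$ that multiplies the sum of the two data-driven norms.
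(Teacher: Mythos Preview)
Your proposal is correct and follows essentially the same argument as the paper's proof: apply the second statement of \Cref{thm_theorem11_and_proposition6_Sprungk2020} with $\Phi^{(1)}\leftarrow\Phi^{\data,\best}$, $\Phi^{(2)}\leftarrow\Phi^{\data,\enh}$, insert the bound \eqref{eq_L1error_best_misfit_enhanced_noise_misfit} from \Cref{lem_L1error_best_misfit_enhanced_noise_misfit}, and absorb the resulting prefactors $\sqrt{2}\exp(\cdots)\bigl(\Norm{\Phi^{\data,\best}}^{1/2}_{L^1_{\mu_\param}}+C_{\enh}\Norm{\Phi^{\data,\enh}}^{1/2}_{L^1_{\mu_\param}}\bigr)$ and $\exp(\cdots)$ into the single constant $C$ via the $\max$. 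Your bookkeeping of the factors of $2$, $2^{-1/2}$, and $2^{-1}$ is accurate.
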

\begin{proof}[Proof of \Cref{prop_KL_divergence_best_posterior_enhanced_noise_posterior}]
Given that $\Phi^{\data,\enh},\Phi^{\data,\best}\in L^1_{\mu_\param}$, we may apply \Cref{thm_theorem11_and_proposition6_Sprungk2020} with $\Phi^{(1)}\leftarrow \Phi^{\data,\best}$, $\Phi^{(2)}\leftarrow \Phi^{\data,\enh}$, and $\mu\leftarrow \mu_{\param}$, to obtain
\begin{align*}
  &\max\{d_{\KL}(\mu^{\data,\best}_{\param}\Vert \mu^{\data,\enh}_{\param}), d_{\KL}(\mu^{\data,\enh}_{\param} \Vert \mu^{\data,\best}_{\param})\}
  \\
  \leq & 2\exp\left(2\Norm{\Phi^{\data,\enh}}_{L^1_{\mu_\param}}+2\Norm{\Phi^{\data,\best}}_{L^1_{\mu_\param}}\right)\Norm{\Phi^{\data,\best}-\Phi^{\data,\enh}}_{L^1_{\mu_\param}}
  \\
  \leq & \exp\left(2\Norm{\Phi^{\data,\enh}}_{L^1_{\mu_\param}}+2\Norm{\Phi^{\data,\best}}_{L^1_{\mu_\param}}\right)\max\left\{ \sqrt{2} \left( \Norm{\Phi^{\data,\best}}^{1/2}_{L^1_{\mu_\param}}+ C_{\enh}\Norm{\Phi^{\data,\enh}}_{L^1_{\mu_\param}}^{1/2}\right),1\right\}
\\
& \times \left(\Norm{ \Abs{ \obsoperator (\modelerror^{\best}-m_{\enhancednoisemodel})}_{\Sigma_{\noise}^{-1}}^{2}}_{L^1_{\mu_\param}}^{1/2} +\Norm{ \Abs{\data   -\obsoperator \model-\obsoperator m_{\enhancednoisemodel}}^{2}_{ \Sigma_\noise^{-1}-(\enhancednoisecovariance)^{-1}}}_{L^1_{\mu_\param}}\right)
\end{align*}
where the second inequality follows from the bound \eqref{eq_L1error_best_misfit_enhanced_noise_misfit} of \Cref{lem_L1error_best_misfit_enhanced_noise_misfit}.
\end{proof}

The significance of \Cref{prop_KL_divergence_best_posterior_enhanced_noise_posterior} is similar to that of \Cref{prop_KL_divergence_best_posterior_approx_posterior}.
The main difference consists in the fact that the error with respect to the best misfit is now controlled by
\begin{equation}
\label{eq_enhanced_noise_sum_to_vanish}
\Norm{ \Abs{ \obsoperator (\modelerror^{\best}-m_{\enhancednoisemodel})}_{\Sigma_{\noise}^{-1}}^{2}}_{L^1_{\mu_\param}}^{1/2}+\Norm{ \Abs{\data   -\obsoperator \model-\obsoperator m_{\enhancednoisemodel}}^{2}_{ \Sigma_\noise^{-1}-(\enhancednoisecovariance)^{-1}}}_{L^1_{\mu_\param}},
\end{equation}
where only the first term depends on the model error $\modelerror^\best$.
Since we assume that $\Sigma_{\noise}$ is positive definite, the first term in \eqref{eq_enhanced_noise_sum_to_vanish} vanishes if and only if $\modelerror^{\best}-m_{\enhancednoisemodel} \in \kernel(\obsoperator)$ $\mu_\param$-almost surely.
This condition differs from the sufficient condition for $\mu^{\data,\best}_{\param}=\mu^{\data,\appr}_{\param}$ that was implied by \Cref{lem_L1error_best_misfit_approx_misfit}, namely, that $\modelerror^{\best}\in \kernel(\obsoperator)$ $\mu_\param$-almost surely.
By \eqref{eq_matrix_weighted_inner_product_norm}, the second term in \eqref{eq_enhanced_noise_sum_to_vanish} vanishes if and only if
\begin{equation}
\label{eq_enhanced_noise_second_term_sum_to_vanish_equivalent_condition}
\mu_\param\left(\data   -\obsoperator \model-\obsoperator m_{\enhancednoisemodel}\in \kernel\left(\Sigma_\noise^{-1}-(\enhancednoisecovariance)^{-1}\right)\right) =1.
\end{equation}
If $\bb{P}(\enhancednoisemodel\in m_{\enhancednoisemodel}+\kernel(\obsoperator))=1$, then $\obsoperator \enhancednoisemodel=\obsoperator m_\enhancednoisemodel$ almost surely, which implies that $\obsoperator \Sigma_\enhancednoisemodel\obsoperator^\ast=0$.
This in turn implies that \eqref{eq_enhanced_noise_second_term_sum_to_vanish_equivalent_condition} holds.
Thus, if $\mu_\param(\modelerror^{\best}\in m_{\enhancednoisemodel}+ \kernel(\obsoperator))=1$ and $\bb{P}(\enhancednoisemodel\in m_{\enhancednoisemodel}+\kernel(\obsoperator))=1$, then $\mu^{\data,\best}_{\param}=\mu^{\data,\enh}_{\param}$.
These sufficient conditions do not assume that $m_{\enhancednoisemodel}\in\kernel(\obsoperator)$.
In fact, if in addition to $\obsoperator \Sigma_{\enhancednoisemodel}\obsoperator^{\ast}=0$ it also holds that $m_{\enhancednoisemodel}\in\kernel(\obsoperator)$, then it follows from \eqref{eq_enhanced_noise_misfit_and_posterior} that $\Phi^{\data,\enh}=\Phi^{\data,\appr}$.
The next lemma provides a more quantitative version of this statement, using the constant $C_{\enh}$ given in \eqref{eq_equivalence_of_norms_enhanced_noise}.
\begin{lemma}
\label{lem_L1norm_diff_Phi_approx_minus_Phi_enhanced}
If $\Phi^{\data,\appr},\Phi^{\data,\enh}\in L^1_{\mu_\param}$, then 
\begin{align}
 \Norm{ \Phi^{\data,\appr} - \Phi^{\data,\enh} }_{L^{1}_{\mu_{\param}}}\leq & 2^{-1/2}\Abs{ \obsoperator m_{\enhancednoisemodel}}_{\Sigma_{\noise}^{-1}}\left( \Norm{\Phi^{\data,\appr}}^{1/2}_{L^1_{\mu_\param}}+C_{\enh} \Norm{ \Phi^{\data,\enh}}^{1/2}_{L^1_{\mu_\param}}\right) 
\label{eq_L1norm_diff_Phi_approx_minus_Phi_enhanced}
\\
&+2^{-1}\Norm{ \Abs{\data   -\obsoperator \model -\obsoperator m_{\enhancednoisemodel}}^{2}_{ \Sigma_\noise^{-1}-(\enhancednoisecovariance)^{-1}}}_{L^1_{\mu_\param}}.
\nonumber
\end{align}
\end{lemma}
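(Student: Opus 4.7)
The plan is to reduce this bound to the same template used in the proof of \Cref{lem_L1error_best_misfit_enhanced_noise_misfit}, by inserting a carefully chosen intermediate quadratic form that separates the change in \emph{argument} from the change in \emph{weighting matrix}. Writing $v\coloneqq \data-\obsoperator\model(\param')$ and $w\coloneqq v-\obsoperator m_\enhancednoisemodel$, so that $\Phi^{\data,\appr}=\tfrac12\Abs{v}^{2}_{\Sigma_\noise^{-1}}$ and $\Phi^{\data,\enh}=\tfrac12\Abs{w}^{2}_{(\enhancednoisecovariance)^{-1}}$, I would start from the decomposition
\begin{equation*}
\Phi^{\data,\appr}-\Phi^{\data,\enh}=\tfrac12\bigl(\Abs{v}^2_{\Sigma_\noise^{-1}}-\Abs{w}^2_{\Sigma_\noise^{-1}}\bigr)+\tfrac12\bigl(\Abs{w}^2_{\Sigma_\noise^{-1}}-\Abs{w}^2_{(\enhancednoisecovariance)^{-1}}\bigr),
\end{equation*}
apply the triangle inequality in $L^1_{\mu_\param}$, and handle the two brackets separately. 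This is precisely the situation packaged by \Cref{lem_L1norm_diff_quadratic_forms}, so the work is to check that its hypotheses are met with the right choice of vectors and matrices.

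For the first bracket, I would use the polarization identity $\tfrac12(\Abs{v}^2_{\Sigma_\noise^{-1}}-\Abs{w}^2_{\Sigma_\noise^{-1}})=\tfrac12\Ang{v-w,v+w}_{\Sigma_\noise^{-1}}$. The key observation is that $v-w=\obsoperator m_\enhancednoisemodel$ is a \emph{constant} (independent of $\param'$), so Cauchy--Schwarz pulls $\Abs{\obsoperator m_\enhancednoisemodel}_{\Sigma_\noise^{-1}}$ outside of the integral as a deterministic prefactor rather than as an $L^1_{\mu_\param}$ norm (this is what replaces the factor $\Norm{\Abs{\obsoperator(\modelerror^\best-m_\enhancednoisemodel)}^2_{\Sigma_\noise^{-1}}}^{1/2}_{L^1_{\mu_\param}}$ appearing in \eqref{eq_L1error_best_misfit_enhanced_noise_misfit}). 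The remaining factor $\tfrac12\Abs{v+w}_{\Sigma_\noise^{-1}}$ I would bound by $\tfrac12(\Abs{v}_{\Sigma_\noise^{-1}}+\Abs{w}_{\Sigma_\noise^{-1}})$, then use $\Abs{v}_{\Sigma_\noise^{-1}}=\sqrt{2\Phi^{\data,\appr}}$ together with the norm-equivalence constant $C_\enh$ from \eqref{eq_equivalence_of_norms_enhanced_noise} to write $\Abs{w}_{\Sigma_\noise^{-1}}\le C_\enh\Abs{w}_{(\enhancednoisecovariance)^{-1}}=C_\enh\sqrt{2\Phi^{\data,\enh}}$. Integrating and applying Jensen's inequality to the concave map $t\mapsto\sqrt{t}$ to convert $\int\sqrt{\Phi^{\data,\bullet}}\,\rd\mu_\param$ into $\Norm{\Phi^{\data,\bullet}}_{L^1_{\mu_\param}}^{1/2}$ produces the first summand in \eqref{eq_L1norm_diff_Phi_approx_minus_Phi_enhanced} with the correct $2^{-1/2}$ prefactor.

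For the second bracket, I would note that $\obsoperator\Sigma_\enhancednoisemodel\obsoperator^\ast\succeq 0$ implies $\enhancednoisecovariance\succeq\Sigma_\noise$ in the Loewner order, hence $\Sigma_\noise^{-1}-(\enhancednoisecovariance)^{-1}$ is positive semidefinite. Consequently
\begin{equation*}
\tfrac12\bigl(\Abs{w}^2_{\Sigma_\noise^{-1}}-\Abs{w}^2_{(\enhancednoisecovariance)^{-1}}\bigr)=\tfrac12\Abs{w}^2_{\Sigma_\noise^{-1}-(\enhancednoisecovariance)^{-1}}\ge 0,
\end{equation*}
so taking absolute values is trivial and integrating against $\mu_\param$ recovers the second summand of \eqref{eq_L1norm_diff_Phi_approx_minus_Phi_enhanced} verbatim. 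The two estimates combine via the triangle inequality in $L^1_{\mu_\param}$.

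I do not expect a real obstacle here: the whole argument is structurally identical to the proof of \Cref{lem_L1error_best_misfit_enhanced_noise_misfit}, with the substitution $\data-\obsoperator\model^\best\leadsto\data-\obsoperator\model$ (equivalently, $\modelerror^\best\leadsto 0$), and the only point requiring care is that because $\obsoperator m_\enhancednoisemodel$ is $\param'$-independent, one keeps the deterministic norm $\Abs{\obsoperator m_\enhancednoisemodel}_{\Sigma_\noise^{-1}}$ throughout rather than an $L^1_{\mu_\param}$ norm of a $\param'$-dependent quantity. The constants $2^{-1/2}$ and $2^{-1}$ in the final bound track cleanly from the polarization identity and the halving in $\tfrac12(\Abs{w}^2_{\Sigma_\noise^{-1}}-\Abs{w}^2_{(\enhancednoisecovariance)^{-1}})$ respectively.
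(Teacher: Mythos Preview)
Your proposal is correct and takes essentially the same approach as the paper: the paper's proof simply applies \Cref{lem_L1norm_diff_quadratic_forms} (specifically \eqref{eq_lem_L1norm_diff_quadratic_forms_different_weighting_matrix}) with $f\leftarrow\data-\obsoperator\model$, $g\leftarrow-\obsoperator m_\enhancednoisemodel$, $M_1\leftarrow\Sigma_\noise$, $M_2\leftarrow\obsoperator\Sigma_\enhancednoisemodel\obsoperator^\ast$, which is exactly the decomposition you describe, and the constancy of $g$ makes $\Norm{\Abs{g}^2_{\Sigma_\noise^{-1}}}_{L^1_{\mu_\param}}^{1/2}=\Abs{\obsoperator m_\enhancednoisemodel}_{\Sigma_\noise^{-1}}$. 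The only cosmetic difference is that you pass from $\int\sqrt{\Phi^{\data,\bullet}}\,\rd\mu_\param$ to $\Norm{\Phi^{\data,\bullet}}_{L^1_{\mu_\param}}^{1/2}$ via Jensen, whereas the paper's \Cref{lem_L1norm_diff_quadratic_forms} does so via the $L^2_{\mu_\param}$ Cauchy--Schwarz and triangle inequalities; both are valid and yield the same constants.
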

For the proof of \Cref{lem_L1norm_diff_Phi_approx_minus_Phi_enhanced}, see \Cref{ssec_proofs_KL_bound_enhanced_noise_posterior}.

\begin{proposition}
\label{prop_KL_divergence_approx_posterior_enhanced_noise_posterior}
If $\Phi^{\data,\appr}, \Phi^{\data,\enh}\in L^{1}_{\mu_{\param}}$, then
\begin{align*}
&\max\{d_{\KL}(\mu^{\data,\appr}_{\param}\Vert \mu^{\data,\enh}_{\param}),d_{\KL}(\mu^{\data,\enh}_{\param}\Vert \mu^{\data,\appr}_{\param})\}
 \\
 \leq & C\left(  \Abs{ \obsoperator m_{\enhancednoisemodel}}_{\Sigma_{\noise}^{-1}} +\Norm{ \Abs{\data   -\obsoperator \model-\obsoperator m_{\enhancednoisemodel}}^{2}_{ \Sigma_\noise^{-1}-(\enhancednoisecovariance)^{-1}}}_{L^1_{\mu_\param}}\right)
\end{align*} 
for $C= \exp\left(2\Norm{\Phi^{\data,\appr}}_{L^1_{\mu_\param}}+2\Norm{\Phi^{\data,\enh}}_{L^1_{\mu_\param}}\right)\max\left\{\sqrt{2}\left( \Norm{\Phi^{\data,\appr}}^{1/2}_{L^1_{\mu_\param}}+ C_{\enh}\Norm{\Phi^{\data,\enh}}_{L^1_{\mu_\param}}^{1/2}\right),1\right\}$.
\end{proposition}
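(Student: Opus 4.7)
The proof will follow the same template used for Propositions 2.3 and 2.5: apply the abstract local Lipschitz stability bound of Theorem 2.1 to the pair of misfits, then substitute the $L^1_{\mu_\param}$ estimate from the preceding lemma (here Lemma 2.6).

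Concretely, the plan is to invoke the second statement of \Cref{thm_theorem11_and_proposition6_Sprungk2020} with the substitutions $\Phi^{(1)}\leftarrow \Phi^{\data,\appr}$, $\Phi^{(2)}\leftarrow \Phi^{\data,\enh}$, and $\mu\leftarrow \mu_{\param}$. Both misfits are in $L^1_{\mu_\param}$ and nonnegative by hypothesis, so this yields
\begin{equation*}
\max\{d_{\KL}(\mu^{\data,\appr}_{\param}\Vert \mu^{\data,\enh}_{\param}),\, d_{\KL}(\mu^{\data,\enh}_{\param}\Vert \mu^{\data,\appr}_{\param})\}\leq 2\exp\!\left(2\Norm{\Phi^{\data,\appr}}_{L^1_{\mu_\param}}+2\Norm{\Phi^{\data,\enh}}_{L^1_{\mu_\param}}\right)\Norm{\Phi^{\data,\appr}-\Phi^{\data,\enh}}_{L^1_{\mu_\param}}.
\end{equation*}

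Next, I would substitute the bound \eqref{eq_L1norm_diff_Phi_approx_minus_Phi_enhanced} from \Cref{lem_L1norm_diff_Phi_approx_minus_Phi_enhanced}, which controls $\Norm{\Phi^{\data,\appr}-\Phi^{\data,\enh}}_{L^1_{\mu_\param}}$ by a sum of two terms: one involving $\Abs{\obsoperator m_{\enhancednoisemodel}}_{\Sigma_\noise^{-1}}$ (multiplied by a factor $2^{-1/2}(\Norm{\Phi^{\data,\appr}}^{1/2}_{L^1_{\mu_\param}}+C_{\enh}\Norm{\Phi^{\data,\enh}}^{1/2}_{L^1_{\mu_\param}})$), and one involving the weighted residual $\Norm{\Abs{\data-\obsoperator\model-\obsoperator m_{\enhancednoisemodel}}^2_{\Sigma_\noise^{-1}-(\enhancednoisecovariance)^{-1}}}_{L^1_{\mu_\param}}$ with prefactor $2^{-1}$. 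To obtain the clean form stated in the proposition, I would then factor both terms through the common bound $\max\{\sqrt{2}(\Norm{\Phi^{\data,\appr}}^{1/2}_{L^1_{\mu_\param}}+C_{\enh}\Norm{\Phi^{\data,\enh}}^{1/2}_{L^1_{\mu_\param}}),\,1\}$, which absorbs the coefficients $2\cdot 2^{-1/2}=\sqrt{2}$ in the first term and $2\cdot 2^{-1}=1$ in the second, yielding the claimed constant $C$.

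There is no real obstacle: the argument is a direct concatenation of \Cref{thm_theorem11_and_proposition6_Sprungk2020} and \Cref{lem_L1norm_diff_Phi_approx_minus_Phi_enhanced}, exactly mirroring the proofs of \Cref{prop_KL_divergence_best_posterior_approx_posterior} and \Cref{prop_KL_divergence_best_posterior_enhanced_noise_posterior}. The only minor bookkeeping point is to verify that the constant $C$ in the statement is of the form one obtains by pulling the maximum of the two weights $\sqrt{2}(\Norm{\Phi^{\data,\appr}}^{1/2}_{L^1_{\mu_\param}}+C_{\enh}\Norm{\Phi^{\data,\enh}}^{1/2}_{L^1_{\mu_\param}})$ and $1$ out in front of the exponential factor $\exp(2\Norm{\Phi^{\data,\appr}}_{L^1_{\mu_\param}}+2\Norm{\Phi^{\data,\enh}}_{L^1_{\mu_\param}})$. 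This matches the form displayed in the proposition, completing the proof.
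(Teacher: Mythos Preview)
Your proposal is correct and matches the paper's approach exactly: the paper states that the proof is similar to that of \Cref{prop_KL_divergence_best_posterior_enhanced_noise_posterior} except that one uses \Cref{lem_L1norm_diff_Phi_approx_minus_Phi_enhanced} in place of \Cref{lem_L1error_best_misfit_enhanced_noise_misfit}, and omits the details. Your bookkeeping of the constants (absorbing $2\cdot 2^{-1/2}=\sqrt{2}$ and $2\cdot 2^{-1}=1$ into the $\max$) is precisely how the constant $C$ in the statement arises.
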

The proof of \Cref{prop_KL_divergence_approx_posterior_enhanced_noise_posterior} is similar to that of \Cref{prop_KL_divergence_best_posterior_enhanced_noise_posterior} except that one uses \Cref{lem_L1norm_diff_Phi_approx_minus_Phi_enhanced} instead of \Cref{lem_L1error_best_misfit_enhanced_noise_misfit}, so we omit it.

\Cref{prop_KL_divergence_approx_posterior_enhanced_noise_posterior} implies the following: given an enhanced noise model with mean $m_{\enhancednoisemodel}$ and covariance $\Sigma_{\enhancednoisemodel}$, if one chooses the observation operator $ \obsoperator$ so that
\begin{equation*}
  \Abs{ \obsoperator m_{\enhancednoisemodel}}_{\Sigma_{\noise}^{-1}}=0=\Norm{ \Abs{\data   -\obsoperator \model-\obsoperator m_{\enhancednoisemodel}}^{2}_{ \Sigma_\noise^{-1}-(\enhancednoisecovariance)^{-1}}}_{L^1_{\mu_\param}},
\end{equation*}
then the enhanced noise posterior $\mu^{\data,\enh}_{\param}$ and the approximate posterior $\mu^{\data,\appr}_{\param}$ coincide.
Equivalently, $\obsoperator m_{\enhancednoisemodel}=0$ and \eqref{eq_enhanced_noise_second_term_sum_to_vanish_equivalent_condition} together imply that $\mu^{\data,\enh}_{\param}=\mu^{\data,\appr}_{\param}$.
Thus, such a choice of observation operator yields an enhanced noise posterior $\mu^{\data,\enh}_{\param}$ that does not account for model error.
In this case, the approximate posterior $\mu^{\data,\appr}_{\param}$ and the enhanced noise posterior $\mu^{\data,\enh}_{\param}$ perform equally well in terms of inference, and it would be better to use the approximate posterior instead, since it is simpler.
Thus, the conditions $\obsoperator m_{\enhancednoisemodel}=0$ and \eqref{eq_enhanced_noise_second_term_sum_to_vanish_equivalent_condition} give an example of negative criteria that one can use to exclude observation operators from consideration when one aims to mitigate the effect of model error on Bayesian inference.

\subsection{Bounds on KL divergence for joint posterior}
\label{ssec_KL_error_joint_posterior}

In the enhanced noise approach given by \eqref{eq_enhanced_noise_misfit_and_posterior}, one accounts for the uncertainty in the unknown state error $\modelerror^{\best}(\param^{\best})$ by approximating the unknown state error using a random variable $\enhancednoisemodel$.
The goal is to infer $\param^\best$ only.
In the joint parameter-error inference approach, one aims to infer $(\param^{\best},\modelerror^{\best})$ jointly, by using a random variable $(\param,\modelerror)$ with prior $\mu_{\param,\modelerror}$ and using Bayes' formula.
The joint prior on the random variable $(\param,\modelerror)$ is given by the product $\mu_\param\otimes \mu_\modelerror$ of the prior $\mu_\param$ on the unknown $\param^\best$ and a prior $\mu_\modelerror$ on the unknown $\modelerror^\best$.
It is possible to specify the prior $\mu_\modelerror$ on the unknown $\modelerror^\best$ without knowing $\modelerror^\best$, but one must specify a space $\modelerrorspace$ of admissible values of $\modelerror^\best$.
We assume that the space $\modelerrorspace$ is a Radon space and that $\mu_\modelerror$ is a Borel probability measure.

The assumption that the prior $\mu_{\param,\modelerror}$ on $(\param,\modelerror)$ has product structure is equivalent to the assumption that $\param$ and $\modelerror$ are independent random variables.
This independence assumption is reasonable, since by \eqref{eq_model_error_definition} it follows that $\modelerror^\best\coloneqq \model^\best-\model$ does not depend on $\param^\best$.

Define the joint misfit and joint posterior by
\begin{equation}
\label{eq_joint_misfit_and_posterior}
\Phi^{\data,\joint} (\param',\modelerror')\coloneqq \tfrac{1}{2}\Abs{\data-\obsoperator   \model(\param')-\obsoperator  \modelerror'(\param')}^{2}_{\Sigma_\noise^{-1}},
 \quad \mu^{\data,\joint}_{\param,\modelerror}(\param',\modelerror') \coloneqq  (\mu_\param \otimes\mu_\modelerror)_{\Phi^{\data,\joint}}.
\end{equation}
An important disadvantage of jointly inferring the parameter and model error is that the dimension of the space on which one performs inference increases; this tends to make the inference task more computationally expensive. 
It is also known that the problem of identifiability may arise; see e.g. \cite{Brynjarsdottir2014}.
We shall not consider the problem of identifiability here.
On the other hand, jointly inferring the parameter and model error is consistent with the Bayesian approach of treating all unknowns as random variables and updating these distributions using the data.
In addition, the joint inference approach offers the possibility to improve the approximate model $\model$ in a data-driven way, e.g. by using the $\modelerror$-marginal posterior mean.
This improvement can be used to obtain better estimates of both the parameter $\param^{\best}$ and the state $\state^{\best}=\model^{\best}(\param^{\best})$.

To compare the joint posterior $\mu^{\data,\joint}_{\param}$ with the posterior measures $\mu^{\data,\best}_{\param}$, $\mu^{\data,\appr}_{\param}$ and $\mu^{\data,\enh}$ that we introduced earlier, we `lift' the misfits and posteriors according to
\begin{equation}
\label{eq_lifted_misfit_and_posterior}
  \Phi^{\data,\bullet}(\param',\modelerror') \coloneqq  \Phi^{\data,\bullet}(\param'),\qquad \mu^{\data,\bullet}_{\param,\modelerror}\coloneqq (\mu_{\param}\otimes \mu_\modelerror)_{\Phi^{\data,\bullet}},\quad \bullet\in\{\appr,\enh,\best\}.
  \end{equation}
In \eqref{eq_lifted_misfit_and_posterior}, we use the notation $\Phi^{\data,\bullet}$ to refer to two functions defined on different domains. 
The following lemma shows that this does not result in any fundamental changes, and that the lifted posterior is the product of the original posterior with the prior law $\mu_\modelerror$ for the unknown $\modelerror^\best$.
\begin{lemma}
 \label{lem_properties_of_lifted_placeholder_objects}
 For $\bullet \in \{\appr,\best,\enh\}$, 
 \begin{equation}
\label{eq_lifted_posteriors_product_structure}
 \mu^{\data,\bullet}_{\param,\modelerror}=  \mu^{\data,\bullet}_{\param}\otimes \mu_\modelerror.
\end{equation}
If $\Phi^{\data,\bullet}\in L^q_{\mu_\param}$ for some $q>0$, then 
\begin{equation}
\label{eq_lifted_misfits_same_norms}
\Norm{\Phi^{\data,\bullet}}_{L^q_{\mu_\param\otimes\mu_\modelerror}}=\Norm{\Phi^{\data,\bullet}}_{L^q_{\mu_\param}}.
\end{equation}
\end{lemma}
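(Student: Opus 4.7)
The proof will rest on the single observation that $\Phi^{\data,\bullet}$ as defined in \eqref{eq_lifted_misfit_and_posterior} is constant in the $\modelerror'$ variable, combined with Fubini's theorem and the fact that $\mu_\modelerror$ is a probability measure. Because both claims reduce to the same mechanism, I plan to dispose of the $L^q$ norm identity first and then use a near-identical calculation for the product structure of the lifted posterior.

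For \eqref{eq_lifted_misfits_same_norms}, I would directly expand
\begin{equation*}
\Norm{\Phi^{\data,\bullet}}_{L^q_{\mu_\param\otimes\mu_\modelerror}}^q
=\int_{\paramspace\times\modelerrorspace}\Abs{\Phi^{\data,\bullet}(\param',\modelerror')}^q\,\rd(\mu_\param\otimes\mu_\modelerror)(\param',\modelerror'),
\end{equation*}
apply Fubini (which is legitimate because $\Abs{\Phi^{\data,\bullet}}^q$ is nonnegative and measurable), and use $\Phi^{\data,\bullet}(\param',\modelerror')=\Phi^{\data,\bullet}(\param')$ together with $\mu_\modelerror(\modelerrorspace)=1$ to collapse the inner integral against $\mu_\modelerror$ to a factor of one, leaving $\int_\paramspace\Abs{\Phi^{\data,\bullet}(\param')}^q\,\rd\mu_\param(\param')=\Norm{\Phi^{\data,\bullet}}_{L^q_{\mu_\param}}^q$. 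Taking $q$-th roots finishes the identity; in particular with $q=1$ it shows that $\Phi^{\data,\bullet}$ is $\mu_\param\otimes\mu_\modelerror$-integrable whenever it is $\mu_\param$-integrable, which legitimises the definition of $\mu^{\data,\bullet}_{\param,\modelerror}$ through the notation $(\mu_\param\otimes\mu_\modelerror)_{\Phi^{\data,\bullet}}$.

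For \eqref{eq_lifted_posteriors_product_structure}, my plan is to compute both sides as densities against the reference measure $\mu_\param\otimes\mu_\modelerror$ and show that the densities coincide $(\mu_\param\otimes\mu_\modelerror)$-almost everywhere. The left-hand side has density
\begin{equation*}
\frac{\rd\mu^{\data,\bullet}_{\param,\modelerror}}{\rd(\mu_\param\otimes\mu_\modelerror)}(\param',\modelerror')=\frac{\exp(-\Phi^{\data,\bullet}(\param',\modelerror'))}{\int_{\paramspace\times\modelerrorspace}\exp(-\Phi^{\data,\bullet}(\hat\param,\hat\modelerror))\,\rd(\mu_\param\otimes\mu_\modelerror)(\hat\param,\hat\modelerror)}.
\end{equation*}
Because $\Phi^{\data,\bullet}$ does not depend on the second argument, the numerator equals $\exp(-\Phi^{\data,\bullet}(\param'))$, and Fubini together with $\mu_\modelerror(\modelerrorspace)=1$ reduces the denominator to $Z^{\data,\bullet}\coloneqq\int_\paramspace\exp(-\Phi^{\data,\bullet}(\hat\param))\,\rd\mu_\param(\hat\param)$, the normalising constant of $\mu^{\data,\bullet}_\param$. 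The right-hand side $\mu^{\data,\bullet}_\param\otimes\mu_\modelerror$ has density $\frac{\rd\mu^{\data,\bullet}_\param}{\rd\mu_\param}(\param')\cdot 1=\exp(-\Phi^{\data,\bullet}(\param'))/Z^{\data,\bullet}$ with respect to $\mu_\param\otimes\mu_\modelerror$, yielding equality of the densities and hence of the measures.

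There is no genuine obstacle here; the only point requiring mild care is ensuring that $Z^{\data,\bullet}\in(0,\infty)$ so that the Radon--Nikodym quotient is well defined. Positivity follows because $\exp(-\Phi^{\data,\bullet})>0$ everywhere, and finiteness is immediate from $\exp(-\Phi^{\data,\bullet})\leq 1$ (the misfits in \eqref{eq_best_misfit_and_posterior}, \eqref{eq_approx_misfit_and_posterior}, \eqref{eq_enhanced_noise_misfit_and_posterior} are nonnegative) together with $\mu_\param\in\mcal{P}(\paramspace)$. All other steps are routine applications of Fubini and the definition of the product measure.
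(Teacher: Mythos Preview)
Your proposal is correct and follows essentially the same approach as the paper: both proofs exploit that $\Phi^{\data,\bullet}$ is constant in $\modelerror'$ together with $\mu_\modelerror(\modelerrorspace)=1$ to identify the normalising constants and hence the densities, and to collapse the $L^q$ integral. Your write-up is more detailed (you spell out Fubini, compare Radon--Nikodym derivatives explicitly, and verify $Z^{\data,\bullet}\in(0,\infty)$), but the underlying argument is the same.
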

\begin{proof}
Since $\mu_\modelerror$ is a probability measure, the definition of $\Phi^{\data,\bullet}(\param',\modelerror')$ in \eqref{eq_lifted_misfit_and_posterior} implies that
\begin{align*}
 \int_{\paramspace\times\modelerrorspace} \exp(-\Phi^{\data,\bullet}(\param',\modelerror'))\rd\mu_\param\otimes\mu_\modelerror(\param',\modelerror')=\int_\paramspace \exp(-\Phi^{\data,\bullet}(\param'))\rd\mu_\param(\param').
\end{align*}
Thus, the normalisation constant for the likelihood $\exp(-\Phi^{\data,\bullet})$ that generates the lifted posterior $\mu^{\data,\bullet}_{\param,\modelerror}$ on $\paramspace\times\modelerrorspace$ agrees with the corresponding normalisation constant for the posterior $\mu^{\data,\bullet}_{\param}$ on $\paramspace$.
This proves \eqref{eq_lifted_posteriors_product_structure}.
The fact that $\Phi^{\data,\bullet}$ is constant with respect to $\modelerror'$ implies the second statement of the lemma.
\end{proof}

Recall that $\bb{P}$ denotes the probability measure in the underlying probability space.
Below, we will write the random variables $\param$ and $\modelerror$ explicitly, and take $L^p$-norms with respect to $\bb{P}$ instead of $\mu_\param\otimes\mu_\modelerror$, e.g.
\begin{equation*}
 \Norm{ \Abs{\obsoperator (\modelerror^{\best}-\modelerror)(\param)}_{\Sigma_\noise^{-1}}^{2}}_{L^1_{\bb{P}}}=\int_{\paramspace\times\modelerrorspace} \Abs{\obsoperator   (\modelerror^{\best}-\modelerror')(\param')}_{\Sigma_\noise^{-1}}^{2}\rd \mu_\param\otimes\mu_\modelerror(\param',\modelerror').
\end{equation*}
See \eqref{eq_L1norm_diff_Phi_best_minus_Phi_joint} below for an example.
\begin{lemma}
 \label{lem_L1norm_diff_Phi_best_minus_Phi_joint}
 If $\Phi^{\data,\joint}\in L^1_{\mu_\param\otimes\mu_\modelerror}$, then
 \begin{equation}
 \label{eq_L1norm_diff_Phi_best_minus_Phi_joint}
  \Norm{\Phi^{\data,\best}-\Phi^{\data,\joint}}_{L^1_{\mu_\param\otimes\mu_\modelerror}}\leq 2^{-1/2}\Norm{ \Abs{\obsoperator (\modelerror^{\best}-\modelerror)(\param)}_{\Sigma_\noise^{-1}}^{2}}_{L^1_{\bb{P}}}^{1/2} \left( \Norm{\Phi^{\data,\joint}}_{L^1_{\mu_\param\otimes\mu_\modelerror}}^{1/2}+ \Norm{\Phi^{\data,\best}}_{L^1_{\mu_\param}}^{1/2}\right).
 \end{equation}
 Furthermore, $\Norm{ \Abs{\obsoperator (\modelerror^{\best}-\modelerror)(\param)}_{\Sigma_\noise^{-1}}^{2}}_{L^1_{\bb{P}}}^{1/2} \leq 2^{1/2}\left( \Norm{\Phi^{\data,\joint}}_{L^1_{\mu_\param\otimes\mu_\modelerror}}^{1/2}+ \Norm{\Phi^{\data,\best}}_{L^1_{\mu_\param}}^{1/2}\right)$.
\end{lemma}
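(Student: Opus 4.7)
The plan is to mimic the strategy used for \Cref{lem_L1error_best_misfit_approx_misfit}, exploiting the fact that both misfits involve the same weighting matrix $\Sigma_{\noise}^{-1}$, so the argument reduces to a difference of squared norms rather than requiring the full machinery of the auxiliary Lemma \ref{lem_L1norm_diff_quadratic_forms}. The key algebraic observation is that $\obsoperator\model^{\best}=\obsoperator\model+\obsoperator\modelerror^{\best}$, so if we set $x(\param')\coloneqq \data-\obsoperator\model^{\best}(\param')$ and $y(\param',\modelerror')\coloneqq \data-\obsoperator\model(\param')-\obsoperator\modelerror'(\param')$, then $y-x=-\obsoperator(\modelerror^{\best}-\modelerror')(\param')$, $2\Phi^{\data,\best}(\param')=\Abs{x(\param')}_{\Sigma_{\noise}^{-1}}^{2}$, and $2\Phi^{\data,\joint}(\param',\modelerror')=\Abs{y(\param',\modelerror')}_{\Sigma_{\noise}^{-1}}^{2}$.

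For the first inequality, I would apply the factorisation $\Abs{x}^{2}_{\Sigma_{\noise}^{-1}}-\Abs{y}^{2}_{\Sigma_{\noise}^{-1}}=(\Abs{x}_{\Sigma_{\noise}^{-1}}-\Abs{y}_{\Sigma_{\noise}^{-1}})(\Abs{x}_{\Sigma_{\noise}^{-1}}+\Abs{y}_{\Sigma_{\noise}^{-1}})$ together with the reverse triangle inequality $\Abs{\Abs{x}_{\Sigma_{\noise}^{-1}}-\Abs{y}_{\Sigma_{\noise}^{-1}}}\leq \Abs{x-y}_{\Sigma_{\noise}^{-1}}=\Abs{\obsoperator(\modelerror^{\best}-\modelerror')(\param')}_{\Sigma_{\noise}^{-1}}$. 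This yields the pointwise bound
\begin{equation*}
 \Abs{\Phi^{\data,\best}(\param')-\Phi^{\data,\joint}(\param',\modelerror')}\leq \tfrac{1}{\sqrt{2}}\Abs{\obsoperator(\modelerror^{\best}-\modelerror')(\param')}_{\Sigma_{\noise}^{-1}}\Bigl(\sqrt{\Phi^{\data,\best}(\param')}+\sqrt{\Phi^{\data,\joint}(\param',\modelerror')}\Bigr).
\end{equation*}
Integrating against $\mu_\param\otimes\mu_\modelerror$ and applying Cauchy--Schwarz term by term gives a product of $L^{2}$ norms; rewriting each $L^{2}$ norm as the square root of an $L^{1}$ norm and invoking \eqref{eq_lifted_misfits_same_norms} from \Cref{lem_properties_of_lifted_placeholder_objects} to replace $\Norm{\Phi^{\data,\best}}_{L^{1}_{\mu_\param\otimes\mu_\modelerror}}^{1/2}$ by $\Norm{\Phi^{\data,\best}}_{L^{1}_{\mu_\param}}^{1/2}$ yields the stated bound \eqref{eq_L1norm_diff_Phi_best_minus_Phi_joint}. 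The identification of the integral against $\mu_\param\otimes\mu_\modelerror$ with an $L^{1}_{\bb{P}}$ norm of the observed model-error-difference is just the change-of-variables $(\param,\modelerror)\sim \mu_\param\otimes\mu_\modelerror$.

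For the second inequality, I would instead use the forward triangle inequality
\begin{equation*}
 \Abs{\obsoperator(\modelerror^{\best}-\modelerror')(\param')}_{\Sigma_{\noise}^{-1}}=\Abs{x(\param')-y(\param',\modelerror')}_{\Sigma_{\noise}^{-1}}\leq \sqrt{2}\sqrt{\Phi^{\data,\best}(\param')}+\sqrt{2}\sqrt{\Phi^{\data,\joint}(\param',\modelerror')},
\end{equation*}
take the $L^{2}_{\bb{P}}$ norm of both sides, apply Minkowski's inequality on the right, and rewrite the resulting $L^{2}_{\bb{P}}$ and $L^{2}_{\mu_\param\otimes\mu_\modelerror}$ norms as square roots of $L^{1}$ norms, again using \eqref{eq_lifted_misfits_same_norms} to get the $L^{1}_{\mu_\param}$ norm of $\Phi^{\data,\best}$.

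No step is really hard here; the main thing to get right is bookkeeping, namely keeping the ambient measure for each norm straight (writing the observed-error norm in $L^{1}_{\bb{P}}$ rather than in $L^{1}_{\mu_\param\otimes\mu_\modelerror}$ to match the statement) and tracking the factors of $\sqrt{2}$ produced by the relations $\Abs{x}_{\Sigma_{\noise}^{-1}}=\sqrt{2\Phi^{\data,\best}}$ and $\Abs{y}_{\Sigma_{\noise}^{-1}}=\sqrt{2\Phi^{\data,\joint}}$ so that the constants $2^{-1/2}$ and $2^{1/2}$ in the statement emerge correctly.
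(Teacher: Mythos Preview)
Your proof is correct and essentially equivalent to the paper's. The paper invokes the auxiliary \Cref{lem_L1norm_diff_quadratic_forms} with $f\leftarrow \data-\obsoperator\model^{\best}(\param)$, $g\leftarrow \obsoperator(\modelerror^{\best}-\modelerror)(\param)$, $M_1\leftarrow\Sigma_\noise$, $M_2\leftarrow 0$, $\mu\leftarrow\bb{P}$, whereas you reproduce that lemma's $M_2=0$ case inline; the only cosmetic difference is that the paper's lemma reaches the pointwise bound via $\Abs{a}^{2}-\Abs{a+b}^{2}=-\ang{b,2a+b}$ and Cauchy--Schwarz on the inner product, while you use the factorisation $\Abs{x}^{2}-\Abs{y}^{2}=(\Abs{x}-\Abs{y})(\Abs{x}+\Abs{y})$ and the reverse triangle inequality --- both yield the identical pointwise estimate and the rest of the argument (Cauchy--Schwarz in $L^1$, Minkowski, and the use of \eqref{eq_lifted_misfits_same_norms}) is the same.
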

\begin{proposition}
 \label{prop_KL_divergence_best_posterior_joint_posterior}
 If $\Phi^{\data,\joint}\in L^1_{\mu_\param\otimes\mu_\modelerror}$, then $\mu^{\data,\joint}_{\param,\modelerror}$ and $\mu^{\data,\best}_{\param,\modelerror}$ satisfy
 \begin{align*}
  &\max\{ d_{\KL}(\mu^{\data,\best}_{\param,\modelerror}\Vert\mu^{\data,\joint}_{\param,\modelerror}), d_{\KL}(\mu^{\data,\joint}_{\param,\modelerror}\Vert \mu^{\data,\best}_{\param,\modelerror})\}
  \leq C  \Norm{ \Abs{\obsoperator (\modelerror^{\best}-\modelerror)(\param)}_{\Sigma_\noise^{-1}}^{2}}_{L^1_{\bb{P}}}^{1/2},
 \end{align*}
 where $C=2^{1/2} \exp\left(2 \Norm{\Phi^{\data,\joint}}_{L^1_{\mu_\param\otimes\mu_\modelerror}}+2\Norm{\Phi^{\data,\best}}_{L^1_{\mu_\param}}\right) \left( \Norm{\Phi^{\data,\joint}}_{L^1_{\mu_\param\otimes\mu_\modelerror}}^{1/2}+ \Norm{\Phi^{\data,\best}}_{L^1_{\mu_\param}}^{1/2}\right)$.
\end{proposition}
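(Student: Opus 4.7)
The plan is to mirror the structure of the proofs of \Cref{prop_KL_divergence_best_posterior_approx_posterior} and \Cref{prop_KL_divergence_best_posterior_enhanced_noise_posterior}, since the statement and the available tools are directly analogous. Namely, the result should follow by combining \Cref{thm_theorem11_and_proposition6_Sprungk2020} (local Lipschitz stability of posteriors in KL divergence under $L^1$ perturbations of the misfit) with \Cref{lem_L1norm_diff_Phi_best_minus_Phi_joint} (bound on the $L^1_{\mu_\param\otimes\mu_\modelerror}$ difference of the two misfits).

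First, I would verify the hypotheses required for \Cref{thm_theorem11_and_proposition6_Sprungk2020}. We apply the theorem with $\mu\leftarrow \mu_\param\otimes\mu_\modelerror$, $\Phi^{(1)}\leftarrow \Phi^{\data,\best}$ viewed as a function on $\paramspace\times\modelerrorspace$ via the lift in \eqref{eq_lifted_misfit_and_posterior}, and $\Phi^{(2)}\leftarrow \Phi^{\data,\joint}$. By hypothesis, $\Phi^{\data,\joint}\in L^1_{\mu_\param\otimes\mu_\modelerror}$, and by the standing assumption that $\Phi^{\data,\best}\in L^1_{\mu_\param}$ together with \eqref{eq_lifted_misfits_same_norms} in \Cref{lem_properties_of_lifted_placeholder_objects}, we also have $\Phi^{\data,\best}\in L^1_{\mu_\param\otimes\mu_\modelerror}$ with equal norm. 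Both misfits are nonnegative by construction, so the hypotheses of \Cref{thm_theorem11_and_proposition6_Sprungk2020} are satisfied.

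Next, I would plug in and simplify. The second (symmetric) inequality of \Cref{thm_theorem11_and_proposition6_Sprungk2020} yields
\begin{equation*}
\max\{d_{\KL}(\mu^{\data,\best}_{\param,\modelerror}\Vert\mu^{\data,\joint}_{\param,\modelerror}), d_{\KL}(\mu^{\data,\joint}_{\param,\modelerror}\Vert \mu^{\data,\best}_{\param,\modelerror})\} \leq 2\exp\!\left(2\Norm{\Phi^{\data,\best}}_{L^1_{\mu_\param\otimes\mu_\modelerror}}+2\Norm{\Phi^{\data,\joint}}_{L^1_{\mu_\param\otimes\mu_\modelerror}}\right)\Norm{\Phi^{\data,\best}-\Phi^{\data,\joint}}_{L^1_{\mu_\param\otimes\mu_\modelerror}},
\end{equation*}
where on the left I have used that the posteriors in the statement are precisely the ones generated from the prior $\mu_\param\otimes\mu_\modelerror$ and the misfits $\Phi^{\data,\best}$ and $\Phi^{\data,\joint}$, recalling \eqref{eq_lifted_posteriors_product_structure}. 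The norm equality \eqref{eq_lifted_misfits_same_norms} from \Cref{lem_properties_of_lifted_placeholder_objects} then lets me replace $\Norm{\Phi^{\data,\best}}_{L^1_{\mu_\param\otimes\mu_\modelerror}}$ by $\Norm{\Phi^{\data,\best}}_{L^1_{\mu_\param}}$ inside the exponential.

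Finally, I would substitute the bound from \eqref{eq_L1norm_diff_Phi_best_minus_Phi_joint} in \Cref{lem_L1norm_diff_Phi_best_minus_Phi_joint} for the remaining $L^1$ difference, which introduces the factor $2^{-1/2}\bigl(\Norm{\Phi^{\data,\joint}}_{L^1_{\mu_\param\otimes\mu_\modelerror}}^{1/2}+\Norm{\Phi^{\data,\best}}_{L^1_{\mu_\param}}^{1/2}\bigr)\Norm{\Abs{\obsoperator(\modelerror^{\best}-\modelerror)(\param)}_{\Sigma_\noise^{-1}}^2}_{L^1_{\bb{P}}}^{1/2}$. Multiplying by the leading $2$ produces the $2^{1/2}$ in the stated constant, giving exactly the bound claimed in \Cref{prop_KL_divergence_best_posterior_joint_posterior}.

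There is no real obstacle here beyond bookkeeping: the only conceptual point that deserves care is the product-space reformulation, specifically ensuring that the lifted best misfit generates the lifted best posterior $\mu^{\data,\best}_{\param,\modelerror}=\mu^{\data,\best}_\param\otimes\mu_\modelerror$ so that \Cref{thm_theorem11_and_proposition6_Sprungk2020} genuinely applies on $\paramspace\times\modelerrorspace$. This is precisely the content of \Cref{lem_properties_of_lifted_placeholder_objects}, so the whole argument is a clean two-line invocation of earlier results.
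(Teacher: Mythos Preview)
Your proposal is correct and follows exactly the approach the paper intends: it omits the proof of \Cref{prop_KL_divergence_best_posterior_joint_posterior} precisely because it is a direct combination of \Cref{thm_theorem11_and_proposition6_Sprungk2020} with \Cref{lem_L1norm_diff_Phi_best_minus_Phi_joint}, using \Cref{lem_properties_of_lifted_placeholder_objects} to handle the lift to $\paramspace\times\modelerrorspace$, just as in \Cref{prop_KL_divergence_best_posterior_approx_posterior} and \Cref{prop_KL_divergence_best_posterior_enhanced_noise_posterior}. Your bookkeeping of the constants (the $2\cdot 2^{-1/2}=2^{1/2}$ factor and the norm replacement via \eqref{eq_lifted_misfits_same_norms}) is accurate.
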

We prove \Cref{lem_L1norm_diff_Phi_best_minus_Phi_joint} in \Cref{ssec_proofs_KL_bound_joint_posterior}.
The proof of \Cref{prop_KL_divergence_best_posterior_joint_posterior} is similar to the proofs of \Cref{prop_KL_divergence_best_posterior_approx_posterior} and \Cref{prop_KL_divergence_best_posterior_enhanced_noise_posterior}, so we omit it.

By \Cref{prop_KL_divergence_best_posterior_joint_posterior}, a sufficient condition for $\mu^{\data,\joint}_{\param,\modelerror}=\mu^{\data,\best}_{\param,\modelerror}$ is that 
\begin{equation*}
 \bb{P}\left(\modelerror^{\best}(\param)-\modelerror(\param)\in\kernel(\obsoperator)\right)=1.
\end{equation*}
In order to use this condition to choose a prior $\mu_\modelerror$ and observation operator $\obsoperator$, one must have some a priori knowledge about the set $\{\modelerror^{\best}(\param')\ :\ \param'\in\paramspace\}$, e.g. that this set is contained in a proper affine subspace $x+V=\{x+v\ :\ v\in V\}$ of $\statespace$, where $x$ and $V$ are an element and subspace of $\statespace$ respectively.
However, by the discussion before \Cref{ssec_KL_bound_enhanced_noise}, if one knows that $\{\modelerror^{\best}(\param')\ :\ \param'\in\paramspace\}\subseteq x+V$, then one can use this information to choose $\obsoperator$ so that $\mu^{\data,\appr}_{\param}=\mu^{\data,\best}_{\param}$, e.g. by setting $\obsoperator$ so that $\spn(x+V)\subseteq \kernel(\obsoperator)$.

Next, we consider the lifted approximate posterior $\mu^{\data,\appr}_{\param,\modelerror}$ and the joint posterior $\mu^{\data,\joint}_{\param,\modelerror}$.
\begin{lemma}
 \label{lem_L1norm_diff_Phi_approx_minus_Phi_joint}
 If $\Phi^{\data,\appr}\in L^1_{\mu_\param}$ and $\Phi^{\data,\joint}\in L^1_{\mu_\param\otimes\mu_\modelerror}$, then
 \begin{equation}
 \label{eq_L1norm_diff_Phi_approx_minus_Phi_joint}
    \Norm{\Phi^{\data,\appr}-\Phi^{\data,\joint}}_{L^1_{\mu_\param\otimes\mu_\modelerror}}\leq 2^{-1/2}\Norm{ \Abs{\obsoperator \modelerror(\param)}_{\Sigma_\noise^{-1}}^{2}}_{L^1_{\bb{P}}}^{1/2} \left( \Norm{\Phi^{\data,\joint}}_{L^1_{\mu_\param\otimes\mu_\modelerror}}^{1/2}+ \Norm{\Phi^{\data,\appr}}_{L^1_{\mu_\param}}^{1/2}\right).
 \end{equation}
 Furthermore, $\Norm{ \Abs{\obsoperator \modelerror(\param)}_{\Sigma_\noise^{-1}}^{2}}_{L^1_{\bb{P}}}^{1/2} \leq 2^{1/2}\left( \Norm{\Phi^{\data,\joint}}_{L^1_{\mu_\param\otimes\mu_\modelerror}}^{1/2}+ \Norm{\Phi^{\data,\appr}}_{L^1_{\mu_\param}}^{1/2}\right)$.
\end{lemma}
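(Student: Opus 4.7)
The plan is to repeat the strategy used for \Cref{lem_L1error_best_misfit_approx_misfit} and \Cref{lem_L1norm_diff_Phi_best_minus_Phi_joint}: both $\Phi^{\data,\appr}$ and $\Phi^{\data,\joint}$ are half the squared $\Sigma_\noise^{-1}$-weighted norm of an affine function of the unknowns, and these two affine functions differ by $\obsoperator \modelerror'(\param')$, so the difference of misfits admits a polarisation-type factorisation. Concretely, I would set $u(\param')\coloneqq \data - \obsoperator \model(\param')$ and $v(\param',\modelerror')\coloneqq u(\param') - \obsoperator \modelerror'(\param')$, so that $2\Phi^{\data,\appr} = \Abs{u}_{\Sigma_\noise^{-1}}^2$, $2\Phi^{\data,\joint} = \Abs{v}_{\Sigma_\noise^{-1}}^2$, and $u - v = \obsoperator \modelerror'(\param')$ is exactly the quantity appearing on the right-hand side of \eqref{eq_L1norm_diff_Phi_approx_minus_Phi_joint}.

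First I would combine the polarisation identity $\Abs{u}_{\Sigma_\noise^{-1}}^2 - \Abs{v}_{\Sigma_\noise^{-1}}^2 = \ang{u-v,\, u+v}_{\Sigma_\noise^{-1}}$ with the Cauchy--Schwarz inequality in $\bb{R}^n$ to bound the integrand of $\Norm{\Phi^{\data,\appr} - \Phi^{\data,\joint}}_{L^1_{\mu_\param\otimes\mu_\modelerror}}$ in absolute value by $\tfrac{1}{2}\Abs{u-v}_{\Sigma_\noise^{-1}}\Abs{u+v}_{\Sigma_\noise^{-1}}$, then apply Cauchy--Schwarz in $L^2(\mu_\param \otimes \mu_\modelerror)$, and finally apply Minkowski's inequality to $\Abs{u+v}_{\Sigma_\noise^{-1}}$. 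Since $u$ does not depend on $\modelerror'$, \Cref{lem_properties_of_lifted_placeholder_objects} yields $\Norm{\Abs{u}_{\Sigma_\noise^{-1}}^2}_{L^1_{\mu_\param\otimes\mu_\modelerror}} = 2\Norm{\Phi^{\data,\appr}}_{L^1_{\mu_\param}}$, while by definition $\Norm{\Abs{v}_{\Sigma_\noise^{-1}}^2}_{L^1_{\mu_\param\otimes\mu_\modelerror}} = 2\Norm{\Phi^{\data,\joint}}_{L^1_{\mu_\param\otimes\mu_\modelerror}}$ and $\Norm{\Abs{u-v}_{\Sigma_\noise^{-1}}^2}_{L^1_{\mu_\param\otimes\mu_\modelerror}} = \Norm{\Abs{\obsoperator \modelerror(\param)}_{\Sigma_\noise^{-1}}^{2}}_{L^1_{\bb{P}}}$. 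Collecting the resulting factors of $2^{1/2}$ yields \eqref{eq_L1norm_diff_Phi_approx_minus_Phi_joint}.

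For the auxiliary inequality on $\Norm{\Abs{\obsoperator \modelerror(\param)}_{\Sigma_\noise^{-1}}^{2}}_{L^1_{\bb{P}}}^{1/2}$, I would use $u-v = \obsoperator\modelerror'(\param')$, the pointwise triangle inequality $\Abs{u-v}_{\Sigma_\noise^{-1}}\leq \Abs{u}_{\Sigma_\noise^{-1}} + \Abs{v}_{\Sigma_\noise^{-1}}$, and Minkowski in $L^2(\mu_\param\otimes\mu_\modelerror)$, together with the same identifications of $\Norm{\Abs{u}_{\Sigma_\noise^{-1}}^2}_{L^1}$ and $\Norm{\Abs{v}_{\Sigma_\noise^{-1}}^2}_{L^1}$ used above. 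I do not anticipate any significant obstacle, since this lemma is essentially the specialisation of \Cref{lem_L1norm_diff_Phi_best_minus_Phi_joint} in which one sets $\model^\best \equiv \model$, hence $\modelerror^\best \equiv 0$; the proof should be formally identical. The only points requiring care are keeping track of which base measure ($\mu_\param$, $\mu_\param\otimes\mu_\modelerror$, or $\bb{P}$) each $L^p$-norm is taken with respect to, and invoking \Cref{lem_properties_of_lifted_placeholder_objects} whenever the integrand depends only on $\param'$.
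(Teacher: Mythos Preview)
Your proposal is correct and follows essentially the same approach as the paper: the paper sets $f\leftarrow \data-\obsoperator\model(\param)$, $g\leftarrow -\obsoperator\modelerror(\param)$, $M_1\leftarrow\Sigma_\noise$, $M_2\leftarrow 0$ and invokes \Cref{lem_L1norm_diff_quadratic_forms} (specifically \eqref{eq_lem_L1norm_diff_quadratic_forms_same_weighting_matrix} and \eqref{eq_bound_L1norm_g}) together with \Cref{lem_properties_of_lifted_placeholder_objects}, which is exactly your polarisation plus Cauchy--Schwarz plus Minkowski argument packaged into a lemma. Your $u$ and $v$ are the paper's $f$ and $f+g$, and the identity $\Abs{u}^2-\Abs{v}^2=\ang{u-v,u+v}$ is the paper's $\Abs{f}^2-\Abs{f+g}^2=-\ang{g,2f+g}$, so the only difference is that you reprove the content of \Cref{lem_L1norm_diff_quadratic_forms} inline rather than citing it.
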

\begin{proposition}
 \label{prop_KL_divergence_approx_posterior_joint_posterior}
 If $\Phi^{\data,\appr}\in L^1_{\mu_\param}$ and $\Phi^{\data,\joint}\in L^1_{\mu_\param\otimes\mu_\modelerror}$, then
 \begin{align*}
  &\max\{ d_{\KL}(\mu^{\data,\appr}_{\param,\modelerror}\Vert\mu^{\data,\joint}_{\param,\modelerror}), d_{\KL}(\mu^{\data,\joint}_{\param,\modelerror}\Vert \mu^{\data,\appr}_{\param,\modelerror})\}
  \leq C \Norm{ \Abs{\obsoperator \modelerror(\param)}_{\Sigma_\noise^{-1}}^{2}}_{L^1_{\bb{P}}}^{1/2},
 \end{align*}
 where $C=2^{1/2} \exp\left(2 \Norm{\Phi^{\data,\joint}}_{L^1_{\mu_\param\otimes\mu_\modelerror}}+2\Norm{\Phi^{\data,\appr}}_{L^1_{\mu_\param}}\right) \left( \Norm{\Phi^{\data,\joint}}_{L^1_{\mu_\param\otimes\mu_\modelerror}}^{1/2}+ \Norm{\Phi^{\data,\appr}}_{L^1_{\mu_\param}}^{1/2}\right)$.
\end{proposition}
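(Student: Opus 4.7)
The plan is to follow the same two-step template used for \Cref{prop_KL_divergence_best_posterior_approx_posterior}, \Cref{prop_KL_divergence_best_posterior_enhanced_noise_posterior}, and \Cref{prop_KL_divergence_best_posterior_joint_posterior}: first invoke the local Lipschitz stability estimate of \Cref{thm_theorem11_and_proposition6_Sprungk2020} to reduce the KL bound to an $L^{1}$ bound between misfits, and then apply \Cref{lem_L1norm_diff_Phi_approx_minus_Phi_joint} to control this $L^{1}$ difference by the observed-model-error factor $\Norm{\Abs{\obsoperator \modelerror(\param)}_{\Sigma_\noise^{-1}}^{2}}_{L^{1}_{\bb{P}}}^{1/2}$.

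More concretely, I would first invoke \Cref{lem_properties_of_lifted_placeholder_objects} to upgrade the hypothesis $\Phi^{\data,\appr}\in L^{1}_{\mu_\param}$ into $\Phi^{\data,\appr}\in L^{1}_{\mu_\param\otimes\mu_\modelerror}$ with equality of norms, so that both misfits $\Phi^{\data,\appr}$ and $\Phi^{\data,\joint}$ live in the same $L^{1}$ space over $\paramspace\times\modelerrorspace$; this is what allows \Cref{thm_theorem11_and_proposition6_Sprungk2020} to be applied legitimately on the product space. Then I would apply the second inequality of \Cref{thm_theorem11_and_proposition6_Sprungk2020} with the substitution $\Phi^{(1)}\leftarrow \Phi^{\data,\appr}$, $\Phi^{(2)}\leftarrow \Phi^{\data,\joint}$, and $\mu\leftarrow \mu_\param\otimes\mu_\modelerror$ to obtain
\begin{equation*}
\max\{d_{\KL}(\mu^{\data,\appr}_{\param,\modelerror}\Vert \mu^{\data,\joint}_{\param,\modelerror}),d_{\KL}(\mu^{\data,\joint}_{\param,\modelerror}\Vert \mu^{\data,\appr}_{\param,\modelerror})\}\leq 2\exp\!\left(2\Norm{\Phi^{\data,\appr}}_{L^{1}_{\mu_\param\otimes\mu_\modelerror}}+2\Norm{\Phi^{\data,\joint}}_{L^{1}_{\mu_\param\otimes\mu_\modelerror}}\right)\Norm{\Phi^{\data,\appr}-\Phi^{\data,\joint}}_{L^{1}_{\mu_\param\otimes\mu_\modelerror}}.
\end{equation*}

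The final step is to substitute the bound \eqref{eq_L1norm_diff_Phi_approx_minus_Phi_joint} from \Cref{lem_L1norm_diff_Phi_approx_minus_Phi_joint} into the right-hand side, and to replace $\Norm{\Phi^{\data,\appr}}_{L^{1}_{\mu_\param\otimes\mu_\modelerror}}$ by $\Norm{\Phi^{\data,\appr}}_{L^{1}_{\mu_\param}}$ using \eqref{eq_lifted_misfits_same_norms}. Collecting the factor $2\cdot 2^{-1/2}=2^{1/2}$ in front and absorbing the exponential and the square-root sum into the constant $C$ yields exactly the claimed bound. I do not expect any genuine obstacle: the two supporting results \Cref{thm_theorem11_and_proposition6_Sprungk2020} and \Cref{lem_L1norm_diff_Phi_approx_minus_Phi_joint} do essentially all the work, and the only subtlety worth flagging is the change of base measure from $\mu_\param$ to the product measure $\mu_\param\otimes\mu_\modelerror$, which is handled cleanly by the lifting identity \eqref{eq_lifted_misfits_same_norms}. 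Accordingly the proof is structurally identical to that of \Cref{prop_KL_divergence_best_posterior_approx_posterior}, with the role of $\modelerror^\best$ replaced by $\modelerror$ (a random variable under the prior $\mu_\modelerror$) and all expectations taken with respect to $\bb{P}$ rather than $\mu_\param$.
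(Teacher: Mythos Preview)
Your proposal is correct and follows exactly the approach the paper indicates: it omits the proof precisely because it is structurally identical to those of \Cref{prop_KL_divergence_best_posterior_approx_posterior} and \Cref{prop_KL_divergence_best_posterior_enhanced_noise_posterior}, combining \Cref{thm_theorem11_and_proposition6_Sprungk2020} on the product space with \Cref{lem_L1norm_diff_Phi_approx_minus_Phi_joint} and the lifting identity \eqref{eq_lifted_misfits_same_norms}. Your observation about needing \Cref{lem_properties_of_lifted_placeholder_objects} to place both misfits in $L^{1}_{\mu_\param\otimes\mu_\modelerror}$ is the only subtlety, and you handle it correctly.
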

We prove \Cref{lem_L1norm_diff_Phi_approx_minus_Phi_joint} in \Cref{ssec_proofs_KL_bound_joint_posterior} and omit the proof of \Cref{prop_KL_divergence_approx_posterior_joint_posterior} due to its similarity with the proofs of \Cref{prop_KL_divergence_best_posterior_approx_posterior} and \Cref{prop_KL_divergence_best_posterior_enhanced_noise_posterior}.

Recall that if $ \bb{P}\left(\modelerror^{\best}(\param)-\modelerror(\param)\in\kernel(\obsoperator)\right)=1$, then the joint posterior $\mu^{\data,\joint}_{\param,\modelerror}$ and lifted best posterior $\mu^{\data,\best}_{\param,\modelerror}$ agree.
\Cref{prop_KL_divergence_approx_posterior_joint_posterior} shows that if $\bb{P}\left(\modelerror(\param)\in\kernel(\obsoperator)\right)=1$, then the joint posterior $\mu^{\data,\joint}_{\param,\modelerror}$ and the lifted aproximate posterior $\mu^{\data,\appr}_{\param,\modelerror}$ agree.
In this case, there is no additional benefit in terms of inference from using the joint posterior, so $\bb{P}\left(\modelerror(\param)\in\kernel(\obsoperator)\right)=1$ is a negative criterion by which one can exclude an observation operator from consideration when one aims to mitigate the effect of model error on Bayesian inference.
We can interpret the negation of this condition, i.e. that $\modelerror(\param)\notin\kernel(\obsoperator)$ with positive $\bb{P}$-probability, as follows: if the joint posterior $\mu^{\data,\joint}_{\param,\modelerror}$ differs from the lifted approximate posterior $\mu^{\data,\appr}_{\param,\modelerror}$, then the observed state update $\obsoperator\modelerror(\param)$ cannot vanish $\bb{P}$-almost surely.
This implies that the model $\model$ must be updated by some nonzero $\modelerror'\in\modelerrorspace$ with positive $\bb{P}$-probability.

The preceding observation establishes a connection with the so-called `parametrised background data-weak approach' for state estimation.
Consider the setting where the state space $\statespace$ is a Hilbert space and $\obsoperator=(\ell_1,\ldots,\ell_n)$ for $n$ continuous linear functionals $(\ell_i)_{i\in[n]}\subset\statespace^\ast$.
By a Hilbert space identity, $\kernel(\obsoperator)$ is the orthogonal complement of the closure of the range of $\obsoperator^\ast$.
Denote the Riesz representation operator by $R$, so that $R\ell_i\in\statespace$ for every $i\in[n]$.
Then one can verify by direct calculations that $\range(\obsoperator^\ast)=\spn(R\ell_i,\ i\in[n])$.
Since every finite-dimensional subspace of $\statespace$ is closed, $\modelerror(\param)\notin\kernel(\obsoperator)$ holds if $\modelerror(\param) \in \spn(R\ell_i,\ i\in[n])$.
The subspace $\spn(R\ell_i,\ i\in[n])$ corresponds to the `variational update space' in \cite[Section 2.3]{maday2019adaptive}. 
The term `update space' refers to the goal of updating an existing model that maps parameters to states in order to improve state prediction.

It is possible to state and prove the analogues of the preceding bounds for the lifted enhanced noise posterior $\mu^{\data,\enh}_{\param,\modelerror}$.
These bounds are not relevant for the main goal of this paper.
However, for the sake of completeness, we state them in \Cref{ssec_proofs_KL_bound_joint_posterior}.

\subsection{Kullback--Leibler error of marginal posterior}
\label{ssec_KL_error_marginal_posterior}

We define the marginal posterior as the $\param$-marginal of the joint posterior $\mu^{\data,\joint}_{\param,\modelerror}$:
\begin{equation}
\label{eq_marginal_posterior}
 \mu^{\data,\marg}_{\param}(S)=\int_{S\times\modelerrorspace} \rd\mu^{\data,\joint}_{\param,\modelerror}(\param',\modelerror'),\quad S\in\mcal{B}(\paramspace).
\end{equation}
One can approximate the marginal posterior $\mu^{\data,\marg}_{\param}$ by using Monte Carlo integration of the joint posterior $\mu^{\data,\joint}_{\param,\modelerror}$ over possible realisations of $\modelerror$, for example.
Since the marginal posterior requires the joint posterior, it inherits the disadvantages and advantages of the joint posterior.

In \Cref{ssec_KL_error_joint_posterior}, we bounded the KL divergence of the joint posterior $\mu^{\data,\joint}_{\param,\modelerror}$ with respect to the lifted posteriors $\mu^{\data,\bullet}_{\param,\modelerror}$ for $\bullet\in\{\appr,\best,\enh\}$ that were defined in \eqref{eq_lifted_misfit_and_posterior}, and we observed in \Cref{lem_properties_of_lifted_placeholder_objects} that the $\param$-marginal of the lifted posterior $\mu^{\data,\bullet}_{\param,\modelerror}$ is exactly $\mu^{\data,\bullet}_{\param}$.
These bounds imply analogous bounds on the KL divergence of the marginal posterior $\mu^{\data,\marg}$ with respect to $\mu^{\data,\best}_{\param}$, $\mu^{\data,\appr}_{\param}$, and $\mu^{\data,\enh}_{\param}$.

Below, $\mu^{\data,\joint}_{\modelerror|\param}$ denotes the regular version of the joint posterior law of $\modelerror$ conditioned on $\param$.
Given our assumption that the random variable $\modelerror$ takes values in a Radon space, it follows that the regular conditional distribution $\mu^{\data,\joint}_{\modelerror|\param}$ exists and is unique up to sets of $\mu_\param$-measure zero.
\begin{proposition}
 \label{prop_KL_divergence_marginal_posterior}
 Let $\Phi^{\data,\joint}\in L^1_{\mu_\param\otimes\mu_\modelerror}$ and $\bullet\in\{\appr,\best,\enh\}$.
 Suppose $\Phi^{\data,\bullet}\in L^1_{\mu_\param}$.
 Then 
 \begin{align*}
  d_{\KL}(\mu^{\data,\marg}_{\param}\Vert \mu^{\data,\bullet}_{\param})=& d_{\KL}(\mu^{\data,\joint}_{\param,\modelerror}\Vert \mu^{\data,\bullet}_{\param,\modelerror})-\int_{\paramspace} d_{\KL}(\mu^{\data,\joint}_{\modelerror|\param}\Vert \mu_\modelerror)\rd \mu^{\data,\marg}_\param,
  \\
  d_{\KL}(\mu^{\data,\bullet}_{\param}\Vert \mu^{\data,\marg}_{\param})=& d_{\KL}(\mu^{\data,\bullet}_{\param,\modelerror}\Vert \mu^{\data,\joint}_{\param,\modelerror})-\int_{\paramspace} d_{\KL}(\mu_\modelerror\Vert \mu^{\data,\joint}_{\modelerror|\param})\rd\mu^{\data,\bullet}_\param.
 \end{align*}
\end{proposition}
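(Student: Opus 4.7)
The plan is to derive both identities as direct consequences of the chain rule (disintegration identity) for Kullback--Leibler divergence on the product space $\paramspace\times\modelerrorspace$. Specifically, if $P,Q\in\mcal{P}(\paramspace\times\modelerrorspace)$ admit disintegrations $P(\rd\param,\rd\modelerror)=P_{\modelerror|\param}(\rd\modelerror)P_\param(\rd\param)$ and $Q(\rd\param,\rd\modelerror)=Q_{\modelerror|\param}(\rd\modelerror)Q_\param(\rd\param)$ along the $\param$-coordinate, and if $P\ll Q$, then
\begin{equation*}
 d_{\KL}(P\Vert Q)= d_{\KL}(P_\param\Vert Q_\param) + \int_{\paramspace} d_{\KL}(P_{\modelerror|\param}\Vert Q_{\modelerror|\param})\rd P_\param.
\end{equation*}
Both claimed identities will follow by rearrangement after applying this rule with appropriate choices of $P$ and $Q$.

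First, I would verify the required absolute continuity. Since $\Phi^{\data,\bullet}\in L^1_{\mu_\param}$, \Cref{lem_properties_of_lifted_placeholder_objects} gives $\Phi^{\data,\bullet}\in L^1_{\mu_\param\otimes\mu_\modelerror}$, so together with $\Phi^{\data,\joint}\in L^1_{\mu_\param\otimes\mu_\modelerror}$, the second statement of \Cref{thm_theorem11_and_proposition6_Sprungk2020} (applied with $\mu\leftarrow\mu_\param\otimes\mu_\modelerror$, $\Phi^{(1)}\leftarrow \Phi^{\data,\joint}$, $\Phi^{(2)}\leftarrow \Phi^{\data,\bullet}$) yields mutual equivalence of $\mu^{\data,\joint}_{\param,\modelerror}$ and $\mu^{\data,\bullet}_{\param,\modelerror}$. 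Next, I would identify the marginals and conditionals. By \Cref{lem_properties_of_lifted_placeholder_objects}, $\mu^{\data,\bullet}_{\param,\modelerror}=\mu^{\data,\bullet}_\param\otimes\mu_\modelerror$, so its $\param$-marginal is $\mu^{\data,\bullet}_\param$ and its regular conditional of $\modelerror$ given $\param$ is $\mu_\modelerror$ for $\mu^{\data,\bullet}_\param$-almost every $\param$. By \eqref{eq_marginal_posterior}, the $\param$-marginal of $\mu^{\data,\joint}_{\param,\modelerror}$ is $\mu^{\data,\marg}_\param$, and the assumption that $\modelerror$ takes values in a Radon space ensures existence and essential uniqueness of the regular conditional $\mu^{\data,\joint}_{\modelerror|\param}$.

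With the ingredients in hand, applying the chain rule with $P\leftarrow \mu^{\data,\joint}_{\param,\modelerror}$ and $Q\leftarrow \mu^{\data,\bullet}_{\param,\modelerror}$ gives
\begin{equation*}
 d_{\KL}(\mu^{\data,\joint}_{\param,\modelerror}\Vert\mu^{\data,\bullet}_{\param,\modelerror}) = d_{\KL}(\mu^{\data,\marg}_\param\Vert\mu^{\data,\bullet}_\param) + \int_\paramspace d_{\KL}(\mu^{\data,\joint}_{\modelerror|\param}\Vert\mu_\modelerror)\rd\mu^{\data,\marg}_\param,
\end{equation*}
and rearranging yields the first identity. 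Reversing the roles with $P\leftarrow \mu^{\data,\bullet}_{\param,\modelerror}$ and $Q\leftarrow \mu^{\data,\joint}_{\param,\modelerror}$ (permissible by the mutual equivalence established above), and using that the conditional of $\mu^{\data,\bullet}_\param\otimes\mu_\modelerror$ is again $\mu_\modelerror$, gives
\begin{equation*}
 d_{\KL}(\mu^{\data,\bullet}_{\param,\modelerror}\Vert\mu^{\data,\joint}_{\param,\modelerror}) = d_{\KL}(\mu^{\data,\bullet}_\param\Vert\mu^{\data,\marg}_\param) + \int_\paramspace d_{\KL}(\mu_\modelerror\Vert\mu^{\data,\joint}_{\modelerror|\param})\rd\mu^{\data,\bullet}_\param,
\end{equation*}
which rearranges to the second identity.

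The main obstacle is not conceptual, as the chain rule is classical; rather, it is the careful bookkeeping needed to justify all disintegration steps and to verify that the conditional KL integrands are measurable and well-defined $\mu^{\data,\marg}_\param$- and $\mu^{\data,\bullet}_\param$-almost everywhere. If one wishes to avoid invoking the chain rule as a black box, an alternative is to write the Radon--Nikodym derivative $\rd\mu^{\data,\joint}_{\param,\modelerror}/\rd\mu^{\data,\bullet}_{\param,\modelerror}$ as a ratio of normalised exponentials of $\Phi^{\data,\bullet}-\Phi^{\data,\joint}$ using \eqref{eq_joint_misfit_and_posterior}--\eqref{eq_lifted_misfit_and_posterior}, then expand $\log(\rd\mu^{\data,\joint}_{\param,\modelerror}/\rd\mu^{\data,\bullet}_{\param,\modelerror})$ and apply Fubini's theorem to split the integral into a $\param$-integral against $\mu^{\data,\marg}_\param$ and a conditional $\modelerror$-integral; this is more pedestrian but arrives at the same identities directly.
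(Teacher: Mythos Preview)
Your proposal is correct and follows essentially the same approach as the paper: apply the chain rule for KL divergence on $\paramspace\times\modelerrorspace$, then use \Cref{lem_properties_of_lifted_placeholder_objects} to identify the conditional $\mu^{\data,\bullet}_{\modelerror|\param}$ with $\mu_\modelerror$, and rearrange. Your additional justification of mutual absolute continuity via \Cref{thm_theorem11_and_proposition6_Sprungk2020} is a welcome refinement that the paper leaves implicit.
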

\begin{proof}
We use the chain rule for the KL divergence, see e.g. \cite[Exercise 3.2]{Wainwright2019}: 
 \begin{align*}
   d_{\KL }(\mu^{\data,\joint}_{\param,\modelerror}\Vert  \mu^{\data,\bullet}_{\param,\modelerror})=& d_{\KL}(\mu^{\data,\marg}_{\param}\Vert \mu^{\data,\bullet}_{\param})+ \int_{\paramspace} d_{\KL}(\mu^{\data,\joint}_{\modelerror |\param}\Vert \mu^{\data,\bullet}_{\modelerror|\param})\rd \mu^{\data,\marg}_{\param},
   \\
   d_{\KL }( \mu^{\data,\bullet}_{\param,\modelerror} \Vert \mu^{\data,\joint}_{\param,\modelerror})=& d_{\KL}(\mu^{\data,\bullet}_{\param} \Vert \mu^{\data,\marg}_{\param})+ \int_{\paramspace} d_{\KL}( \mu^{\data,\bullet}_{\modelerror|\param}\Vert \mu^{\data,\joint}_{\modelerror |\param})\rd \mu^{\data,\bullet}_{\param}.
 \end{align*}
 Recall that by \Cref{lem_properties_of_lifted_placeholder_objects}, $\mu^{\data,\bullet}_{\param,\modelerror}=\mu^{\data,\bullet}_{\param}\otimes \mu_\modelerror$, for $\bullet\in\{\appr,\best,\enh\}$.
 This implies that $ \mu^{\data,\bullet}_{\modelerror|\param}=\mu_\modelerror$ for $\bullet\in\{\appr,\best,\enh\}$.
\end{proof}

\Cref{prop_KL_divergence_marginal_posterior} implies that the KL divergence of the marginal posterior $\mu^{\data,\marg}_{\param}$ with respect to $\mu^{\data,\bullet}_{\param}$ for $\bullet\in\{\appr,\best,\enh\}$ is smaller than the KL divergence of the joint posterior $\mu^{\data,\joint}_{\param,\modelerror}$ and the corresponding lifted posterior $\mu^{\data,\bullet}_{\param,\modelerror}$.
In other words, marginalisation can only reduce the KL divergence.
As a result, the sufficient conditions for $\mu^{\data,\joint}_{\param,\modelerror}$ to agree with $\mu^{\data,\bullet}_{\param,\modelerror}$ are also sufficient conditions for $\mu^{\data,\marg}_{\param}$ to agree with $\mu^{\data,\bullet}_{\param}$, for $\bullet\in\{\appr,\best,\enh\}$.

\section{Example}
\label{sec_example}

In this section, we consider an inverse problem based on a PDE initial boundary value problem (PDE-IBVP), and use it to illustrate the positive criterion from \Cref{ssec_KL_bound_approximate_posterior} for choosing an observation operator that eliminates the effect of model error on inference.
In other words, combining this observation operator with the approximate model $\model$ yields exactly the same inference as if one had used the best model $\model^\best$ instead of $\model$.
This PDE-IBVP was discussed from the perspective of classical optimal experimental design in \cite{Alexanderian2014,Alexanderian2021}, for example.

Below, we use `$w$' as a placeholder symbol whose value may change from line to line.

\subsection{Advection-diffusion initial boundary value problem}
\label{ssec_advection_diffusion_IBVP_example}

Let $\mcal{T}\coloneqq (0,T)$ be a time interval and $\mcal{D}$ be a bounded open domain in $\bb{R}^{2}$.
Fix a constant $\kappa>0$, a vector field $v:\mcal{D}\to\bb{R}^{2}$, nonzero $s:\mcal{D}\to\bb{R}$ and nonnegative $b^{\best}:\mcal{D}\to\bb{R}$.
Define the parabolic operator $\mcal{L}\coloneqq -\kappa \Delta +v\cdot \nabla $.
For an arbitrary, sufficiently regular function $\param' :\mcal{T}\to\bb{R}$, consider the PDE-IBVP with homogeneous Neumann boundary condition and inhomogeneous initial condition
\begin{subequations}
\label{eq_PDE_IBVP_true}
 \begin{align}
    (\tderiv+\mcal{L})w(x,t)=&s(x)\param'(t), & &(x,t)\in \mcal{D}\times\mcal{T}, 
    \label{eq_PDE_true}
%     \tag{$\textup{PDE}^{\best}$}
    \\
    \nabla w(x,t)\cdot n(x)=& 0, & &(x,t)\in\partial\mcal{D}\times\closure{\mcal{T}},
    \label{eq_BC}
%     \tag{BC}
    \\
    w(x,0)=& b^{\best}(x), & &(x,0)\in \mcal{D}\times \{0\},
    \label{eq_IC_true}
%     \tag{$\textup{IC}^{\best}$}
\end{align}
\end{subequations}
where $n(x)$ in \eqref{eq_BC} denotes the unit exterior normal to $\mcal{D}$ at $x \in\partial\mcal{D}$.
We assume that a unique solution $w=w(\param')$ of \eqref{eq_PDE_IBVP_true} exists, and define the best model $\model^{\best}$ by $\model^{\best}(\param')\coloneqq w(\param')$.
For a fixed $\param^{\best}:\mcal{T}\to\bb{R}$, the true state is the function $\state^{\best}\coloneqq \model^{\best}(\param^{\best}):\mcal{D}\times\mcal{T}\to\bb{R}$.
For the theory of existence and regularity of solutions of parabolic PDE-IBVPs with Neumann boundary conditions, see e.g. \cite[Part 2, Section 10]{Friedman1969}.

We define the approximate model $\model$ in a similar way as we defined $\model^{\best}$.
For the same coefficients and $\param'$ as above, we consider the IBVP with homogeneous Neumann boundary condition and homogeneous initial condition
\begin{subequations}
\label{eq_PDE_IBVP_zero_IC}
 \begin{align}
    (\tderiv+\mcal{L})w(x,t)=&s(x)\param'(t), & & (x,t)\in \mcal{D}\times\mcal{T}, 
\label{eq_PDE_zero_IC}
    \\
    \nabla w(x,t)\cdot n(x)=& 0, & & (x,t)\in\partial\mcal{D}\times\closure{\mcal{T}},
%     \nonumber    
    \\
    w(x,0)=& 0, & &(x,0)\in \mcal{D}\times \{0\}.
    \label{eq_IC}
%     \tag{$\textup{IC}$}
\end{align}
\end{subequations}
Note that \eqref{eq_PDE_IBVP_true} and \eqref{eq_PDE_IBVP_zero_IC} differ only in their respective initial conditions.
We define $\model(\param')\coloneqq w(\param')$, for $w(\param')$ the unique solution of the IBVP \eqref{eq_PDE_IBVP_zero_IC}. 
By homogeneity of the initial condition and the boundary condition, and using the superposition principle for linear PDEs, it follows that the approximate model $\model$ is a linear function of its argument $\param'$. 

Recall from \eqref{eq_model_error_definition} that $\modelerror^{\best}\coloneqq \model^{\best}-\model$. 
Then $\modelerror^{\best}$ solves the PDE-IBVP 
\begin{subequations}
 \label{eq_PDE_IBVP_modelerror}
 \begin{align}
   (\tderiv+\mcal{L})w(x,t)=&0, &  & (x,t)\in \mcal{D}\times\mcal{T} ,
    \label{eq_PDE_delta}
%     \tag{PDE-$\modelerror^{\best}$}
    \\
    \nabla w(x,t) \cdot n(x)=& 0, & & (x,t)\in\partial\mcal{D}\times\closure{\mcal{T}},
%     \nonumber
    \\
    w(x,0)=& b^{\best}(x), & & (x,0)\in \mcal{D}\times \{0\}.
    \label{eq_PDE_IBVP_modelerror_IC}
\end{align}
\end{subequations}
Since $w$ does not depend on $\param'$, it follows that $\modelerror^{\best}$ is constant with respect to $\param'$.
Rewriting \eqref{eq_model_error_definition} as $\model^{\best}=\model+\modelerror^{\best}$, it follows that $\model^{\best}(\param^\best)$ is an affine, non-linear function of $\param'$.

Recall the decomposition $\modelerror^\best=\model^\best-\model$ in \eqref{eq_model_error_definition}.
Suppose we keep the definition of the approximate model $\model$ as the mapping that sends $\param'$ to the solution of \eqref{eq_PDE_IBVP_zero_IC}, after replacing the initial condition \eqref{eq_IC} with $w(x,0)=b(x)$ for some nonnegative, nonzero $b$.
If $b^\best-b$ is small, then one expects the corresponding model error $\modelerror^\best=\model^\best-\model$ to also be small. However, $\modelerror^\best$ is now the solution of the PDE-IBVP \eqref{eq_PDE_IBVP_modelerror}, after replacing the initial condition \eqref{eq_PDE_IBVP_modelerror_IC} with $w(x,0)=b^{\best}(x)-b(x)$. 
If $b^\best-b$ assumes negative values on $\mcal{D}$, then $\modelerror^\best$ no longer describes the concentration of a substance. This may be undesirable, given that one uses advection-diffusion PDEs to describe the spatiotemporal evolution of concentrations.
Moreover, for differential equations that model chemical reactions, negative initial values can lead to finite-time blow-up, and small negative values in solutions can lead to instability, cf. \cite[Chapter I, Section 1.1]{Hundsdorfer2003}. Since we do not have a lower bound for $b^\best$ except for $b^\best\geq 0$, this explains the choice of the initial condition $b=0$ in the PDE-IBVP \eqref{eq_PDE_IBVP_zero_IC} for defining $\model$ and $\modelerror^\best$.

\subsection{Agreement of approximate and best posteriors}
\label{ssec_agreement_with_best_posterior}

Let $\paramspace$ be $C^\infty(\closure{\mcal{T}},\bb{R}_{>0})$, equipped with the supremum norm $\Norm{\cdot}_{L^\infty}$.
Assume that the vector field $v$ in the definition of $\mcal{L}$ satisfies $v\in C^\infty(\mcal{D},\bb{R}^{2})\cap L^\infty(\mcal{D},\bb{R}^{2})$, and that both the spatial component $s$ on the right-hand side of \eqref{eq_PDE_true} and the true initial condition $b^{\best}$ in \eqref{eq_IC_true} belong to $C^\infty(\closure{\mcal{D}},\bb{R}_{>0})$.
That is, all the terms that define the IBVPs are smooth and bounded on their respective domains, with strictly positive right-hand side $s(\cdot)\theta(\cdot)$ in \eqref{eq_PDE_true} and strictly positive initial condition $b^{\best}(\cdot)$ in \eqref{eq_IC_true}.
We define the state space $\statespace$ to be 
\begin{equation*}
\statespace\coloneqq  C^{2,1}(\mcal{D}\times\mcal{T})\cap\{w\text{ satisfies \eqref{eq_BC}}\},
\end{equation*}
where for $k,j\in\bb{N}_0$, $C^{k,j}(\mcal{D}\times\mcal{T})\coloneqq \{w \ \vert\  \Norm{w}_{C^{k,j}(\mcal{D}\times\mcal{T})}<\infty\}$, and 
\begin{equation*}
 \Norm{ w}_{C^{k,j}(\mcal{D}\times\mcal{T})}\coloneqq \Norm{w}_{L^\infty}+\sum_{0< \Abs{\beta}\leq k}\Norm{\partial_x^\beta w}_{L^\infty}+\sum_{0< \alpha \leq j}\Norm{\tderiv^{\alpha} w}_{L^\infty},
\end{equation*}
$\partial_x^\beta $ denotes the spatial derivative associated to the multi-index $\beta \in \bb{N}_{0}^{2}$, and $\tderiv^\alpha$ denotes the $\alpha$-th order time derivative.
The first and second sums on the right-hand side are defined to be zero if $k=0$ and $j=0$ respectively.

Given that $\kappa>0$ is constant and $v:\mcal{D}\to\bb{R}^{2}$ is bounded, the operator $\tderiv+\mcal{L}$ satisfies 
\begin{equation*}
 \Norm{(\tderiv+\mcal{L})w}_{C^{0,0}(\mcal{D}\times\mcal{T})}\leq C\Norm{w}_{C^{2,1}(\mcal{D}\times\mcal{T})},\quad w\in\statespace,
\end{equation*}
for $C=C(\kappa,v)\in\bb{R}_{>0}$ independent of $w$. 
Thus, for arbitrary $(x^{(i)},t_i)\in \mcal{D}\times\mcal{T}$ for $i\in[n]$, the linear observation operator
\begin{equation}
 \label{eq_observation_operator_PDE_example}
 \obsoperator :\statespace\to\bb{R}^{n},\quad w\mapsto \obsoperator w\coloneqq ((\tderiv+\mcal{L})w(x^{(i)},t_i))_{i\in[n]}
\end{equation}
is continuous, since $\Norm{\obsoperator w}_{\bb{R}^{n}}\leq n \Norm{(\tderiv+\mcal{L})w}_{C^{0,0}(\mcal{D}\times\mcal{T})}$.
Moreover, for $\obsoperator$ given by \eqref{eq_observation_operator_PDE_example}, it follows from \eqref{eq_PDE_delta} that $\{w\ \vert\ w\text{ satisfies \eqref{eq_PDE_delta}}\}\subset \kernel(\obsoperator)$, since $\obsoperator$ is the composition of the vectorisation of the evaluation functionals at $(x^{(i)},t_i)_{i\in[n]}$ with the differential operator on the left-hand side of \eqref{eq_PDE_delta}.
Thus, given the definition of $\obsoperator$ above and the definition of $\modelerror^{\best}$ in \Cref{ssec_advection_diffusion_IBVP_example} as the solution of the IBVP defined by \eqref{eq_PDE_IBVP_modelerror}, we have
\begin{equation*}
 \obsoperator   \modelerror^{\best}(\param')=0,\quad \forall\param'\in\paramspace.
\end{equation*}
In particular, the right-hand side of \eqref{eq_L1error_best_misfit_approx_misfit} in \Cref{lem_L1error_best_misfit_approx_misfit} vanishes for this choice of $\obsoperator$.
By \Cref{prop_KL_divergence_best_posterior_approx_posterior}, it follows that $\mu^{\data,\appr}_{\param}=\mu^{\data,\best}_{\param}$, i.e. the approximate posterior and the best posterior agree.

In the discussion after \Cref{prop_KL_divergence_best_posterior_enhanced_noise_posterior}, we showed that if the image of $\paramspace$ under $\modelerror^{\best}$ is contained in some linear subspace $V$ of $\statespace$, then it is simpler and easier to use the approximate posterior $\mu^{\data,\appr}_{\param}$ with an appropriately chosen $\obsoperator$ instead of the enhanced noise posterior $\mu^{\data,\enh}_{\param}$. 
Since $\modelerror^{\best}$ takes values in the linear subspace $\{w\ \vert\ w\text{ satisfies \eqref{eq_PDE_delta}}\}$ of $\statespace$, the same conclusion holds for this example, with the appropriate choice of $\obsoperator$ given in \eqref{eq_observation_operator_PDE_example}.
Similarly, by the discussion after \Cref{prop_KL_divergence_best_posterior_joint_posterior}, it is simpler and easier to use the approximate posterior with $\obsoperator$ from \eqref{eq_observation_operator_PDE_example} instead of the joint posterior.

In this example, the only information about the best model error $\modelerror^{\best}$ that we used to find an observation operator $\obsoperator$ that satisfies $\mu_{\param}(\obsoperator   \modelerror^{\best}=0)=1$ is contained in the PDE \eqref{eq_PDE_delta}.
We exploited this information to choose the observation operator in \eqref{eq_observation_operator_PDE_example}.
In the absence of additional conditions on $\modelerror^{\best}$, an observation operator that does not satisfy \eqref{eq_observation_operator_PDE_example} may fail to satisfy the condition $\mu_{\param}(\obsoperator   \modelerror^{\best}=0)=1$.
 Thus, we can consider the collection of observation operators with the structure \eqref{eq_observation_operator_PDE_example} as being the largest possible collection, given the available information.

We end this section with some important remarks. 

Suppose $w$ solves \eqref{eq_PDE_IBVP_true} for a given $\param'$.
Then the operator $\obsoperator$ in \eqref{eq_observation_operator_PDE_example} satisfies $\obsoperator w=(s(x^{(i)})\param'(t_i))_{i\in[n]}$.
In particular, if the spatial function $s$ is known and nonzero, then one can obtain the numbers $(\param'(t_i))_{i\in[n]}$ from the vector $\obsoperator w\in \bb{R}^{n}$ directly, by division.
We will return to this observation in \Cref{ssec_behaviour_of_best_posterior_mean_in_small_noise_limit}.

For the operator $\obsoperator$ in \eqref{eq_observation_operator_PDE_example}, exact evaluation of the vector $\obsoperator w$ for any $w\in \statespace$ in practical experimental settings can be difficult. 
For example, in practical settings, one might only be able to measure the concentration of a certain substance at fixed spatial locations $(x^{(j)})_{j\in [J]}\subset\mcal{D}$ and at multiples $k\Delta t \in\mcal{T} $ of some time lag $\Delta t>0$, $k\in [K]$, $K\in\bb{N}$. The corresponding observation operator maps every state $w\in \statespace $ to a vector $(w(x^{(j)},k \Delta t))_{j \in [J], k\in[K]} \in\bb{R}^{JK}$ of concentrations.
We consider one such operator in \eqref{eq_basic_observation_operator} below. 
In this case, and in the absence of further conditions on the spatiotemporal locations $(x^{(j)},k \Delta t)_{j\in[J],k\in[K]}$, there is no guarantee that one can approximate the operator $\obsoperator$ from \eqref{eq_observation_operator_PDE_example}.
However, if the spatiotemporal locations lie on a finite difference stencil, then one could potentially combine the information in the vector $(w(x^{(j)},k \Delta t))_{j \in [J], k\in[K]} \in\bb{R}^{JK}$ to approximate $\obsoperator$ from \eqref{eq_observation_operator_PDE_example}. 
Such approximations may exhibit errors associated to the finite difference scheme, and may also be susceptible to noise in the vector $(w(x^{(j)},k \Delta t))_{j \in [J], k\in[K]} \in\bb{R}^{JK}$.
We postpone the design and analysis of approximations of $\obsoperator$ in \eqref{eq_observation_operator_PDE_example} for future work.

In order to identify $\obsoperator$ in \eqref{eq_observation_operator_PDE_example} as an observation operator that satisfies $\obsoperator \modelerror^\best=0$, we exploited the fact that the right-hand side of \eqref{eq_PDE_delta} vanishes. 
Suppose that we modified the right-hand sides of \eqref{eq_PDE_zero_IC} and \eqref{eq_PDE_delta} to be nonzero functions $h,h^\best:\mcal{D}\times\mcal{T}\to\bb{R}$ respectively, where for some fixed $0<\lambda<1$ and any $\param'\in\paramspace$, $h(x,t)\coloneqq \lambda s(x)\param'(t)$ and $h^\best\coloneqq (1-\lambda) s(x)\param'(t)$, for every $(x,t)\in\mcal{D}\times\mcal{T}$.
If we do not know the initial condition $b^\best$ in \eqref{eq_PDE_IBVP_modelerror_IC}, then the only remaining information that we can use to find a linear operator $\obsoperator$ such that $\obsoperator \modelerror^\best=0$ is captured by the homogeneous Neumann boundary conditions in \eqref{eq_PDE_IBVP_modelerror}. 
Since the PDE-IBVP \eqref{eq_PDE_IBVP_true} that defines $\model^\best$ uses the same boundary conditions, any observation operator based on only the Neumann boundary conditions will imply that $\obsoperator\modelerror^\best=\obsoperator \model^\best$ as maps on $\paramspace$.
Thus, if $\obsoperator\modelerror^\best=0$, then also $\obsoperator \model^\best=0$. This example illustrates that it may not always be possible to find an observation operator $\obsoperator$ that satisfies $\obsoperator \modelerror^\best=0$ and that yields informative observations.

\section{Numerical simulations}
\label{sec_numerical_simulations}

In this section, we describe some numerical simulations of the PDE-IBVP from \Cref{ssec_advection_diffusion_IBVP_example} for a specific vector field $v$. We use these simulations to illustrate the behaviour of the best posterior $\mu^{\data,\best}_{\param}$ and the approximate posterior $\mu^{\data,\appr}_{\param}$ that were defined in \eqref{eq_best_misfit_and_posterior} and \eqref{eq_approx_misfit_and_posterior} respectively.
The code for these simulations is available at \url{https://gitlab.tue.nl/hbansal/model-error-study}.

\subsection{Setup of PDE-IBVP and Bayesian inverse problem}

\subsubsection{Setup of PDE-IBVP}
\label{ssec_setup_PDE_IBVP}

We set the spatial domain $\mcal{D}$ in \eqref{eq_PDE_IBVP_true} to be $\mcal{D}=[0,1]^2\setminus(C_1\cup C_2)$, where $C_1$ and $C_2$ represent two obstacles.
See \Cref{fig_sensors_source_obstacles}.
For the differential operator $\mcal{L}=-\kappa \Delta+v\cdot\nabla$ in \Cref{ssec_advection_diffusion_IBVP_example}, we use a constant diffusivity $\kappa=0.05$.
For the vector field $v$ we use the solution to the stationary Navier--Stokes equation 
\begin{equation}
 \label{eq_vector_field}
 \begin{aligned}
  -\frac{1}{\textup{Re}}\Delta v+\nabla q+ v\cdot \nabla v=&0 \quad \text{in }\mcal{D}
  \\
  \nabla\cdot v =&0 \quad \text{in }\mcal{D}
  \\
  v=& g \quad \text{on }\partial\mcal{D},
 \end{aligned}
\end{equation}
for a given pressure field $q$, Reynolds number $\textup{Re}=50$, and boundary data 
\begin{equation*}
 g:\partial \mcal{D}\to \bb{R}^{2},\quad g\tderiv(x_1,x_2)=
 \begin{cases}
  (0,1) & x_1=0,\ 0<x_2<1
  \\
  (0,-1) & x_1=1,\ 0<x_2<1 
  \\
  (0,0) & \text{otherwise},
 \end{cases}
\end{equation*}
see \cite[Section 5]{Alexanderian2014}.
In \Cref{fig_vector_field}, we plot a numerical approximation $v$ of the solution to \eqref{eq_vector_field}.
The numerical approximation satisfies $\Norm{v}_{L^\infty}=1$.
The boundary data ensure that the inner product of the vector field $v$ with the exterior normal vanishes everywhere on the boundary $\partial\mcal{D}$.
Given the Neumann boundary condition \eqref{eq_BC}, one can use this fact to prove the coercivity of the bilinear form associated to \eqref{eq_PDE_true}; see e.g. \cite[Section 3.2]{Elman2014} for the proof of this statement for the time-independent version of \eqref{eq_PDE_true}--\eqref{eq_IC_true}.
The coercivity of the bilinear form ensures the existence of a weak solution to all the PDE-IBVPs from \Cref{ssec_advection_diffusion_IBVP_example}.
Given $\kappa=0.05$, the global P\'{e}clet number --- see e.g. \cite[Section 13.2, Eq. (13.19)]{Quarteroni2017} --- associated to the PDE-IBVP has the value $\Norm{v}_{L^\infty(\mcal{D})} /(2\kappa)=10$.
\begin{figure}
    \centering
    \begin{subfigure}[b]{0.475\textwidth}
        \includegraphics[width=\textwidth]{./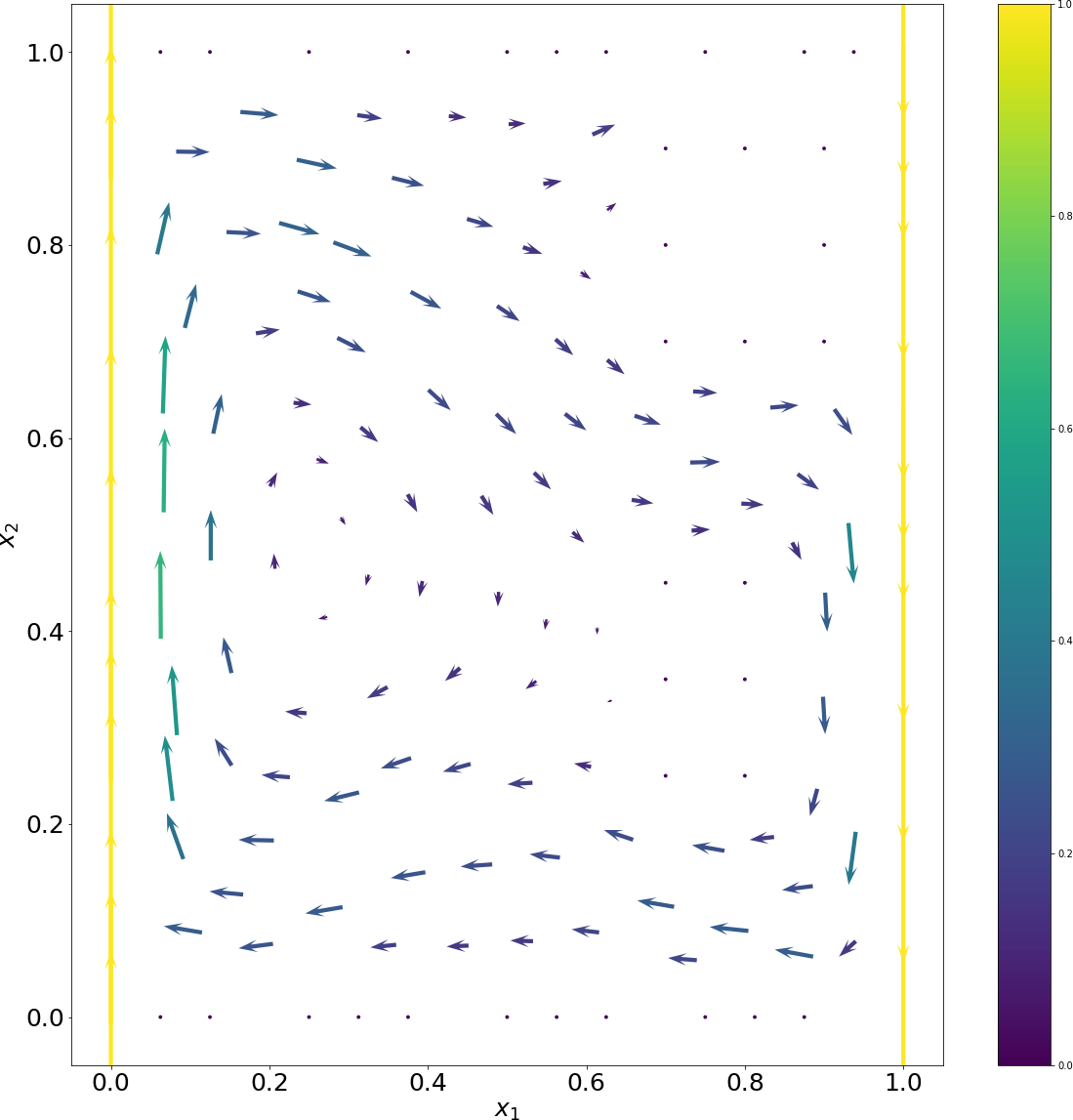}
        \caption{Vector field $v$ in \eqref{eq_PDE_true} and \eqref{eq_vector_field}. 
        \\
        Colour bar shows value of $\Norm{v(x_1,x_2)}_{2}\in [0,1]$.}
        \label{fig_vector_field}
    \end{subfigure}
    \hfill
    \begin{subfigure}[b]{0.475\textwidth}
        \includegraphics[width=\textwidth]{./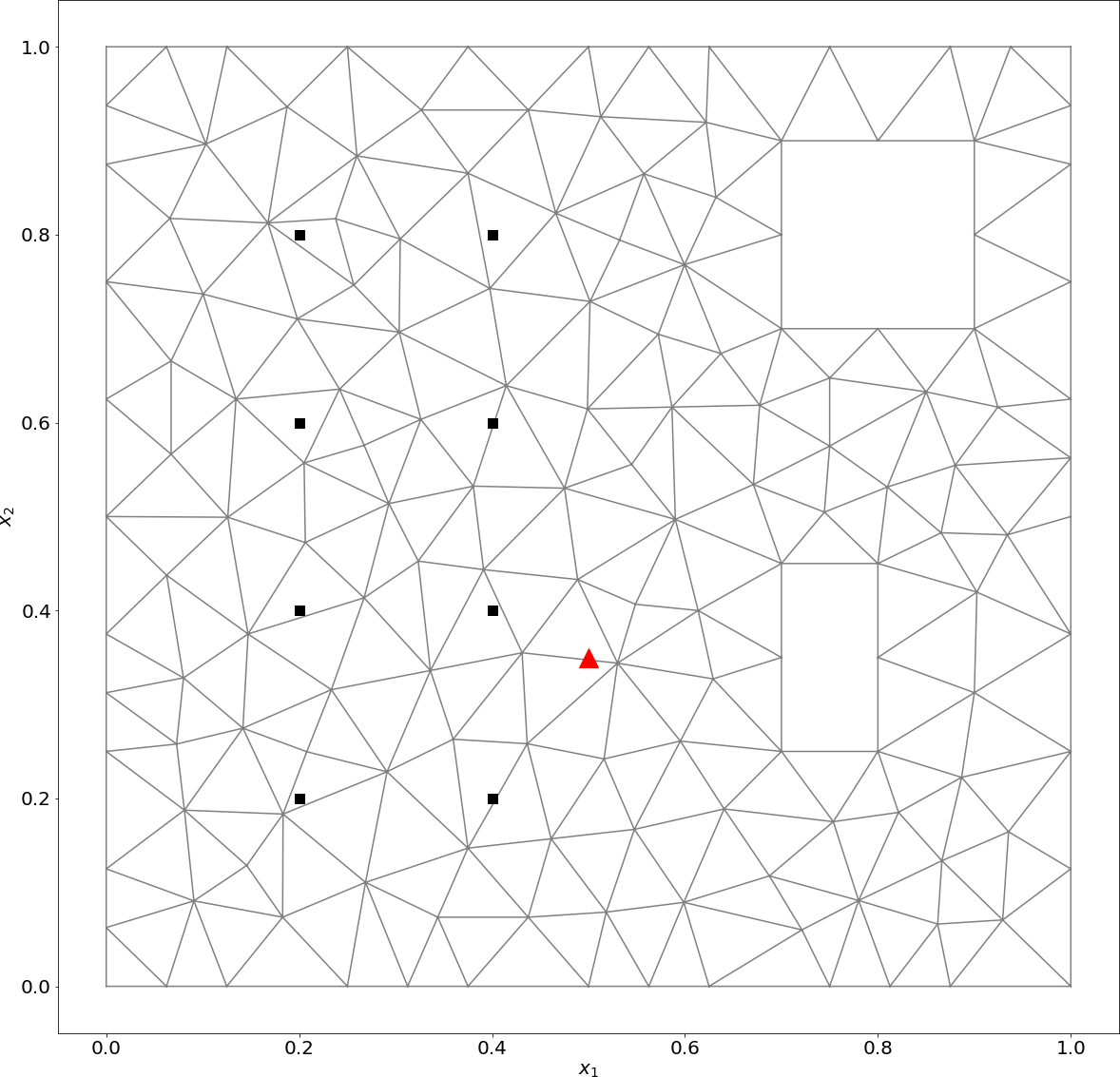}
        \caption{Location $x^{(0)}$ of source $s$ in \eqref{eq_PDE_true} and \eqref{eq_source_term_spatial_component} (\textcolor{red}{$\blacktriangle$}), sensor locations ($\blacksquare$), and obstacles.}
        \label{fig_sensors_source_obstacles}
    \end{subfigure}
    \caption{Plots of vector field, sensors, source term, and obstacles.}
    \label{fig_vector_field_sensors_source_obstacles}
\end{figure}

Next, we define 
\begin{subequations}
\begin{align}
  s(x)\coloneqq& \frac{1}{2\pi L}\exp\left(-\frac{1}{2L}\Abs{x-x^{(0)}}^2\right),
  \label{eq_source_term_spatial_component}
  \\
 \param^{\best}(t)\coloneqq &
 \begin{cases}
 80 & t\in [0,0.2]
 \\
 80+20\exp\left(1-\left(1-16(t-0.45)^2\right)^{-1}\right) & t\in (0.2,0.45]
 \\
 50+50\exp\left(1-\left(1-16(t-0.45)^2\right)^{-1}\right) & t\in (0.45,0.7]
 \\
 50 & t\in (0.7,1.0],
 \end{cases}
\label{eq_source_term_temporal_component}
\end{align}  
 \end{subequations}
and set $L=0.05$ and $x^{(0)}=(0.5,0.35)$.
We will use these functions as reference values for the right-hand side of \eqref{eq_PDE_true}.
We indicate the location parameter $x^{(0)}$ of the spatial function $s$ by a red triangle in \Cref{fig_sensors_source_obstacles} and plot a discrete approximation $\widetilde{\param}^{\best}$ of $\param^{\best}$ in \Cref{fig_BestPosteriorMeans_HighSNR}.

To complete the specification of the PDE-IBVP \eqref{eq_PDE_IBVP_true}, it remains to state the reference initial condition $b^{\best}$ in \eqref{eq_IC_true}.
To this end, we used a sample from the Gaussian distribution with constant mean $m_b\equiv 50$ on $\mcal{D}$ and covariance operator $(-\epsilon\Delta +\alpha I)^{-2}$ for $\epsilon=4.5\times 10^{-3}$ and $\alpha=2.2\times 10^{-1}$ equipped with Robin boundary conditions.
This Gaussian distribution and the justification for its use are given in \cite[Section 5.2]{Alexanderian2021}.
In \Cref{fig_evolutions_for_two_random_initial_conditions}, we show snapshots of the solution $w$ of \eqref{eq_PDE_true}--\eqref{eq_IC_true} at times $t\in\{0,0.2,0.3,0.7,1.0\}$, for two samples from the Gaussian distribution of $b^{\best}$.
For the reference initial condition $b^{\best}$, we used the initial condition shown in the bottom left subfigure of \Cref{fig_evolutions_for_two_random_initial_conditions}.
\begin{figure}
    \centering
    \includegraphics[width=\textwidth]{./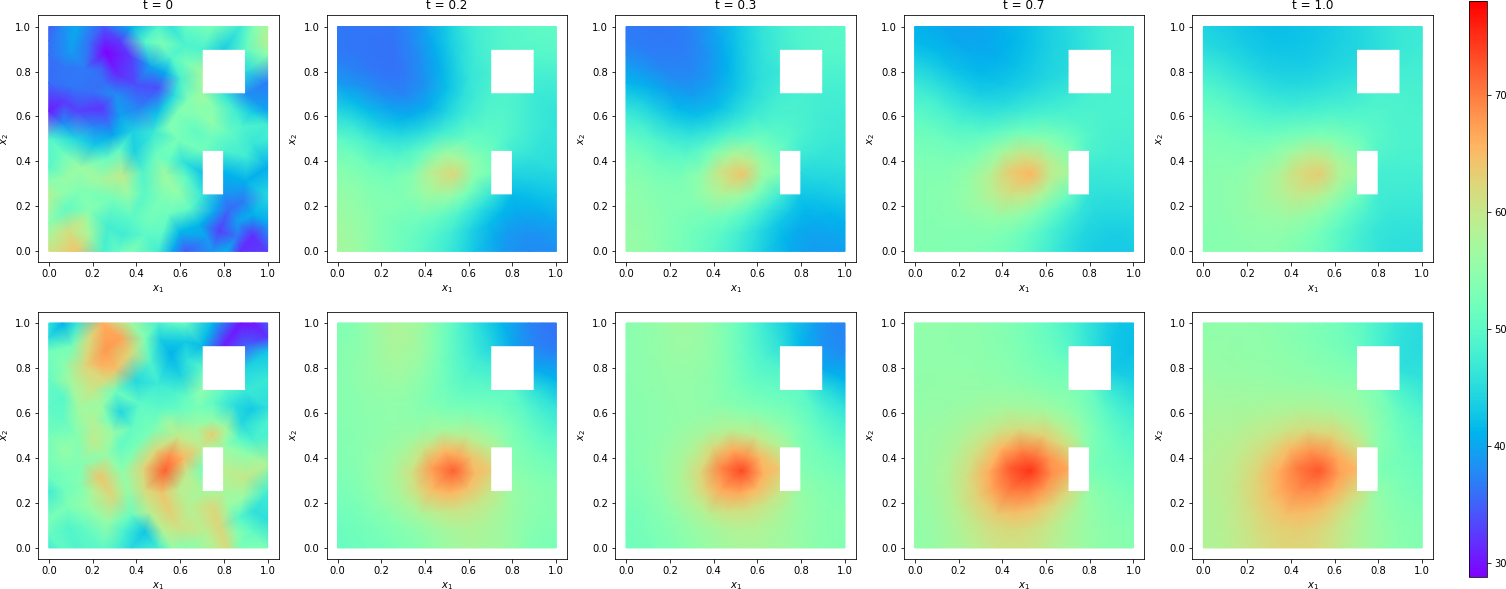}
        \caption{Snapshots of the solution $w$ of \eqref{eq_PDE_true}--\eqref{eq_IC_true} at $t\in\{0,0.2,0.3,0.7,1.0\}$, for two random initial conditions $b^\best$.
        The reference initial condition $b^{\best}$ that was used for generating observations is shown in the bottom left subfigure.}
        \label{fig_evolutions_for_two_random_initial_conditions}
\end{figure}

\subsubsection{Setup of Bayesian inverse problem}
\label{ssec_setup_Bayesian_inverse_problem}

Consider two different observation operators
\begin{subequations}
\label{eq_observation_operators_for_numerical_results}
 \begin{align}
  \label{eq_basic_observation_operator}
  \obsoperator_{B}:\statespace &\to  \bb{R}^{n},\quad \state \mapsto (\state (x^{(i)},t^{(i)}))_{i=1}^{n}
  \\
  \label{eq_differential_observation_operator}
 \obsoperator_{D}:\statespace & \to\bb{R}^{n},\quad \state \mapsto ((\tderiv \state (x^{(i)},t^{(i)})+\mcal{L}\state (x^{(i)},t^{(i)}))_{i=1}^{n},
 \end{align}
\end{subequations}
for $\mcal{L}$ given in \Cref{ssec_advection_diffusion_IBVP_example}.
We refer to $\obsoperator_B$ and $\obsoperator_D$ as the `basic observation operator' and `PDE observation operator' respectively.
To ensure that the numerical results do not depend on the choice of spatiotemporal locations at which observations are collected, we define $\obsoperator_B$ and $\obsoperator_D$ using the same spatiotemporal locations $(x^{(i)},t_i)_{i=1}^{n}$ belonging to the set
 \begin{equation}
 \label{eq_spatiotemporal_locations_of_observations}
  \left\{(x_1,x_2)=(0.2m,0.2n)\in \mcal{D}\ :\ m\in[2],\ n\in [4]\right\}\times \left\{t=0.1+ 0.01\ell\in\mcal{T}\ :\ \ell\in[80]_0\right\}.
 \end{equation}
Thus, for both $\obsoperator_B$ and $\obsoperator_D$, observations are vectors with $n=8\times 81=648$ entries.
In \Cref{fig_sensors_source_obstacles}, we indicate the spatial locations $(x^{(i)})_{i}$ by black squares.
We emphasise that, unlike `classical' experimental design, our goal is not to describe how to choose the spatiotemporal locations of the measurements in order to maximise the information gain for each successive measurement.
Instead, our goal is to show that by choosing a different \emph{type} of measurement, one can mitigate the effect of model error on Bayesian inference.
In this context, the key difference between $\obsoperator_B$ and $\obsoperator_D$ is that $\obsoperator_D\modelerror^{\best}$ vanishes on $\mcal{D}\times\mcal{T}$ because of \eqref{eq_PDE_delta}, but $\obsoperator_B \modelerror^{\best}$ does not.

For the prior $\mu_\param$ on the unknown $\param^{\best}$, we use the same prior as in \cite[Section 5.2]{Alexanderian2021}, namely a Gaussian probability measure on  $L^2(\mcal{T})$ with constant mean $m_\param\equiv 65$ and covariance operator $\Sigma_\param :L^2(\mcal{T})\to L^2(\mcal{T})$ generated by a Mat\'{e}rn covariance kernel
\begin{equation*}
\Sigma_\param z(s)\coloneqq \int_{\mcal{T}} c(s,t')z(t')\rd t',\quad
 c(s,t)\coloneqq  \sigma^2 \left(1+\frac{\sqrt{3}\Abs{s-t}}{\ell}\right)\exp\left(-\frac{\sqrt{3}\Abs{s-t}}{\ell}\right),\quad s,t\in\mcal{T}
\end{equation*}
with $\sigma=80$ and length scale $\ell=0.17$.

Fix $\obsoperator\in\{\obsoperator_B,\obsoperator_D\}$ and a realisation $\data$ of the corresponding random variable $\dataRV$ in \eqref{eq_true_data_for_numerical_simulations}.
In \Cref{ssec_advection_diffusion_IBVP_example}, we observed that $\param'\mapsto \model(\param')$ is a linear mapping.
This implies that the approximate can posterior $\mu^{\data,\appr}_{\param}$ is Gaussian.
Given that we assume $\noise\sim\normaldist{0}{\Sigma_\noise}$ and $\param\sim\mu_\param=\normaldist{m_\param}{\Sigma_\param}$ to be statistically independent, it follows that the covariance of $\obsoperator\model\param+\noise$ and $\param$ is $\obsoperator\model\Sigma_\param$, and the covariance matrix of $\obsoperator\model\param+\noise$ is $(\obsoperator\model) \Sigma_\param (\obsoperator\model)^\ast+\Sigma_\noise$.
We then compute the mean and covariance of $\mu^{\data,\appr}_{\param}$ using the formula for conditioning of Gaussians, see e.g. \cite[Theorem 6.20]{Stuart2010}:
\begin{subequations}
 \label{eq_approximate_posterior_linear_Gaussian_case}
 \begin{align}
 m^{\data,\appr}_{\param}=&m_\param+(\obsoperator \model \Sigma_\param)^\ast\left((\obsoperator \model ) \Sigma_\param (\obsoperator \model)^\ast+\Sigma_\noise \right)^{-1}(\data-\obsoperator \model m_\param)
 \label{eq_approximate_posterior_mean_linear_Gaussian_case}
 \\
 \Sigma^{\data,\appr}_{\param}=&\Sigma_\param- (\obsoperator \model \Sigma_\param)^\ast \left((\obsoperator \model ) \Sigma_\param (\obsoperator \model )^\ast+\Sigma_\noise \right)^{-1}(\obsoperator \model \Sigma_\param).
 \label{eq_approximate_posterior_covariance_linear_Gaussian_case}
\end{align}
\end{subequations}
In \Cref{ssec_advection_diffusion_IBVP_example}, we observed that $\modelerror^{\best}$ is constant with respect to $\param'$, and that $\param'\mapsto \model^\best\param'=\model\param'+\modelerror^\best$ is affine.
Hence, the covariance of $\obsoperator\model^\best\param+\noise$ and $\param\sim \mu_\param$ is $\obsoperator\model\Sigma_\param$, the covariance matrix of $\obsoperator\model^\best\param+\noise$ is $(\obsoperator\model) \Sigma_\param (\obsoperator\model)^\ast+\Sigma_\noise$, and $\mu^{\data,\best}_{\param}$ is Gaussian, with mean and covariance 
\begin{subequations}
 \label{eq_best_posterior_linear_Gaussian_case}
 \begin{align}
 m^{\data,\best}_{\param}=&m_\param+(\obsoperator \model \Sigma_\param)^\ast\left((\obsoperator \model ) \Sigma_\param (\obsoperator \model)^\ast+\Sigma_\noise \right)^{-1}(\data-\obsoperator \model m_\param-\obsoperator \modelerror^{\best})
 \label{eq_best_posterior_mean_linear_Gaussian_case}
 \\
 \Sigma^{\data,\best}_{\param}=&\Sigma_\param- (\obsoperator \model \Sigma_\param)^\ast \left((\obsoperator \model ) \Sigma_\param (\obsoperator \model )^\ast+\Sigma_\noise \right)^{-1}(\obsoperator \model \Sigma_\param).
 \label{eq_best_posterior_covariance_linear_Gaussian_case}
\end{align}
\end{subequations}
By comparing \eqref{eq_approximate_posterior_linear_Gaussian_case} and \eqref{eq_best_posterior_linear_Gaussian_case}, we conclude that the posteriors $\mu^{\data,\appr}_{\param}$ and $\mu^{\data,\best}_{\param}$ differ only in their means, namely by the shift $(\obsoperator \model \Sigma_\param)^\ast\left((\obsoperator \model )^\ast \Sigma_\param (\obsoperator \model)+\Sigma_\noise \right)^{-1} \obsoperator\modelerror^{\best}$.
Thus, if one chooses $\obsoperator$ so that $\obsoperator\modelerror^{\best}=0$, then $\mu^{\data,\best}_{\param}$ and $\mu^{\data,\appr}_{\param}$ agree.
This result is consistent with \Cref{prop_KL_divergence_best_posterior_approx_posterior}.
It has the interpretation that if one uses the observation operator $\obsoperator_D$ to map states to data, then using the approximate model $\model$ to map parameters to states yields exactly the same inference as when one uses the best model $\model^\best$.

\subsection{Implementation}
\label{ssec_implementation}

To solve the IBVPs \eqref{eq_PDE_IBVP_true}, \eqref{eq_PDE_IBVP_zero_IC}, and \eqref{eq_PDE_IBVP_modelerror}, we implemented the space-time finite element method on an unstructured triangular mesh for the spatial domain $\mcal{D}$ and a uniform mesh for the temporal domain $\mcal{T}$, using the hIPPYlib \cite{VillaPetraGhattas21} and FEniCS/DOLFIN \cite{Logg2012} software.
We used Lagrange nodal basis functions to discretise functions. 
That is, given a finite element mesh of $\mcal{D}$ with nodes $(x_i)_{i\in[N_D]}$ and given a finite element mesh of $\mcal{T}$ with nodes $(t_j)_{j\in[N_T]}$, we have a spatial basis $(\varphi_i)_{i\in[N_D]}\subset L^2(\mcal{D})$ of functions and a temporal basis $(\psi_j)_{j\in[N_T]}\subset L^2(\mcal{T})$ such that $\varphi_i(x_k)=\delta_{ik}$ for $i,k\in [N_D]$ and $\psi_j(t_\ell)=\delta_{j\ell}$ for $j,\ell\in [N_T]$.
This yields a space-time basis $(d_k)_{k\in[N_DN_T]}$, where each $d_k=\varphi_{i(k)}\psi_{j(k)}$ belongs to $L^2(\mcal{D}\times\mcal{T})$.
We used piecewise linear basis functions for all functions except for the advection vector field $v$, where we used piecewise quadratic functions.

To compute the numerical approximation $\mathbf{w}\in\bb{R}^{N_DN_T}$ of the solution $w$ of the PDE-IBVP \eqref{eq_PDE_IBVP_true} for a given $\param'$ on the right-hand side of \eqref{eq_PDE_true} and a given $b^{\best}$ in \eqref{eq_IC_true}, we used the weak formulation of \eqref{eq_PDE_IBVP_true} to form a matrix-vector equation $\mathbf{A}\mathbf{\state}=\mathbf{f}$, where
\begin{equation}
 \label{eq_matrix_vector_for_weak_formulation}
 (\mathbf{A})_{k_1,k_2}\coloneqq \Ang{(\tderiv+\mcal{L})d_{k_1},d_{k_2}}_{L^2(\mcal{D}\times\mcal{T})},\quad (\mathbf{f})_{k_2}\coloneqq \Ang{s\param',d_{k_2}}_{L^2(\mcal{D}\times\mcal{T})},\quad k_1,k_2\in[N_DN_T].
\end{equation}
We used quadrature to approximate the $L^2(\mcal{D}\times\mcal{T})$ inner product.
For the initial condition $b^{\best}$, we discretised the covariance operator $(-\epsilon\Delta+\alpha I)^{-2}$ described in \Cref{ssec_setup_PDE_IBVP} and sampled from the resulting finite-dimensional Gaussian distribution on $\bb{R}^{N_D}$. 

For the spatial discretisation, 262 elements were used, yielding $N_D=159$ spatial degrees of freedom and a maximal spatial element diameter of $\Delta x\approx 0.13$.
For the temporal discretisation, 100 elements of uniform diameter $\Delta t=0.01$ were used, yielding $N_T=101$ temporal degrees of freedom.
Since $v$ in \eqref{eq_vector_field} satisfies $\Norm{v}_{L^\infty(\mcal{D})}=1$, the local P\'{e}clet number is $\Norm{v}_{L^\infty(\mcal{D})}\Delta x/(2\kappa)=1.3$; see e.g. \cite[Section 13.2, Equation (13.22)]{Quarteroni2017}.

The discretisations of $m^{\data,\appr}_{\param}$, $m^{\data,\best}_{\param}$, and $\Sigma^{\data,\appr}_{\param}=\Sigma^{\data,\best}_{\param}$ from \eqref{eq_approximate_posterior_linear_Gaussian_case} and \eqref{eq_best_posterior_linear_Gaussian_case} are
\begin{subequations}
\label{eq_discrete_representation_posteriors}
 \begin{align}
\mathbf{m}^{\data,\appr}_{\param} =& \mathbf{m}_{\param} + (\mathbf{OM\Sigma}_{\param})^\ast((\mathbf{OM})\mathbf{\Sigma}_{\param}(\mathbf{OM})^\ast + \Sigma_{\noise})^{-1}(\data-\mathbf{OMm}_{\param})    
\label{eq_discrete_representation_approximate_posterior_mean}
\\
\mathbf{m}^{\data,\best}_{\param} =& \mathbf{m}_{\param} + (\mathbf{OM\Sigma}_{\param})^\ast((\mathbf{OM})\mathbf{\Sigma}_{\param}(\mathbf{OM})^\ast + \Sigma_{\noise})^{-1}(\data-\mathbf{OMm}_{\param}-\mathbf{O}\widetilde{\modelerror}^{\best})    
  \label{eq_discrete_representation_best_posterior_mean}
\\
    \mathbf{\Sigma}^{\data,\appr}_{\param}=\mathbf{\Sigma}^{\data,\best}_{\param} =& \mathbf{\Sigma}_{\param} - (\mathbf{OM\Sigma}_{\param})^\ast((\mathbf{OM})\mathbf{\Sigma}_{\param}(\mathbf{OM})^\ast + \Sigma_{\noise})^{-1}\mathbf{OM\Sigma}_{\param}
    \nonumber
    \\
    =&\mathbf{\Sigma}_{\param}^{1/2}\left(I - (\mathbf{OM}\mathbf{\Sigma}_{\param}^{1/2})^\ast((\mathbf{OM})\mathbf{\Sigma}_{\param}(\mathbf{OM})^\ast + \Sigma_{\noise})^{-1}\mathbf{OM}\mathbf{\Sigma}_{\param}^{1/2}\right)\mathbf{\Sigma}_{\param}^{1/2},
    \label{eq_discrete_representation_approximate_posterior_covariance}
 \end{align}
\end{subequations}
where $\mathbf{\Sigma}_{\param}^{1/2}$ in \eqref{eq_discrete_representation_approximate_posterior_covariance} denotes the symmetric square root of $\mathbf{\Sigma}_{\param}$.
Since $m_\param\equiv 65$, we set $\mathbf{m}_{\param}=(65,\ldots,65)\in\bb{R}^{N_T}$.
For the prior covariance matrix $\mathbf{\Sigma}_\param \in \bb{R}^{N_T\times N_T}$, we set $(\mathbf{\Sigma}_\param)_{i,j}\coloneqq \int_{\mcal{T}} c(t_i,t')\psi_j(t')\rd t'$ and approximated the integral by quadrature.
We discretised the approximate model $\model$ as a matrix $\mathbf{M}\in \bb{R}^{N_DN_T\times N_T}$, where we computed the $j$-th column of $\mathbf{M}$ for $j\in[N_T]$ by replacing $\param'$ in the linear form associated to \eqref{eq_PDE_true} with the temporal basis function $\psi_j$ in the definition of the right-hand side $\mathbf{f}$; see \eqref{eq_matrix_vector_for_weak_formulation}.
We discretised the basic observation operator $\obsoperator_{B}$ in \eqref{eq_basic_observation_operator} and the PDE observation operator $\obsoperator_{D}$ in \eqref{eq_differential_observation_operator} as $n\times N_DN_T$ matrices $\mathbf{O}_{B}$ and $\mathbf{O}_D$ respectively, where $\mathbf{O}_{B}\mathbf{u}$ and $\mathbf{O}_D \mathbf{u}$ return the values of the piecewise linear spatiotemporal functions at the locations given in \eqref{eq_spatiotemporal_locations_of_observations} represented by $\mathbf{u}$ and $\mathbf{A\state}$ respectively.
To evaluate functions off the finite element nodes, we used linear interpolation.

It remains to state how we generated the data $y$ used in \eqref{eq_discrete_representation_approximate_posterior_mean} and \eqref{eq_discrete_representation_best_posterior_mean}.
Let $\widetilde{\param}^\best\in\bb{R}^{N_T}$ denote the discretisation of the truth $\param^{\best}$ using the temporal basis $(\psi_j)_j$.
We solved the matrix-vector equation $\mathbf{A}\widetilde{\modelerror}^{\best}=0$ that corresponds to \eqref{eq_PDE_IBVP_modelerror} for $\widetilde{\modelerror}^{\best}\in\bb{R}^{N_DN_T}$ and defined the discretisation of the true state to be $\mathbf{u}^{\best}\coloneqq \mathbf{M} \widetilde{\param}^{\best}+\widetilde{\modelerror}^{\best}$.
For each of the choices $\mathbf{O}=\mathbf{O}_B$ and $\mathbf{O}=\mathbf{O}_D$, we generated data by sampling the random variable 
\begin{equation}
 \label{eq_true_data_for_numerical_simulations}
 \dataRV=\mathbf{O} \mathbf{\state}^{\best}+\noise,\quad \noise \sim \normaldist{0}{\Sigma_\noise}
\end{equation}
for $\Sigma_\noise\coloneqq \sigma_\noise^2 I$, where we set the standard deviation $\sigma_\noise$ to be the product of a prescribed signal-to-noise ratio (SNR) with the median of the underlying signal $\mathbf{O}\mathbf{\state}^{\best}$.

For our experiments, we considered a lower SNR and a higher SNR, where we set $\sigma_\noise$ to be the median of the underlying signal, scaled by $\mfrak{s}=0.1$ and $\mfrak{s}=0.02$ respectively.
Our SNR regimes are reasonable, given that in \cite[Section 5.3]{Alexanderian2021} the value $\sigma_\noise=0.25$ is used for signals whose entries take values in the interval $[51,54]$, for a corresponding $\mfrak{s}$ that is at most $0.004$.

The minimum, median, and maximum of the signal $\mathbf{O}_B\mathbf{\state}^{\best}$ was 52.18, 56.63, and 68.53 respectively, which is consistent with the figures in the bottom row of \Cref{fig_evolutions_for_two_random_initial_conditions} and yields $\sigma_\noise\approx 5.66$ and $\sigma_\noise\approx 1.13$ for the lower SNR and higher SNR cases respectively.
In contrast, the minimum, median, and maximum of the signal $\mathbf{O}_D\mathbf{\state}^{\best}$ was 0, $2\times 10^{-6}$, and $3.86\times 10^{-3 }$ respectively.
This is consistent with the fact that $\mathbf{O}_D\mathbf{\state}^{\best}$ returns the function represented by the vector $\mathbf{f}$ in \eqref{eq_matrix_vector_for_weak_formulation}: the entries of $\mathbf{f}$ are given by integrals of the function $s(x)$ that decreases exponentially as $\Abs{x-x^{(0)}}^{2}$ increases, and the integration regions have space-time `volume' proportional to $(\Delta x)^2 \Delta t\approx 10^{-3}$. 
Thus, for $\mathbf{O}_D$, $\sigma_\noise \approx 2\times 10^{-7}$ and $\sigma_\noise \approx 4\times 10^{-8}$ in the lower SNR and higher SNR cases respectively.

\subsection{Behaviour of discretised best posterior mean in small noise limit}
\label{ssec_behaviour_of_best_posterior_mean_in_small_noise_limit}

In this section, we analyse the behaviour in the small noise limit of the discretised best posterior mean defined in \eqref{eq_discrete_representation_best_posterior_mean}.
By the preceding section, for $\mathbf{O}\in\{\mathbf{O}_B,\mathbf{O}_D\}$, $\mathbf{OM}$ is a $n\times N_T$ matrix with $n=648$ and $N_T=101$.
Hence, $\mathbf{OM}$ is not invertible, but it admits a unique Moore--Penrose pseudoinverse $(\mathbf{OM})^+$.
Multiplying both sides of \eqref{eq_discrete_representation_best_posterior_mean} by $(\mathbf{OM})^{+}(\mathbf{OM})$, using the symmetry of $\mathbf{\Sigma}_{\param}$, and adding zero, we obtain
\begin{align*}
&(\mathbf{OM})^{+}(\mathbf{OM}) \mathbf{m}^{\data,\best}_{\param}
\\
=&(\mathbf{OM})^{+}(\mathbf{OM})\left( \mathbf{m}_{\param} + (\mathbf{OM}\mathbf{\Sigma}_{\param})^\ast((\mathbf{OM})\mathbf{\Sigma}_{\param}(\mathbf{OM})^\ast + \Sigma_{\noise})^{-1}(\data-\mathbf{OMm}_{\param}-\mathbf{O}\widetilde{\modelerror}^{\best})\right)
\\
=&(\mathbf{OM})^{+}(\mathbf{OM})\mathbf{m}_{\param}
\\
&+(\mathbf{OM})^{+}(\mathbf{OM}) \mathbf{\Sigma}_{\param}(\mathbf{OM})^\ast((\mathbf{OM})\mathbf{\Sigma}_{\param}(\mathbf{OM})^\ast + \Sigma_{\noise})^{-1}(\data-\mathbf{OMm}_{\param}-\mathbf{O}\widetilde{\modelerror}^{\best})
\\
=&(\mathbf{OM})^{+}(\mathbf{OM})\mathbf{m}_{\param}
\\
&+(\mathbf{OM})^{+}\left(\Sigma_\noise+(\mathbf{OM}) \mathbf{\Sigma}_{\param}(\mathbf{OM})^\ast\right)\left((\mathbf{OM})\mathbf{\Sigma}_{\param}(\mathbf{OM})^\ast + \Sigma_{\noise}\right)^{-1}(\data-\mathbf{OMm}_{\param}-\mathbf{O}\widetilde{\modelerror}^{\best})
\\
&-(\mathbf{OM})^{+}\Sigma_\noise \left((\mathbf{OM})\mathbf{\Sigma}_{\param}(\mathbf{OM})^\ast + \Sigma_{\noise}\right)^{-1}(\data-\mathbf{OMm}_{\param}-\mathbf{O}\widetilde{\modelerror}^{\best})
\\
=& (\mathbf{OM})^{+}(\mathbf{OM})\mathbf{m}_{\param}+(\mathbf{OM})^{+}(\data-\mathbf{OMm}_{\param}-\mathbf{O}\widetilde{\modelerror}^{\best})
\\
&-(\mathbf{OM})^{+}\Sigma_\noise \left((\mathbf{OM})\mathbf{\Sigma}_{\param}(\mathbf{OM})^\ast + \Sigma_{\noise}\right)^{-1}(\data-\mathbf{OMm}_{\param}-\mathbf{O}\widetilde{\modelerror}^{\best}).
\end{align*}
By the definition \eqref{eq_true_data_for_numerical_simulations} of $\dataRV$, the definition $\mathbf{\state}^{\best}\coloneqq \mathbf{M}\widetilde{\param}^{\best}+\widetilde{\modelerror}^{\best}$, and the linearity of $\mathbf{O}$,
\begin{equation*}
 \dataRV-\mathbf{O}\mathbf{M}\mathbf{m}_\param-\mathbf{O}\widetilde{\modelerror}^{\best}=(\mathbf{O}\mathbf{M}\widetilde{\param}^{\best}+\mathbf{O}\widetilde{\modelerror}^{\best}+\noise)-\mathbf{O}\mathbf{M}\mathbf{m}_\param-\mathbf{O}\widetilde{\modelerror}^{\best}=\mathbf{O}\mathbf{M}(\widetilde{\param}^{\best}-\mathbf{m}_\param)+\noise,
\end{equation*}
so $(\data-\mathbf{OMm}_{\param}-\mathbf{O}\widetilde{\modelerror}^{\best})=\mathbf{OM}(\widetilde{\param}^{\best}-\mathbf{m}_\param)+\widetilde{\noise}$, where $\widetilde{\noise}$ denotes the specific realisation of $\noise$ that yields $\data$.
If $R\coloneqq (\mathbf{OM})^{+}\Sigma_\noise \left((\mathbf{OM})\mathbf{\Sigma}_{\param}(\mathbf{OM})^\ast + \Sigma_{\noise}\right)^{-1}(\data-\mathbf{OMm}_{\param}-\mathbf{O}\widetilde{\modelerror}^{\best})$ is negligible, then the preceding observations imply that
\begin{align*}
 (\mathbf{OM})^{+}(\mathbf{OM}) \mathbf{m}^{\data,\best}_{\param}\approx & (\mathbf{OM})^{+}(\mathbf{OM})\mathbf{m}_{\param}+(\mathbf{OM})^{+}(\mathbf{OM}(\widetilde{\param}^{\best}-\mathbf{m}_\param)+\widetilde{\noise})
 \\
 =&(\mathbf{OM})^{+}(\mathbf{OM})\widetilde{\param}^{\best}+(\mathbf{OM})^{+}\widetilde{\noise},
\end{align*}
where $\widetilde{\noise}$ will be negligible with increasing probability as $\Norm{\Sigma_\noise}_{\op}\to 0$.

By the properties of the pseudoinverse, $(\mathbf{OM})^+(\mathbf{OM})$ is the projection to the support of $\mathbf{OM}$, i.e. to the orthogonal complement in $\bb{R}^{N_T}$ of $\kernel(\mathbf{OM})$.
We discuss an important property of this projection for the case $\mathbf{O}\leftarrow\mathbf{O}_D$, since we will use this property in \Cref{ssec_results} below.
Let $\mathbf{v}\in\bb{R}^{N_T}$ be the vector of coefficients of an arbitrary $v$ in the span of the temporal basis functions $(\psi_j)_{j\in[N_T]}$; see \Cref{ssec_implementation}.
Recall that $\mathbf{Mv}$ is an approximation of the weak solution of \eqref{eq_PDE_IBVP_zero_IC} with $\param'\leftarrow v$.
As per the discussion at the end of \Cref{sec_example}, it follows from the definition of $\obsoperator_D$ in \eqref{eq_observation_operators_for_numerical_results} that $\obsoperator_D \model v=(s(x^{(i)})v(t_i))_{i\in[n]}$, where $(x^{(i)},t_i)_{i\in[n]}$ are given in \eqref{eq_spatiotemporal_locations_of_observations}.
By the interpolation property of the temporal basis $(\psi_j)_{j\in[N_T]}$, it follows that $(\mathbf{O}_D \mathbf{Mv})_i= s(x^{(i)})v(t_i)=s(x^{(i)})\mathbf{v}_i$ for every $i\in[n]$.

We now use the preceding argument to identify the orthogonal complement of $\kernel(\mathbf{O}_D\mathbf{M})$.
Denote the support of a vector $\mathbf{v}\in\bb{R}^{N_T}$ by $\textup{supp}(\mathbf{v})\coloneqq\{j\in[N_T]\ :\ \mathbf{v}_j\neq 0\}$. 
Recall from \Cref{ssec_implementation} that $\Delta t=0.01$ and denote the index set of observation times given in \eqref{eq_spatiotemporal_locations_of_observations} by $I\coloneqq \{i\in[N_T]\ :\ (i-1) \Delta t\in\{ 0.1+0.01\ell\ :\ \ell\in[80]_0\}\}$.
Since $(x_1,x_2)=(0.4,0.4)$ is one of the spatial locations $x^{(i)}$ in \eqref{eq_spatiotemporal_locations_of_observations} at which measurements are taken, and since $(0.4,0.4)$ lies in the support of the function $s(\cdot)$ defined in \eqref{eq_source_term_spatial_component}, the preceding paragraph implies that $s(0.4,0.4)$ is strictly positive, and that $\mathbf{O}_D \mathbf{Mv}=0$ if and only if $\mathbf{v}_i=0$ for every $i\in I\subset [N_T]$.
Thus,
\begin{equation*}
 \kernel(\mathbf{O}_D\mathbf{M})^\perp=\spn\left\{\mathbf{v}\in\bb{R}^{N_T}\ :\ \textup{supp}(\mathbf{v})\subseteq I \right\}.
\end{equation*}
Since $(\mathbf{O}_D\mathbf{M})^+(\mathbf{O}_D\mathbf{M})$ is the projection to the orthogonal complement in $\bb{R}^{N_T}$ of $\kernel(\mathbf{O}_D\mathbf{M})$, it follows that for every $\mathbf{v}\in\bb{R}^{N_T}$, $\left((\mathbf{O}_D\mathbf{M})^+(\mathbf{O}_D\mathbf{M})\mathbf{v}\right)_j= \mathbf{v}_j$ if $j\in I$ and is zero otherwise.
In particular, we expect that as $\Norm{\Sigma_\noise}_{\op}$ decreases to zero, the best posterior mean $\mathbf{m}^{\data,\best}_{\param}$ for $\mathbf{O}_D$ will more closely match the discretised true parameter $\widetilde{\param}^{\best}$ on the index set $I$ of observation times.
An important corollary is that if $\mathbf{O}=\mathbf{O}_D$ and in particular if $\mathbf{O}_D\widetilde{\modelerror}^{\best}$ is close to $0$, then by comparing \eqref{eq_discrete_representation_approximate_posterior_mean} and \eqref{eq_discrete_representation_best_posterior_mean}, the approximate posterior mean $\mathbf{m}^{\data,\appr}_{\param}$ will be close to the discretised true parameter $\widetilde{\param}^{\best}$.

We close this section by observing that unlike $\mathbf{O}_D$, $(\mathbf{O}_B\mathbf{Mv})_i$ may be nonzero even if $\mathbf{v}_i=0$.
This is because $(\mathbf{O}_B \mathbf{Mv})_i$ approximates the value of the solution of \eqref{eq_PDE_IBVP_zero_IC} at some spatiotemporal point $(x^{(i)},t_i)$.
By the effects of diffusion and advection, $(\mathbf{O}_B \mathbf{Mv})_i$ may be nonzero even if $s(x^{(i)})v(t_i)=0$, e.g. if at spatiotemporal points near $(x^{(i)},t_i)$ the source term of the PDE is sufficiently large.
Hence, we do not expect the best posterior mean for $\mathbf{O}_B$ to agree with $\widetilde{\param}^{\best}$ to the same extent that the best posterior mean for $\mathbf{O}_D$ agrees with $\widetilde{\param}^{\best}$ on the index set $I$ of observation times.

\subsection{Results}
\label{ssec_results}

\begin{figure}[!ht]
\centering
    \begin{subfigure}[b]{0.475\textwidth}
        \includegraphics[width=\textwidth]{./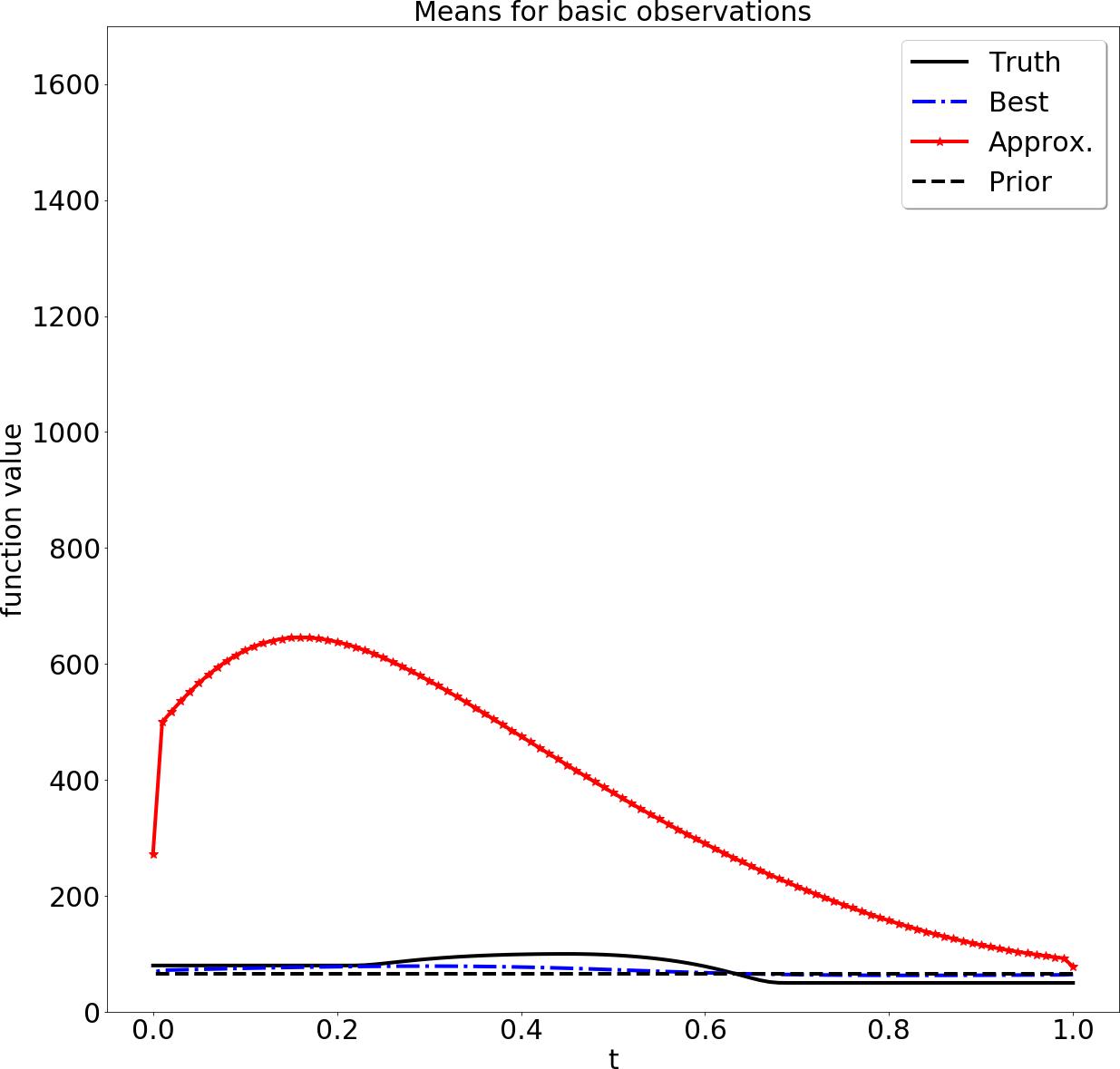}
        \caption{$\mathbf{m}^{\data,\appr}_\param$ and $\mathbf{m}^{\data,\best}_\param$ for $\mathbf{O}_B$, lower SNR}
        \label{fig_PosteriorMeans_BasicObservation_LowSNR}
    \end{subfigure}
      \hfill    
    \begin{subfigure}[b]{0.475\textwidth}
        \includegraphics[width=\textwidth]{./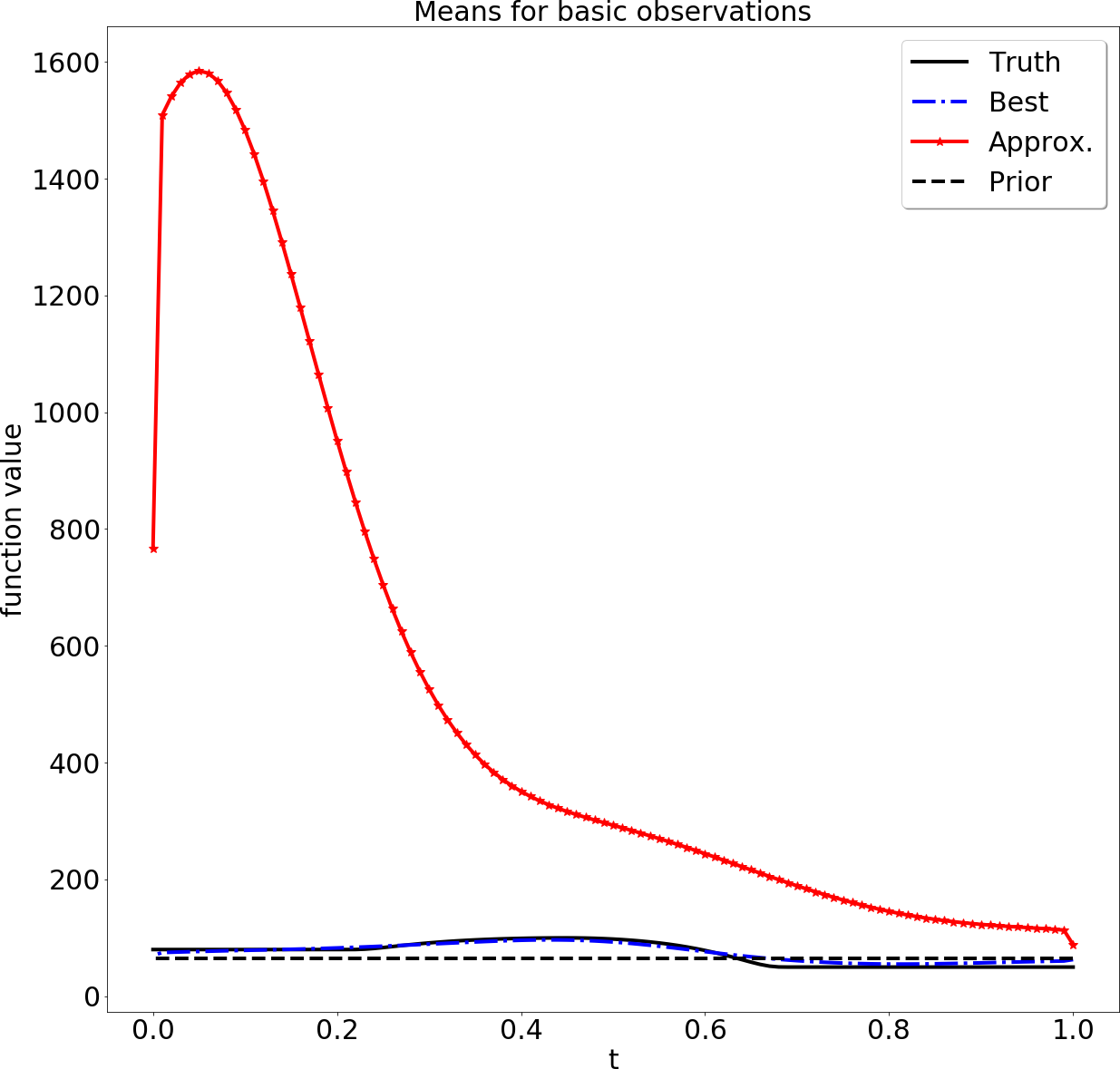}
        \caption{$\mathbf{m}^{\data,\appr}_\param$ and $\mathbf{m}^{\data,\best}_\param$ for $\mathbf{O}_B$, higher SNR}
        \label{fig_PosteriorMeans_BasicObservation_HighSNR}
    \end{subfigure}
    \caption{Comparison of basic posterior mean $\mathbf{m}^{\data,\appr}_{\param}$ (red solid line with red {$\blacktriangle$} markers) and $\mathbf{m}^{\data,\best}_{\param}$ (blue dash-dotted line) from \eqref{eq_discrete_representation_posteriors} for basic observation operator $\mathbf{O}_B$, against prior mean $\mathbf{m}_{\param}$ (black dashed line) and truth $\widetilde{\param}^{\best}$ (black solid line).}
    \label{fig_posterior_means_basic_observation_operator}
\end{figure}
\Cref{fig_PosteriorMeans_BasicObservation_LowSNR} and \Cref{fig_PosteriorMeans_BasicObservation_HighSNR} show the approximate posterior mean $\mathbf{m}^{\data,\appr}_{\param}$ and the best posterior mean $\mathbf{m}^{\data,\best}_{\param}$ for the basic observation operator $\mathbf{O}_B$, for the lower SNR and higher SNR respectively.
By \eqref{eq_discrete_representation_approximate_posterior_mean} and \eqref{eq_discrete_representation_best_posterior_mean}, 
\begin{equation*}
 \mathbf{m}^{\data,\appr}_{\param}-\mathbf{m}^{\data,\best}_{\param}=  (\mathbf{O}_B\mathbf{M\Sigma}_{\param})^\ast((\mathbf{O}_B\mathbf{M})\mathbf{\Sigma}_{\param}(\mathbf{O}_B\mathbf{M})^\ast + \Sigma_{\noise})^{-1}(\mathbf{O}_B\widetilde{\modelerror}^{\best})
\end{equation*}
is captured by the difference between the red and blue curves in \Cref{fig_PosteriorMeans_BasicObservation_LowSNR} and \Cref{fig_PosteriorMeans_BasicObservation_HighSNR}.
In both figures, the difference eventually decreases with time.
This is because $\widetilde{\modelerror}^{\best}$ is the numerical solution of the PDE-IBVP \eqref{eq_PDE_IBVP_modelerror}, which has zero forcing and allows for flux across the boundary, and thus the values of $\widetilde{\modelerror}^{\best}$ decay with time.
Nevertheless, the difference is large relative to the truth $\widetilde{\param}^{\best}$ over the time interval $\mcal{T}$.
These observations show the importance of the model error for this specific inverse problem.

By comparing \Cref{fig_PosteriorMeans_BasicObservation_LowSNR} and \Cref{fig_PosteriorMeans_BasicObservation_HighSNR}, we observe that the difference $\mathbf{m}^{\data,\appr}_{\param}-\mathbf{m}^{\data,\best}_{\param}$ increases considerably after increasing the SNR.
This occurs partly because the lower SNR corresponds to a larger $\sigma_\noise$ in \eqref{eq_true_data_for_numerical_simulations}, which in turn leads to larger values of $\noise$ and thus to entries of $\mathbf{O}\widetilde{\modelerror}^{\best}+\noise$ that are more likely to be closer to zero.
This behaviour is consistent with what we expect, namely that ignoring model error can lead to larger errors in the posterior mean when the data are informative and the model error is large.

\begin{figure}[!ht]
\centering
     \begin{subfigure}[b]{0.475\textwidth}
        \includegraphics[width=\textwidth]{./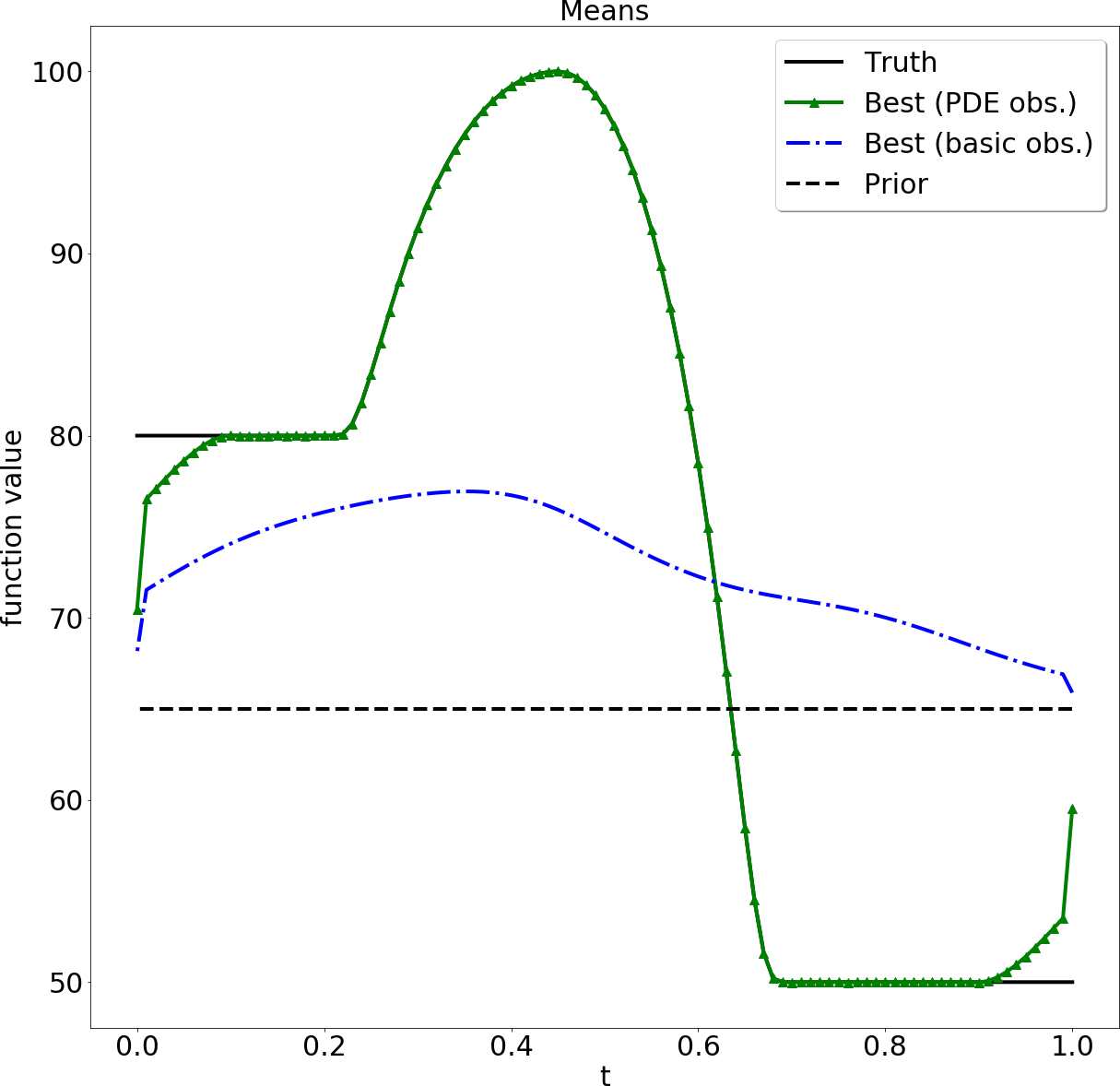}
        \caption{$\mathbf{m}^{\data,\best}_\param$ for $\mathbf{O}_B$ and $\mathbf{m}^{\data,\best}_{\param}$ for $\mathbf{O}_D$, lower SNR}
        \label{fig_BestPosteriorMeans_LowSNR}
    \end{subfigure}
      \hfill    
      \begin{subfigure}[b]{0.475\textwidth}
        \includegraphics[width=\textwidth]{./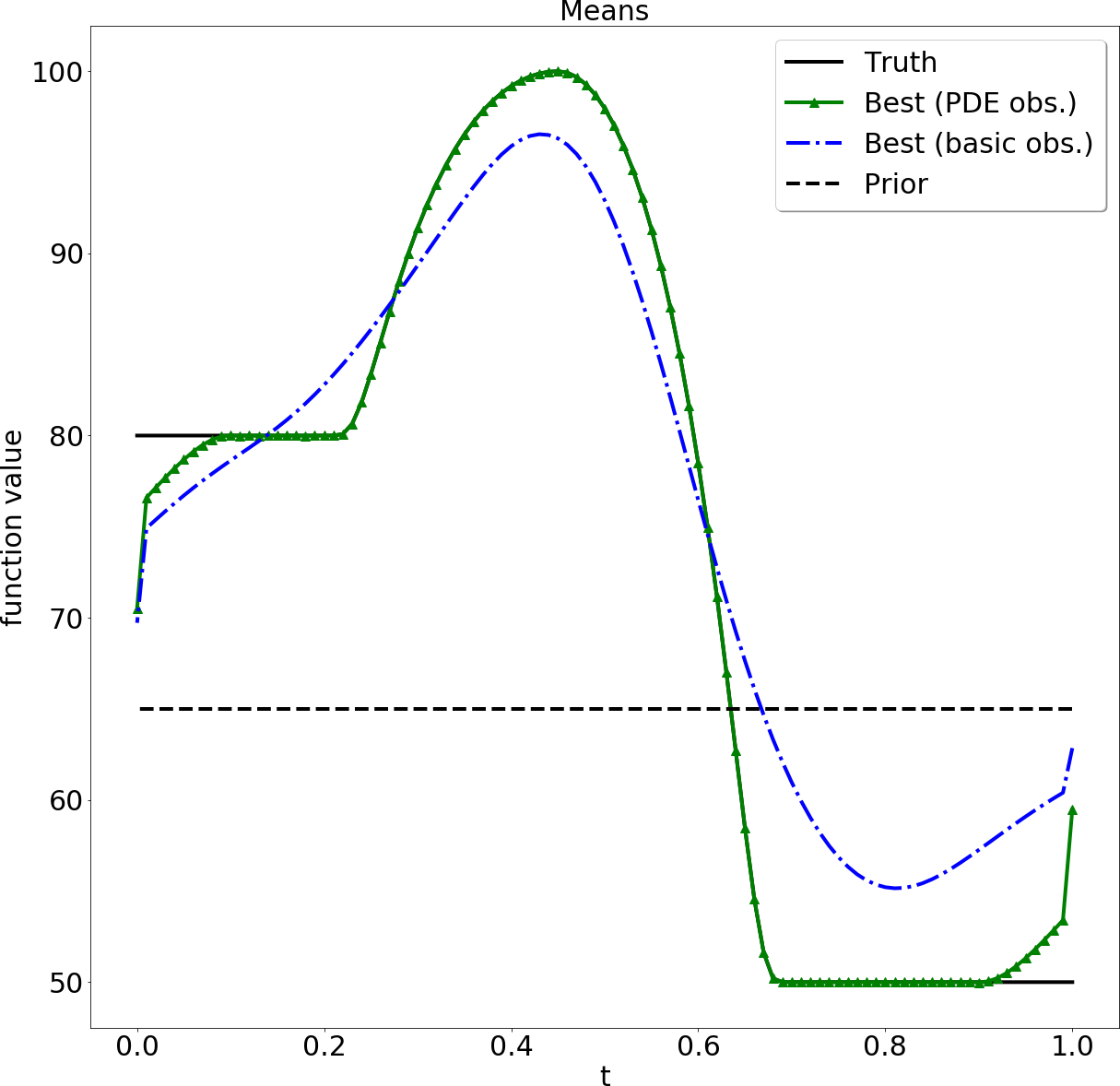}
        \caption{$\mathbf{m}^{\data,\best}_\param$ for $\mathbf{O}_B$ and $\mathbf{m}^{\data,\best}_{\param}$ for $\mathbf{O}_D$, higher SNR}
        \label{fig_BestPosteriorMeans_HighSNR}
    \end{subfigure}
\caption{Comparison of best posterior means $\mathbf{m}^{\data,\best}_{\param}$ from \eqref{eq_discrete_representation_best_posterior_mean} for basic observation operator $\mathbf{O}_{B}$ (blue dash-dotted line) and PDE observation operator $\mathbf{O}_{D}$ (green solid line with green $\blacktriangle$ markers), against prior mean $\mathbf{m}_{\param}$ (black dashed line) and truth $\widetilde{\param}^{\best}$ (black solid line).}
    \label{fig_best_posterior_means}
\end{figure}
In \Cref{fig_BestPosteriorMeans_LowSNR} (respectively, \Cref{fig_BestPosteriorMeans_HighSNR}), we show for the lower (resp. higher) SNR setting the best posterior mean $\mathbf{m}^{\data,\best}_{\param}$ for $\mathbf{O}_B$, the best posterior mean $\mathbf{m}^{\data,\best}_{\param}$ for $\mathbf{O}_D$, and the truth $\widetilde{\param}^{\best}$.
For $\mathbf{O}_B$, $\mathbf{m}^{\data,\best}_{\param}$ fails to capture important features of $\widetilde{\param}^{\best}$ for the lower SNR; it does better for the higher SNR, but under- and over-predictions remain visible.
In contrast, $\mathbf{m}^{\data,\best}_\param$ for $\mathbf{O}_D$ matches the truth $\widetilde{\param}^{\best}$ very closely over the observation time window $0.1\leq t \leq 0.9$ for both the lower and higher SNR.
The close agreement between the best posterior mean $\mathbf{m}^{\data,\best}_{\param}$ and $\widetilde{\param}^{\best}$ --- and the different performance of the best posterior mean depending on whether $\mathbf{O}_D$ or $\mathbf{O}_B$ is used --- are consistent with the discussion in \Cref{ssec_behaviour_of_best_posterior_mean_in_small_noise_limit}.

\begin{figure}[ht]
\centering
    \begin{subfigure}[b]{0.475\textwidth}
        \includegraphics[width=\textwidth]{./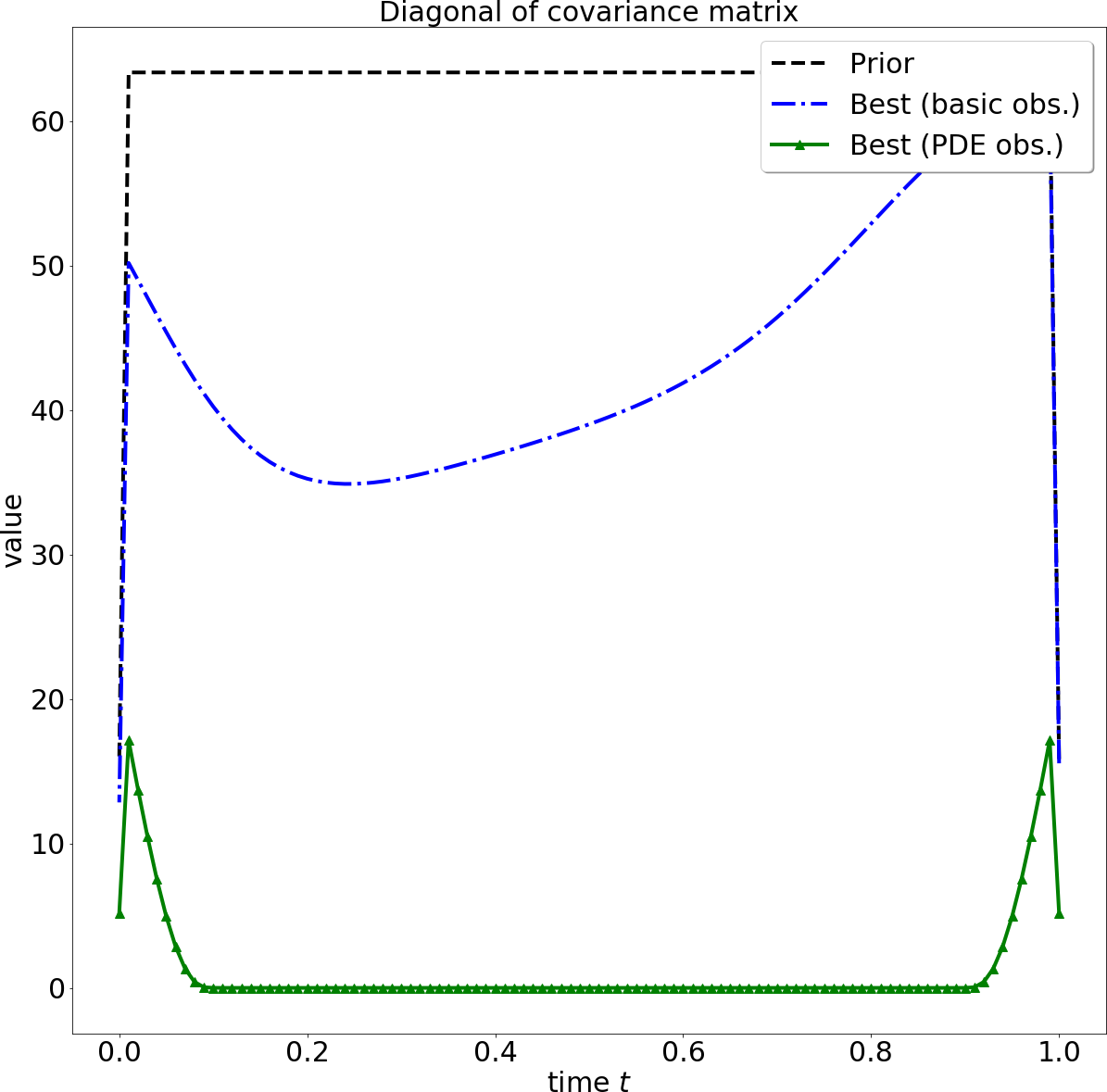}
        \caption{Diagonal entries of $\mathbf{\Sigma}^{\data,\best}_{\param}$ for $\mathbf{O}_B$ and $\mathbf{O}_D$, lower SNR}
        \label{fig_BestPosteriorCovariance_LowSNR}
    \end{subfigure}
      \hfill    
      \begin{subfigure}[b]{0.475\textwidth}
        \includegraphics[width=\textwidth]{./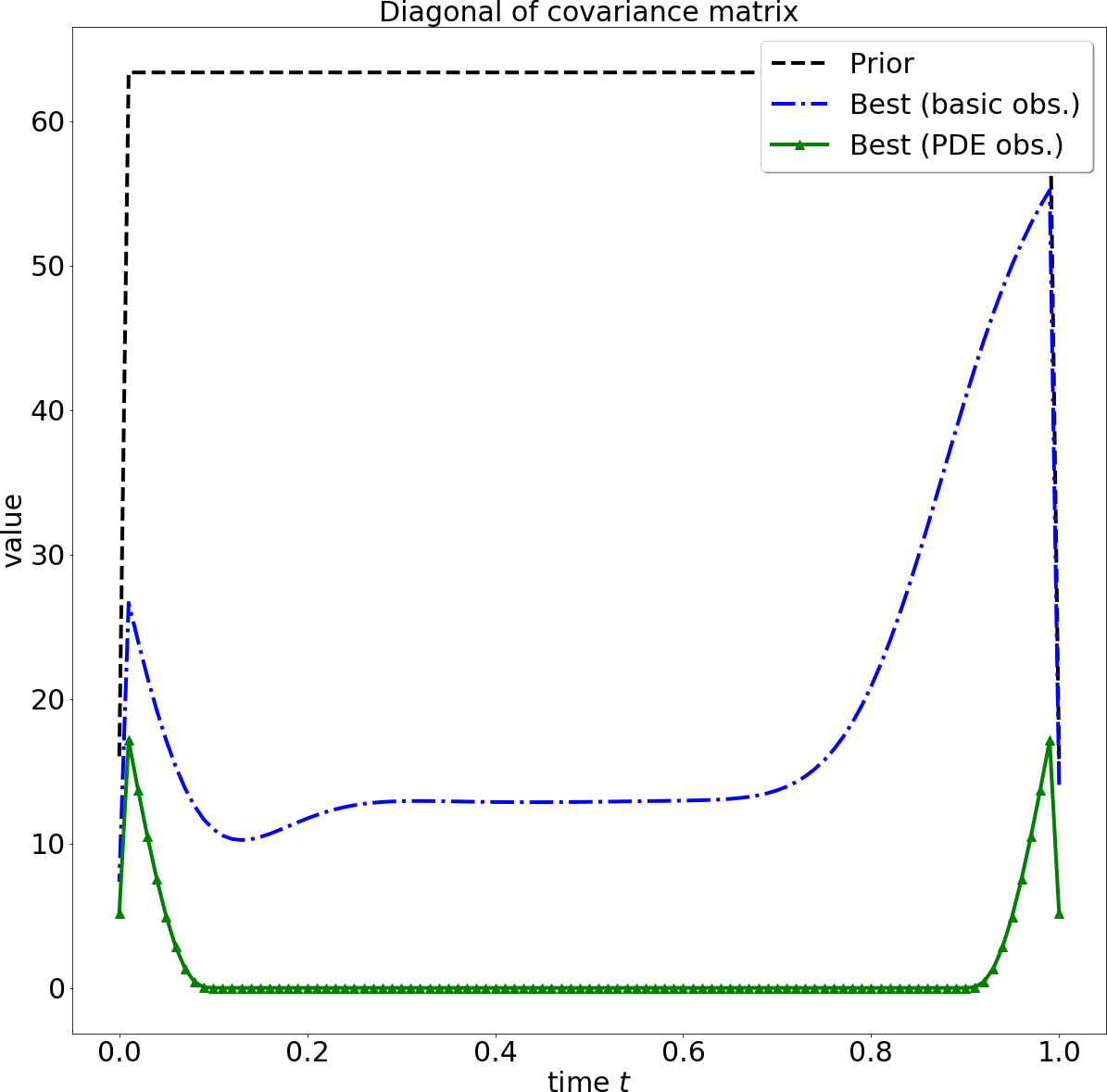}
        \caption{Diagonal entries of $\mathbf{\Sigma}^{\data,\best}_{\param}$ for $\mathbf{O}_B$ and $\mathbf{O}_D$, higher SNR}
        \label{fig_BestPosteriorCovariance_HighSNR}
    \end{subfigure}
    \caption{Diagonal entries of prior covariance $\mathbf{\Sigma}_{\param}$ (black dashed line) and posterior covariance $\mathbf{\Sigma}^{\data,\best}_{\param}$ from \eqref{eq_discrete_representation_approximate_posterior_covariance} for basic observation operator $\mathbf{O}_B$ (blue dash-dotted line) and PDE observation operator $\mathbf{O}_D$ (green solid line with green $\blacktriangle$ markers) as a function of time, for lower SNR (left) and higher SNR (right).}
    \label{fig_posterior_covariance_comparisons}
\end{figure}

In \Cref{fig_BestPosteriorCovariance_LowSNR} and \Cref{fig_BestPosteriorCovariance_HighSNR}, we compare the diagonal entries of the best posterior covariance matrix $\mathbf{\Sigma}^{\data,\best}_{\param}$ for $\mathbf{O}_B$ and for $\mathbf{O}_D$, in the lower SNR and higher SNR cases respectively.
We observe that the diagonal of $\mathbf{\Sigma}^{\data,\best}_{\param}$ for $\mathbf{O}_D$ is close to zero over the time observation window, whereas the diagonal of $\mathbf{\Sigma}^{\data,\best}_{\param}$ for $\mathbf{O}_B$ is not.
By \eqref{eq_discrete_representation_approximate_posterior_covariance}, the difference in behaviour is due to the size of the noise covariance $\Sigma_\noise$.
For $\mathbf{O}_D$, the noise standard deviation satisfies $\sigma_\noise=O(10^{-7})$ and $\sigma_\noise=O(10^{-8})$ in the lower and higher SNR cases respectively.
Thus $\Sigma_\noise$ is almost zero for $\mathbf{O}_D$, and $\mathbf{\Sigma}^{\data,\best}_{\param}$ is close to zero as well.
In contrast, for $\mathbf{O}_B$, $\sigma_\noise \approx 5.66$ and $\sigma_\noise\approx 1.13$ in the lower and higher SNR cases respectively, so that $\Sigma_\noise$ is not negligible.
The differences in $\mathbf{\Sigma}^{\data,\best}_{\param}$ for $\mathbf{O}_B$ in the lower and higher SNR cases are also due to the size of $\Sigma_\noise$.

By comparing the curve for the approximate posterior mean $\mathbf{m}^{\data,\appr}_{\param}$ for $\mathbf{O}_B$ from \Cref{fig_PosteriorMeans_BasicObservation_LowSNR} with the curve for $\mathbf{\Sigma}^{\data,\best}_{\param}$ for $\mathbf{O}_B$ in \Cref{fig_BestPosteriorCovariance_LowSNR} and recalling that $\mathbf{\Sigma}^{\data,\best}_{\param}=\mathbf{\Sigma}^{\data,\appr}_{\param}$, we note that the error of $\mathbf{m}^{\data,\appr}_{\param}$ with respect to $\widetilde{\param}^{\best}$ is the largest over the subinterval $0.1\leq t\leq 0.2$, but the marginal posterior variance is smallest over the same interval.
In other words, the approximate posterior $\mu^{\data,\appr}_{\param}$ is most confident over the region where the bias in the approximate posterior mean is the largest. 
The approximate posterior is said to be `overconfident'; see e.g. \cite[Section 8]{AbdulleGaregnani2020} and the references therein for other examples of overconfidence in the presence of model error.
By comparing \Cref{fig_BestPosteriorMeans_HighSNR} and \Cref{fig_BestPosteriorCovariance_HighSNR}, we see that the phenomenon of overconfidence becomes worse for higher SNRs, i.e. for more informative data.

We make some important remarks about the results in 
\Cref{fig_best_posterior_means} and 
\Cref{fig_posterior_covariance_comparisons}. These results show that if one uses an observation operator that satisfies $\obsoperator \modelerror^\best=0$, then the best posterior $\mu^{\data,\best}_{\param}$ and the approximate posterior $\mu^{\data,\appr}_{\param}$ are concentrated around the true parameter.
We emphasise that these results are for the specific inverse problem presented in \Cref{ssec_setup_Bayesian_inverse_problem}. 
Our explanation in \Cref{ssec_behaviour_of_best_posterior_mean_in_small_noise_limit} for why the best posterior mean $\mathbf{m}^{\data,\best}_{\param}$ is close to $\param^{\best}$, and our explanation in the preceding paragraphs for why the diagonal of $\mathbf{\Sigma}^{\data,\best}_{\param}$ is close to zero over the time observation window, both make use of the size of the noise covariance $\Sigma_\noise$.
This suggests that whether one can obtain similar results for other inverse problems can depend on other factors, and not only on the condition $\obsoperator \modelerror^\best=0$.
Indeed, in the last paragraph of \Cref{sec_example}, we showed for a modification of our inverse problem that the condition $\obsoperator \modelerror^\best=0$ can even lead to uninformative observations.

\section{Conclusion}
\label{sec_conclusion}

We considered Bayesian inverse problems in the presence of model error in the finite-dimensional data setting with additive, Gaussian noise, and the parameter-to-observable map is the composition of a linear observation operator with a possibly nonlinear model.
Under the assumption that there exists a unique best model and best parameter, the `model error' is then the difference between the `approximate' model that one uses and the best model.
The approximate model and best model generate the `approximate posterior' and `best posterior' respectively.

We described some natural approaches for accounting for model error and the associated posteriors.
We used the local Lipschitz stability property of posteriors with respect to perturbations in the likelihood to bound Kullback--Leibler divergences of the posterior associated to each approach, with respect to either the best posterior or the approximate posterior.
These bounds have two important properties: first, they control the Kullback--Leibler divergence in terms of quantities that depend on the observation operator and the objects used to account for model error; and second, the terms in the bounds are finite under mild hypotheses, e.g. $L^1$-integrability of the misfits with respect to the prior.

Our bounds yield sufficient (respectively, necessary) conditions on the observation operator and the objects used by each approach to yield a posterior that agrees with the best posterior (resp. that differs from the approximate posterior).
These conditions provide guidelines for how to choose observation operators to mitigate the effect of model error on Bayesian inference, given a chosen approach.
A unifying theme of these conditions is the importance of the kernel of the observation operator.

Using a linear Gaussian inverse problem involving an advection-diffusion-reaction PDE, we illustrated the sufficient condition for the approximate posterior to agree with the best posterior.
We then reported numerical results from some simulations of this inverse problem under two different signal-to-noise settings, and used these results to discuss the importance of the sufficient condition itself, of observation noise, and of the information conveyed by the observation operator. 

It is natural to ask whether it would be possible to combine classical optimal experimental design with the approach presented above. For example, in an ideal setting where one has access to multiple observation operators $(\obsoperator_i)_{i\in I}$ such that $\modelerror^\best \in \kernel(\obsoperator_i)$ for every $i\in I$, one could aim to optimise the information gain over the set $(\obsoperator_i)_{i\in I}$.
Another potential avenue for future research would be to investigate the design of observation operators that satisfy the sufficient conditions; we expect this to be challenging and highly problem-specific.
It would be of interest to study the setting of nonlinear observation operators and other approaches for accounting for model error.

\section*{Acknowledgements}

The research of NC and HB has received funding from the ERC under the European Union’s Horizon 2020 Research and Innovation Programme --- Grant Agreement ID \href{https://cordis.europa.eu/project/id/818473}{818473}. The research of HCL has been partially funded by the Deutsche Forschungsgemeinschaft (DFG) --- Project-ID \href{https://gepris.dfg.de/gepris/projekt/318763901}{318763901} --- SFB1294.
The authors would like to thank Nicole Aretz for discussions on the numerics of PDEs and Giuseppe Carere for helpful comments.
In addition, the authors would like to thank the three reviewers and review editor for their feedback.

\appendix

\section{Technical lemmas}
\label{sec_technical_lemmas}

Let $d\in\bb{N}$ and $M_i\in\bb{R}^{d\times d}$, $i=1,2$ be symmetric and positive definite.

Recall the notation \eqref{eq_matrix_weighted_inner_product_norm} for a matrix-weighted norm.
\begin{lemma}
 \label{lem_linear_algebra}
Suppose that $M_1,M_2\in\bb{R}^{d\times d}$ are symmetric, that $M_1$ is positive definite, and that $M_2$ is nonnegative definite.  
Then $M_1+M_2$ is symmetric positive definite and $M_1^{-1}-(M_1+M_2)^{-1}$ is symmetric nonnegative definite.
In addition, 
\begin{equation}
\label{eq_equivalence_of_matrix_induced_norms}
\frac{ \Abs{z}_{(M_1+M_2)^{-1}}}{\Norm{ (M_1+M_2)^{-1/2}M_1^{1/2}}}_{\op}\leq \Abs{z}_{M_1^{-1}}\leq \Norm{ M_1^{-1/2}(M_1+M_2)^{1/2}}_{\op} \Abs{z}_{(M_1+M_2)^{-1}},\quad z\in\bb{R}^{d}.
\end{equation}
\end{lemma}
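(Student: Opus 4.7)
The plan is to dispatch the three claims in order: the first is elementary, the second rests on simultaneous diagonalisation with $M_1$, and the third reduces to the definition of the operator norm after an appropriate change of variables.

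For the first claim, the matrix $M_1+M_2$ is symmetric as the sum of symmetric matrices, and the identity $z^\top(M_1+M_2)z = z^\top M_1 z + z^\top M_2 z$ shows that the associated quadratic form is bounded below by the positive definite form $z \mapsto z^\top M_1 z$. Hence $M_1+M_2$ is positive definite and in particular invertible, and its inverse square root is well-defined.

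For the second claim, I would introduce $A \coloneqq M_1^{-1/2} M_2 M_1^{-1/2}$, which is symmetric nonnegative definite. Factoring $M_1+M_2 = M_1^{1/2}(I+A)M_1^{1/2}$ gives $(M_1+M_2)^{-1} = M_1^{-1/2}(I+A)^{-1}M_1^{-1/2}$, and a routine manipulation yields
\[
 M_1^{-1} - (M_1+M_2)^{-1} = M_1^{-1/2}\bigl(I-(I+A)^{-1}\bigr)M_1^{-1/2} = M_1^{-1/2}(I+A)^{-1}A\, M_1^{-1/2}.
\]
Since $A$ and $(I+A)^{-1}$ are simultaneously diagonalisable symmetric matrices with eigenvalues $\lambda_i \geq 0$ and $(1+\lambda_i)^{-1} > 0$ respectively, their product is symmetric with eigenvalues $\lambda_i/(1+\lambda_i) \geq 0$, and conjugation by the symmetric matrix $M_1^{-1/2}$ preserves symmetry and nonnegative definiteness.

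For the third claim, the strategy is to express both matrix-weighted norms as Euclidean lengths of images of a single vector under a linear map, and then apply $\Abs{Lv} \leq \Norm{L}_{\op}\Abs{v}$. Specifically, setting $y \coloneqq M_1^{-1/2}z$ yields $\Abs{z}_{M_1^{-1}} = \Abs{y}$ and
\[
 \Abs{z}_{(M_1+M_2)^{-1}}^2 = y^\top M_1^{1/2}(M_1+M_2)^{-1}M_1^{1/2} y = \Abs{(M_1+M_2)^{-1/2}M_1^{1/2}y}^2,
\]
which gives the left inequality. Symmetrically, setting $w \coloneqq (M_1+M_2)^{-1/2}z$ gives $\Abs{z}_{(M_1+M_2)^{-1}} = \Abs{w}$ and $\Abs{z}_{M_1^{-1}} = \Abs{M_1^{-1/2}(M_1+M_2)^{1/2}w}$, yielding the right inequality.

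There is no real obstacle here; the only point requiring care is the bookkeeping for the double inequality in the third claim, since the two bounding constants involve the same two matrices but with the roles of inverse and square root swapped, so one must pick the two substitutions symmetrically and apply the operator-norm bound in the appropriate direction on each side.
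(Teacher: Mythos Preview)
Your proof is correct. The first and third claims are handled in essentially the same way as in the paper: the paper also proves the norm inequalities by inserting $M_1^{1/2}M_1^{-1/2}$ (respectively $(M_1+M_2)^{1/2}(M_1+M_2)^{-1/2}$) and applying the operator-norm bound, which is precisely your substitution argument written inline.

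The one genuine difference is in the second claim. The paper invokes a rearranged Woodbury identity,
\[
 M_1^{-1}-(M_1+M_2)^{-1}=M_1^{-1}M_2(I+M_1^{-1}M_2)^{-1}M_1^{-1},
\]
and then asserts that the right-hand side ``inherits the nonnegative definiteness of $M_2$''. Your route via the congruence $A=M_1^{-1/2}M_2M_1^{-1/2}$ and the identity $I-(I+A)^{-1}=(I+A)^{-1}A$ is cleaner: it makes symmetry manifest at every step, and the nonnegativity is read off directly from the eigenvalues $\lambda_i/(1+\lambda_i)$ of the commuting product. The paper's Woodbury expression, by contrast, is not obviously symmetric as written (it contains the non-symmetric factor $M_1^{-1}M_2$), so the inheritance claim requires an extra argument that the paper omits. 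Your approach is therefore more self-contained, at the modest cost of introducing the auxiliary matrix $A$.
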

\begin{proof}
By the assumptions on $M_1$ and $M_2$, $M_1+M_2$ is positive definite.
As the difference of two symmetric matrices, $M_1^{-1}-(M_1+M_2)^{-1}$ is symmetric. 
To show that it is nonnegative, we use the following rearrangement of the Woodbury formula, which we take from \cite[Equation (3)]{Chang2006}:
\begin{align*}
  (M_1+M_2)^{-1}=&M_1^{-1}-M_1^{-1}M_2(I+M_1^{-1}M_2)^{-1}M_1^{-1}
 \nonumber
 \\
 \Longleftrightarrow M_1^{-1}-(M_1+M_2)^{-1}=&M_1^{-1}M_2(I+M_1^{-1}M_2)^{-1}M_1^{-1}.
% \label{eq_woodbury_formula_Changs_version}
\end{align*}
Invertibility of $I+M_1^{-1}M_2=M_1^{-1}(M_1+M_2)$ follows from the invertibility of $M_1^{-1}$ and $M_1+M_2$.
From the second equation above, $M_1^{-1}-(M_1+M_2)^{-1}$ inherits the nonnegative definiteness of $M_2$.

The first inequality of \eqref{eq_equivalence_of_matrix_induced_norms} follows by
\begin{align*}
 \Abs{z}_{(M_1+M_2)^{-1}} & =\Abs{ (M_1+M_2)^{-1/2}z}=\Abs{ (M_1+M_2)^{-1/2}M_1^{1/2}M_1^{-1/2}z}
 \\
 &\leq \Norm{ (M_1+M_2)^{-1/2}M_1^{1/2}}_{\op}\Abs{ M_1^{-1/2}z} 
 \\
 &= \Norm{ (M_1+M_2)^{-1/2}M_1^{1/2}}_{\op} \Abs{ z}_{M_1^{-1}}.
\end{align*}
The second inequality of \eqref{eq_equivalence_of_matrix_induced_norms} follows by
\begin{align*}
 \Abs{z}_{M_1^{-1}} & =\Abs{ M_1^{-1/2}z}=\Abs{ M_1^{-1/2}(M_1+M_2)^{1/2}(M_1+M_2)^{-1/2}z}
 \\
 &\leq \Norm{ M_1^{-1/2}(M_1+M_2)^{1/2}}_{\op}\Abs{ (M_1+M_2)^{-1/2}z}
 \\
 &=\Norm{ M_1^{-1/2}(M_1+M_2)^{1/2}}_{\op}\Abs{ z}_{(M_1+M_2)^{-1}}.
\end{align*}
This completes the proof of \Cref{lem_linear_algebra}.
\end{proof}

We will use the following bound in the proofs below.
\begin{lemma}
 \label{lem_L1norm_diff_quadratic_forms}
 Let $(E,d_E)$ be a metric space, $\mu\in\mcal{P}(E)$, and $f$ and $g$ be $\bb{R}^{d}$-valued measurable functions on $E$.
 Let $M_1\in\bb{R}^{d\times d}$ be symmetric and positive definite and $M_2\in\bb{R}^{d\times d}$ be symmetric nonnegative definite.
 Then
 \begin{align}
 \label{eq_lem_L1norm_diff_quadratic_forms_same_weighting_matrix}
  &\Norm{ \Abs{f}^{2}_{M_1^{-1}}-\Abs{f+g}^{2}_{M_1^{-1}}}_{L^1_\mu}\leq \Norm{ \Abs{g}^{2}_{M_1^{-1}}}_{L^1_\mu}^{1/2}\left( \Norm{\Abs{f+g}^{2}_{M_1^{-1}}}_{L^1_\mu}^{1/2}+\Norm{ \Abs{f}^{2}_{M_1^{-1}}}_{L^1_\mu}^{1/2}\right).
  \end{align}
  More generally,
\begin{align}
 & \Norm{ \Abs{f}^{2}_{M_1^{-1}}-\Abs{f+g}^{2}_{(M_1+M_2)^{-1}}}_{L^1_\mu}
  \nonumber
  \\
  \leq & \Norm{ \Abs{g}^{2}_{M_1^{-1}}}_{L^1_\mu}^{1/2}
  \left(\Norm{M_1^{-1/2}(M_1+M_2)^{1/2}}_{\op}\cdot\Norm{\Abs{f+g}^{2}_{(M_1+M_2)^{-1}}}_{L^1_\mu}^{1/2}+\Norm{ \Abs{f}^{2}_{M_1^{-1}}}_{L^1_\mu}^{1/2}\right)
  \label{eq_lem_L1norm_diff_quadratic_forms_different_weighting_matrix}
  \\
  &+\Norm{ \Abs{f+g}_{M_1^{-1}-(M_1+M_2)^{-1}}^{2} }_{L^1_\mu},
 \nonumber
 \end{align}
 where $\Abs{f+g}_{M_1^{-1}-(M_1+M_2)^{-1}}^{2}\coloneqq \Abs{f+g}_{M_1^{-1}}^{2}-\Abs{f+g}_{(M_1+M_2)^{-1}}^{2}$.
 In addition, 
 \begin{align}
     &\Norm{ \Abs{g}^{2}_{M_1^{-1}}}_{L^1_\mu}^{1/2}\leq \Norm{ \Abs{f}_{M_1^{-1}}^{2}}_{L^1_\mu}^{1/2}+ \Norm{M_1^{-1/2}(M_1+M_2)^{1/2}}_{\op}\Norm{ \Abs{f+g}_{(M_1+M_2)^{-1}}^{2}}_{L^1_\mu}^{1/2}
     \label{eq_bound_L1norm_g}
     \\
     &\Norm{ \Abs{f+g}_{M_1^{-1}-(M_1+M_2)^{-1}}^{2} }_{L^1_\mu}\leq \left(1+\Norm{M_1^{-1/2}(M_1+M_2)^{1/2}}_{\op}\right)\Norm{ \Abs{f+g}_{(M_1+M_2)^{-1}}^{2} }_{L^1_\mu}.
     \label{eq_bound_L1norm_fplusg_difference_of_weighting_matrices}
 \end{align}
  \end{lemma}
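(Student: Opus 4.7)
The key identity is the scalar one $a^2-b^2=(a-b)(a+b)$ applied to the matrix-weighted norms: with $a=\Abs{f}_{M_1^{-1}}$ and $b=\Abs{f+g}_{M_1^{-1}}$ (viewed as a pair of nonnegative numbers), or equivalently via the inner-product identity $\Abs{u}^2_{M_1^{-1}}-\Abs{v}^2_{M_1^{-1}}=\ang{u-v,u+v}_{M_1^{-1}}$ followed by Cauchy--Schwarz. The plan is to first prove \eqref{eq_lem_L1norm_diff_quadratic_forms_same_weighting_matrix}, then bootstrap to the three subsequent inequalities by adding/subtracting an intermediate term and invoking \Cref{lem_linear_algebra} for the norm-equivalence constants.

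For \eqref{eq_lem_L1norm_diff_quadratic_forms_same_weighting_matrix}: using the polarisation identity I would write
\begin{equation*}
\Abs{\Abs{f}^2_{M_1^{-1}}-\Abs{f+g}^2_{M_1^{-1}}}=\Abs{\ang{-g,\,2f+g}_{M_1^{-1}}}\leq \Abs{g}_{M_1^{-1}}\bigl(\Abs{f}_{M_1^{-1}}+\Abs{f+g}_{M_1^{-1}}\bigr),
\end{equation*}
where the inequality follows from Cauchy--Schwarz in $\bb{R}^d$ equipped with $\ang{\cdot,\cdot}_{M_1^{-1}}$ together with the triangle inequality applied to $2f+g=f+(f+g)$. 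Integrating against $\mu$ and applying Cauchy--Schwarz for the $L^2_\mu$ pairing gives the claimed bound, since $\Norm{\Abs{h}_{M_1^{-1}}}_{L^2_\mu}=\Norm{\Abs{h}^2_{M_1^{-1}}}_{L^1_\mu}^{1/2}$.

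For \eqref{eq_lem_L1norm_diff_quadratic_forms_different_weighting_matrix}: I would decompose
\begin{equation*}
\Abs{f}^2_{M_1^{-1}}-\Abs{f+g}^2_{(M_1+M_2)^{-1}} = \bigl(\Abs{f}^2_{M_1^{-1}}-\Abs{f+g}^2_{M_1^{-1}}\bigr)+\Abs{f+g}^2_{M_1^{-1}-(M_1+M_2)^{-1}},
\end{equation*}
apply the triangle inequality in $L^1_\mu$, bound the first piece using \eqref{eq_lem_L1norm_diff_quadratic_forms_same_weighting_matrix}, and convert the remaining $M_1^{-1}$ norm of $f+g$ into the $(M_1+M_2)^{-1}$ norm by the second estimate of \eqref{eq_equivalence_of_matrix_induced_norms} in \Cref{lem_linear_algebra}, which introduces exactly the factor $\Norm{M_1^{-1/2}(M_1+M_2)^{1/2}}_{\op}$. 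The inequality \eqref{eq_bound_L1norm_g} is then immediate from the pointwise triangle inequality $\Abs{g}_{M_1^{-1}}\leq \Abs{f}_{M_1^{-1}}+\Abs{f+g}_{M_1^{-1}}$, a second application of \Cref{lem_linear_algebra}, and taking $L^2_\mu$ norms.

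For \eqref{eq_bound_L1norm_fplusg_difference_of_weighting_matrices}: since $M_1^{-1}-(M_1+M_2)^{-1}$ is positive semidefinite by \Cref{lem_linear_algebra}, the integrand is nonnegative, so
\begin{equation*}
\Abs{f+g}^2_{M_1^{-1}-(M_1+M_2)^{-1}}=\bigl(\Abs{f+g}_{M_1^{-1}}-\Abs{f+g}_{(M_1+M_2)^{-1}}\bigr)\bigl(\Abs{f+g}_{M_1^{-1}}+\Abs{f+g}_{(M_1+M_2)^{-1}}\bigr).
\end{equation*}
Bounding the sum factor by $(1+\Norm{M_1^{-1/2}(M_1+M_2)^{1/2}}_{\op})\Abs{f+g}_{(M_1+M_2)^{-1}}$ via \Cref{lem_linear_algebra} and the difference factor by $\Abs{f+g}_{(M_1+M_2)^{-1}}$ (using that the left side is itself bounded above by $\Abs{f+g}^2_{(M_1+M_2)^{-1}}$ times a smaller quantity, or by a more delicate direct comparison) and then integrating yields the advertised estimate. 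The main subtlety I anticipate is here: ensuring the constant comes out as $(1+C)$ rather than $(C^2-1)$ or $(1+C^2)$ requires choosing the right pairing between the two factors; if the cleanest path only gives the weaker $(C^2-1)$, this would still suffice for how the bound is used in \Cref{lem_L1error_best_misfit_enhanced_noise_misfit}, but I would first attempt a pointwise bound of the form $(M_1^{-1}-(M_1+M_2)^{-1})\preceq (M_1^{-1/2}(M_1+M_2)^{1/2})\cdot(M_1+M_2)^{-1}$ in the Loewner order, which is the most delicate step.
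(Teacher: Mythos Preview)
Your treatment of \eqref{eq_lem_L1norm_diff_quadratic_forms_same_weighting_matrix}, \eqref{eq_lem_L1norm_diff_quadratic_forms_different_weighting_matrix}, and \eqref{eq_bound_L1norm_g} is essentially the paper's own argument: the same inner-product identity $\Abs{f}^{2}_{M_1^{-1}}-\Abs{f+g}^{2}_{M_1^{-1}}=-\ang{g,2f+g}_{M_1^{-1}}$, Cauchy--Schwarz first in $(\bb{R}^d,\ang{\cdot,\cdot}_{M_1^{-1}})$ and then in $L^2_\mu$, the add-and-subtract decomposition for the mixed-weighting case, and the conversion of the $M_1^{-1}$-norm of $f+g$ into the $(M_1+M_2)^{-1}$-norm via \Cref{lem_linear_algebra}.

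For \eqref{eq_bound_L1norm_fplusg_difference_of_weighting_matrices} you are overcomplicating matters, and the specific step you sketch has a gap. Your bound on the difference factor, $\Abs{f+g}_{M_1^{-1}}-\Abs{f+g}_{(M_1+M_2)^{-1}}\leq \Abs{f+g}_{(M_1+M_2)^{-1}}$, is not justified and is false in general: from \Cref{lem_linear_algebra} one only gets $\Abs{f+g}_{M_1^{-1}}-\Abs{f+g}_{(M_1+M_2)^{-1}}\leq (C-1)\Abs{f+g}_{(M_1+M_2)^{-1}}$ with $C=\Norm{M_1^{-1/2}(M_1+M_2)^{1/2}}_{\op}$, and $C-1$ need not be $\leq 1$. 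The Loewner-order inequality you suggest at the end is not well-formed as stated and is not needed. The paper avoids factoring altogether and simply applies the triangle inequality in $L^1_\mu$ directly to the definition:
\begin{equation*}
\Norm{\Abs{f+g}^{2}_{M_1^{-1}-(M_1+M_2)^{-1}}}_{L^1_\mu}
=\Norm{\Abs{f+g}^{2}_{M_1^{-1}}-\Abs{f+g}^{2}_{(M_1+M_2)^{-1}}}_{L^1_\mu}
\leq \Norm{\Abs{f+g}^{2}_{M_1^{-1}}}_{L^1_\mu}+\Norm{\Abs{f+g}^{2}_{(M_1+M_2)^{-1}}}_{L^1_\mu},
\end{equation*}
and then bounds $\Norm{\Abs{f+g}^{2}_{M_1^{-1}}}_{L^1_\mu}$ using the pointwise estimate $\Abs{z}_{M_1^{-1}}\leq C\,\Abs{z}_{(M_1+M_2)^{-1}}$ from \Cref{lem_linear_algebra}. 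There is no ``delicate step''; the entire content of \eqref{eq_bound_L1norm_fplusg_difference_of_weighting_matrices} is one triangle inequality plus one norm comparison.
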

\begin{proof}
We claim that for arbitrary $a,b\in\bb{R}^{d}$, symmetric positive definite $M_1\in\bb{R}^{d\times d}$, and symmetric nonnegative definite $M_2\in\bb{R}^{d\times d}$,
 \begin{equation}
     \label{eq_difference_of_quadratic_forms}
     \Abs{a}^{2}_{M_1^{-1}}-\Abs{a+b}^{2}_{(M_1+M_2)^{-1}}=-\ang{b,2a+b}_{M_1^{-1}}+\Abs{a+b}^{2}_{M_1^{-1}-(M_1+M_2)^{-1}}.
 \end{equation}
 Recall that $(M_1+M_2)^{-1}$ exists and is positive definite, by \Cref{lem_linear_algebra}.

Using the matrix-weighted inner product and norm notation from \eqref{eq_matrix_weighted_inner_product_norm},
\begin{align*}
 &\Abs{a}^{2}_{M_1^{-1}}-\Abs{a+b}^{2}_{M_1^{-1}}
 \\
 =&a^\top M_1^{-1} a-(a+b)^\top M_1^{-1}(a+b) 
 \\
 = &a^\top M_1^{-1}a -(a^\top M_1^{-1} a+ 2a^\top M_1^{-1} b+b^\top M_1^{-1}b)
 \\
 =&-b^\top M_1^{-1}(2a+b)
 \\
 =&-\ang{b,2a+b}_{M_1^{-1}}.
 \end{align*}
 This implies \eqref{eq_difference_of_quadratic_forms}, since
 \begin{align*}
 &\Abs{a}_{M_1^{-1}}^{2}-\Abs{a+b}^{2}_{M_1^{-1}}+ \Abs{a+b}^{2}_{M_1^{-1}}-\Abs{a+b}^{2}_{(M_1+M_2)^{-1}}
 \\
 =&-\ang{b,2a+b}_{M_1^{-1}}+ \Abs{a+b}^{2}_{M_1^{-1}}-\Abs{a+b}^{2}_{(M_1+M_2)^{-1}}.
 \end{align*}
 Now let $f$, $g$, and $\mu$ be as in the statement of the lemma. 
 Then by \eqref{eq_difference_of_quadratic_forms} and the triangle inequality,
 \begin{align}
 \Norm{ \Abs{f}_{M_1^{-1}}^{2}-\Abs{f+g}_{(M_1+M_2)^{-1}}^{2} }_{L^1_\mu}=& \Norm{ -\ang{g,2f+g}_{M_1^{-1}}+\Abs{f+g}_{M_1^{-1}-(M_1+M_2)^{-1}}^{2} }_{L^1_\mu}
  \nonumber
  \\
  \leq & \Norm{\ang{g,2f+g}_{M_1^{-1}}}_{L^1_\mu}+\Norm{ \Abs{f+g}_{M_1^{-1}-(M_1+M_2)^{-1}}^{2} }_{L^1_\mu}.
  \label{lem_L1norm_diff_quadratic_forms_proof_eq_01}
 \end{align}
Next,
\begin{align}
 \Norm{\ang{g,2f+g}_{M_1^{-1}}}_{L^1_\mu} \leq& \Norm{ \Abs{g}_{M_1^{-1}}\Abs{2f+g}_{M_1^{-1}}}_{L^1_\mu} 
 \nonumber
 \\
 \leq & \Norm{ \Abs{g}_{M_1^{-1}}}_{L^2_\mu}\Norm{\Abs{2f+g}_{M_1^{-1}}}_{L^2_\mu}
 \nonumber
 \\
 \leq & \Norm{ \Abs{g}_{M_1^{-1}}}_{L^2_\mu}\left( \Norm{\Abs{f+g}_{M_1^{-1}}}_{L^2_\mu}+\Norm{\Abs{f}_{M_1^{-1}}}_{L^2_\mu}\right)
 \nonumber
 \\
 = & \Norm{ \Abs{g}_{M_1^{-1}}^{2}}_{L^1_\mu}^{1/2}\left( \Norm{\Abs{f+g}_{M_1^{-1}}^{2}}_{L^1_\mu}^{1/2}+\Norm{\Abs{f}_{M_1^{-1}}^{2}}_{L^1_\mu}^{1/2}\right).
\label{lem_L1norm_diff_quadratic_forms_proof_eq_02}
 \end{align}
The first and second inequalities follow by applying the Cauchy--Schwarz inequality with respect to $\ang{\cdot,\cdot}_{M_1^{-1}}$ and $\Norm{\cdot}_{L^1_{\mu}}$ respectively.
The third inequality and the equation follow from the $\Norm{\cdot}_{L^2_\mu}$-triangle inequality and the definition of the $L^p_\mu$ norm for $p=1,2$.
By \eqref{lem_L1norm_diff_quadratic_forms_proof_eq_02}, we bound the first term on the right-hand side of \eqref{lem_L1norm_diff_quadratic_forms_proof_eq_01}. 
By using $M_2\leftarrow 0$, the second term on the right-hand side of \eqref{lem_L1norm_diff_quadratic_forms_proof_eq_01} vanishes.
Thus \eqref{eq_lem_L1norm_diff_quadratic_forms_same_weighting_matrix} follows from \eqref{lem_L1norm_diff_quadratic_forms_proof_eq_01}.

Next, we bound the first term inside the parentheses on the right-hand side of \eqref{lem_L1norm_diff_quadratic_forms_proof_eq_02}.
By \eqref{eq_equivalence_of_matrix_induced_norms},
\begin{equation*}
 \Norm{\Abs{f+g}_{M_1^{-1}}^{2}}_{L^1_\mu}^{1/2} \leq  \Norm{M_1^{-1/2}(M_1+M_2)^{1/2}}_{\op} \Norm{\Abs{f+g}_{(M_1+M_2)^{-1}}^{2}}_{L^1_\mu}^{1/2}.
\end{equation*}
Using the above bound yields \eqref{eq_lem_L1norm_diff_quadratic_forms_different_weighting_matrix}.

To prove \eqref{eq_bound_L1norm_g},
\begin{align*}
 \Norm{ \Abs{g}^{2}_{M_1^{-1}}}_{L^1_\mu}^{1/2} =& \Norm{ \Abs{ -f+(f+g)}_{M_1^{-1}}}_{L^2_\mu}
 \\
 \leq& \Norm{\Abs{f}_{M_1^{-1}}}_{L^2_{\mu}}+\Norm{\Abs{f+g}_{M_1^{-1}}}_{L^2_\mu}
 \\
 \leq& \Norm{\Abs{f}_{M_1^{-1}}^{2}}_{L^1_{\mu}}^{1/2}+\Norm{M_1^{-1/2}(M_1+M_2)^{1/2}}_{\op}\Norm{\Abs{f+g}_{(M_1+M_2)^{-1}}^{2}}_{L^1_\mu}^{1/2}
\end{align*}
where the last inequality uses \eqref{eq_equivalence_of_matrix_induced_norms}.
To prove \eqref{eq_bound_L1norm_fplusg_difference_of_weighting_matrices}, observe that 
\begin{align*}
 \Norm{\Abs{f+g}_{M_1^{-1}-(M_1+M_2)^{-1}}^{2}}_{L^1_\mu}=&\Norm{\Abs{f+g}_{M_1^{-1}}^{2}-\Abs{f+g}_{(M_1+M_2)^{-1}}^{2}}_{L^1_\mu}
 \\
 \leq & \Norm{\Abs{f+g}_{M_1^{-1}}^{2}}_{L^1_\mu}+\Norm{\Abs{f+g}_{(M_1+M_2)^{-1}}^{2}}_{L^1_\mu}
 \\
 \leq & \left(\Norm{M_1^{-1/2}(M_1+M_2)^{1/2}}_{\op}+1\right)\Norm{\Abs{f+g}_{(M_1+M_2)^{-1}}^{2}}_{L^1_\mu}
 \end{align*}
 where the first and second inequality follow from the triangle inequality and \eqref{eq_equivalence_of_matrix_induced_norms}.
This completes the proof of \Cref{lem_L1norm_diff_quadratic_forms}.
\end{proof}
 
 \section{Deferred proofs}
 \label{sec_deferred_proofs}
 
 \subsection{Proof for approximate posterior}
 \label{ssec_proofs_KL_bound_approximate_posterior}
 
 \Cref{lem_L1error_best_misfit_approx_misfit} bounds $\Norm{ \Phi^{\data,\best} - \Phi^{\data,\appr} }_{L^{q}_{\mu_{\param}}}$ in terms of the observed model error $\obsoperator \modelerror^{\best}$, under the hypothesis that $\Phi^{\data,\best}\in L^1_{\mu_\param}$ and $\Phi^{\data,\appr}\in L^1_{\mu_\param}$.
The bound is given in \eqref{eq_L1error_best_misfit_approx_misfit}:
\begin{equation*}
% \label{eq_L1error_best_misfit_approx_misfit}
\Norm{ \Phi^{\data,\best} - \Phi^{\data,\appr} }_{L^{1}_{\mu_{\param}}}\leq 2^{-1/2} \Norm{ \Abs{\obsoperator \modelerror^{\best}}_{\Sigma_{\noise}^{-1}}}_{L^{2}_{\mu_\param}} \left( \Norm{ \Phi^{\data,\best}}^{1/2}_{L^{1}_{\mu_\param}}+\Norm{\Phi^{\data,\appr}}^{1/2}_{L^{1}_{\mu_\param}}\right).
\end{equation*}
\begin{proof}[Proof of \Cref{lem_L1error_best_misfit_approx_misfit}]
Recall from \eqref{eq_best_misfit_and_posterior} and \eqref{eq_approx_misfit_and_posterior} that $\Phi^{\data,\best}(\param')=\tfrac{1}{2}\Abs{\data-\obsoperator \model^{\best}(\param')}_{\Sigma_\noise^{-1}}^{2}$ and $\Phi^{\data,\appr}(\param')=\tfrac{1}{2}\Abs{\data-\obsoperator \model(\param')}_{\Sigma_\noise^{-1}}^{2}$ respectively.
By these definitions,
\begin{equation}
\label{eq_square_root_L1_norm_best_misfit}
\Norm{ 2\cdot\tfrac{1}{2} \Abs{y-\obsoperator  \model^{\best} }_{\Sigma_{\noise}^{-1}}^{2}}_{L^1_{\mu_\param}}^{1/2}= \Norm{ 2\Phi^{\data,\best}}_{L^1_{\mu_\param}}^{1/2}=\sqrt{2} \Norm{\Phi^{\data,\best}}_{L^1_{\mu_\param}}^{1/2}
\end{equation}
and similarly 
\begin{equation}
\label{eq_square_root_L1_norm_approx_misfit}
 \Norm{\Abs{\data-\obsoperator  \model}^{2}_{\Sigma_{\noise}^{-1}}}^{1/2}_{L^{1}_{\mu_\param}}=\sqrt{2} \Norm{\Phi^{\data,\appr}}^{1/2}_{L^{1}_{\mu_\param}}.
\end{equation}
Now set $f\leftarrow \data-\obsoperator \model^{\best}$, $g\leftarrow \obsoperator  \modelerror^{\best}$, $\mu\leftarrow \mu_\param$, $M_1\leftarrow \Sigma_\noise$, and $M_2\leftarrow 0$.
By \eqref{eq_model_error_definition}, we have $f+g=\data-\obsoperator  (\model^{\best}-\modelerror^{\best})=\data-\obsoperator   \model$. 
Hence $\Abs{f}_{M_1^{-1}}^{2}=2\Phi^{\data,\best}$ and $\Abs{f+g}_{M_1^{-1}}^{2}=2\Phi^{\data,\appr}$.
Applying \eqref{eq_lem_L1norm_diff_quadratic_forms_same_weighting_matrix} from \Cref{lem_L1norm_diff_quadratic_forms} with these choices, and then \eqref{eq_square_root_L1_norm_best_misfit} and \eqref{eq_square_root_L1_norm_approx_misfit}, yields 
\begin{align*}
 2\Norm{\Phi^{\data,\best}-\Phi^{\data,\appr}}_{L^1_{\mu_\param}}\leq &  \Norm{ \Abs{\obsoperator \modelerror^{\best}}_{\Sigma_{\noise}^{-1}}^{2}}_{L^{1}_{\mu_\param}}^{1/2} \left( \Norm{ \Abs{y-\obsoperator  \model^{\best} }_{\Sigma_{\noise}^{-1}}^{2}}_{L^1_{\mu_\param}}^{1/2}+\Norm{\Abs{y-\obsoperator  \model }_{\Sigma_{\noise}^{-1}}^{2}}_{L^1_{\mu_\param}}^{1/2}\right)
 \\
 =& \Norm{ \Abs{\obsoperator \modelerror^{\best}}_{\Sigma_{\noise}^{-1}}^{2}}_{L^{1}_{\mu_\param}}^{1/2} \sqrt{2}\left( \Norm{ \Phi^{\data,\best}}^{1/2}_{L^{1}_{\mu_\param}}+\Norm{\Phi^{\data,\appr}}^{1/2}_{L^{1}_{\mu_\param}}\right).
\end{align*}
This proves \eqref{eq_L1error_best_misfit_approx_misfit}.
The bound on $\Norm{ \Abs{\obsoperator \modelerror^{\best}}_{\Sigma_{\noise}^{-1}}^{2}}_{L^{1}_{\mu_\param}}^{1/2} $ in the statement of \Cref{lem_L1error_best_misfit_approx_misfit} follows from \eqref{eq_bound_L1norm_g} in \Cref{lem_L1norm_diff_quadratic_forms} with the choices above:
\begin{equation*}
 \Norm{ \Abs{\obsoperator \modelerror^{\best}}_{\Sigma_{\noise}^{-1}}^{2}}_{L^{1}_{\mu_\param}}^{1/2} \leq \sqrt{2}\left( \Norm{ \Phi^{\data,\best}}^{1/2}_{L^{1}_{\mu_\param}}+\Norm{\Phi^{\data,\appr}}^{1/2}_{L^{1}_{\mu_\param}}\right).
\end{equation*}
This completes the proof of \Cref{lem_L1error_best_misfit_approx_misfit}.
\end{proof}

\subsection{Proofs for enhanced noise posterior}
\label{ssec_proofs_KL_bound_enhanced_noise_posterior}

\Cref{lem_L1error_best_misfit_enhanced_noise_misfit} bounds $\Norm{ \Phi^{\data,\best} - \Phi^{\data,\enh} }_{L^1_{\mu_{\param}}}$ in terms of the observed model error $\obsoperator \modelerror^{\best}$ and the Gaussian model $\normaldist{m_{\enhancednoisemodel}}{\Sigma_{\enhancednoisemodel}}$ of $\modelerror^{\best}(\param^{\best})$.
In particular, if $\Phi^{\data,\best}\in L^1_{\mu_\param}$ and $\Phi^{\data,\enh}\in L^1_{\mu_\param}$, then for $C_{\enh}\coloneqq \Norm{\Sigma_\noise^{-1/2}( \enhancednoisecovariance)^{1/2}}$ as in \eqref{eq_equivalence_of_norms_enhanced_noise}, the bound \eqref{eq_L1error_best_misfit_enhanced_noise_misfit} 
\begin{align*}
\Norm{ \Phi^{\data,\best} - \Phi^{\data,\enh} }_{L^{1}_{\mu_{\param}}}\leq &  2^{-1/2} \Norm{ \Abs{ \obsoperator (\modelerror^{\best}-m_{\enhancednoisemodel})}_{\Sigma_{\noise}^{-1}}^{2}}_{L^1_{\mu_\param}}^{1/2} \left( \Norm{\Phi^{\data,\best}}^{1/2}_{L^1_{\mu_\param}}+ C_{\enh}\Norm{\Phi^{\data,\enh}}_{L^1_{\mu_\param}}^{1/2}\right)
% \label{eq_L1error_best_misfit_enhanced_noise_misfit}
\\
&+2^{-1}\Norm{ \Abs{\data   -\obsoperator \model-\obsoperator m_{\enhancednoisemodel}}^{2}_{ \Sigma_\noise^{-1}-(\enhancednoisecovariance)^{-1}}}_{L^1_{\mu_\param}},
% \nonumber
\end{align*}
holds, and all terms on the right-hand side are finite.

\begin{proof}[Proof of \Cref{lem_L1error_best_misfit_enhanced_noise_misfit}]
 In the same way that we proved \eqref{eq_square_root_L1_norm_best_misfit}, we can use the definition \eqref {eq_enhanced_noise_misfit_and_posterior} of $\Phi^{\data,\enh}$ to prove
\begin{equation}
\Norm{\Abs{\data-\obsoperator \model -\obsoperator m_{\enhancednoisemodel}}_{(\enhancednoisecovariance)^{-1}}^{2}}_{L^1_{\mu_\param}}^{1/2}
 =\sqrt{2}\Norm{\Phi^{\data,\enh}}_{L^1_{\mu_\param}}^{1/2}.
 \label{eq_square_root_L1_norm_enhanced_noise_misfit}
 \end{equation}
Let $f\leftarrow \data-\obsoperator  \model^{\best}$, $g\leftarrow \obsoperator   (\modelerror^{\dagger}-m_{\enhancednoisemodel})$, $\mu\leftarrow \mu_\param$, $M_1\leftarrow \Sigma_\noise$, and $M_2\leftarrow \obsoperator \Sigma_{\enhancednoisemodel}\obsoperator^{\ast}$.
Then 
\begin{equation*}
 f+g= \data -\obsoperator   (\model^{\best}-\modelerror^{\dagger})-\obsoperator m_{\enhancednoisemodel}=\data-\obsoperator   \model-\obsoperator m_{\enhancednoisemodel},
\end{equation*}
 $\Abs{f}_{M_1^{-1}}^{2}=2\Phi^{\data,\dagger}$, and $\Abs{f+g}_{(M_1+M_2)^{-1}}^{2}=2\Phi^{\data,\enh}$. 
Applying \eqref{eq_lem_L1norm_diff_quadratic_forms_different_weighting_matrix}  from \Cref{lem_L1norm_diff_quadratic_forms} yields 
\begin{align}
 2 \Norm{ \Phi^{\data,\best} - \Phi^{\data,\enh}}_{L^1_{\mu_\param}}\leq & \Norm{ \Abs{ \obsoperator (\modelerror^{\best} -m_{\enhancednoisemodel})}_{\Sigma_{\noise}^{-1}}^{2} }_{L^1_{\mu_\param}}^{1/2}\left( C_{\enh} \Norm{2\Phi^{\data,\enh}}_{L^1_{\mu_\param}}^{1/2}+\Norm{2\Phi^{\data,\best}}_{L^1_{\mu_\param}}^{1/2} \right)
\label{eq_intermediate02}
 \\
 & +\Norm{ \Abs{\data   -\obsoperator \model -\obsoperator m_{\enhancednoisemodel}}^{2}_{ \Sigma_\noise^{-1}-(\enhancednoisecovariance)^{-1}}}_{L^1_{\mu_\param}}.
\nonumber
 \end{align}
 This proves \eqref{eq_L1error_best_misfit_enhanced_noise_misfit}.
 By \eqref{eq_bound_L1norm_g} and $C_{\enh}$ as in \eqref{eq_equivalence_of_norms_enhanced_noise},
 \begin{equation*}
  \Norm{ \Abs{ \obsoperator (\modelerror^{\best} -m_{\enhancednoisemodel})}_{\Sigma_{\noise}^{-1}}^{2} }_{L^1_{\mu_\param}}^{1/2} \leq \Norm{2\Phi^{\data,\best}}_{L^1_{\mu_\param}}^{1/2}+C_{\enh}\Norm{2\Phi^{\data,\enh}}_{L^1_{\mu_\param}}^{1/2}.
 \end{equation*}
 By \eqref{eq_bound_L1norm_fplusg_difference_of_weighting_matrices} from \Cref{lem_L1norm_diff_quadratic_forms},
 \begin{equation*}
 \Norm{ \Abs{\data   -\obsoperator \model -\obsoperator m_{\enhancednoisemodel}}^{2}_{ \Sigma_\noise^{-1}-(\enhancednoisecovariance)^{-1}}}_{L^1_{\mu_\param}}\leq (C_{\enh}+1)\Norm{2\Phi^{\data,\enh}}_{L^1_{\mu_\param}}.
 \end{equation*}
 This completes the proof of \Cref{lem_L1error_best_misfit_enhanced_noise_misfit}.
\end{proof}

\Cref{lem_L1norm_diff_Phi_approx_minus_Phi_enhanced} asserts that, under the hypotheses that $\Phi^{\data,\appr}\in L^1_{\mu_\param}$ and $\Phi^{\data,\enh}\in L^1_{\mu_\param}$, then for $C_{\enh}\coloneqq \Norm{\Sigma_\noise^{-1/2}(\enhancednoisecovariance)^{1/2}}_{\op}$ as in \eqref{eq_equivalence_of_norms_enhanced_noise}, the bound \eqref{eq_L1norm_diff_Phi_approx_minus_Phi_enhanced} 
\begin{align*}
\Norm{ \Phi^{\data,\appr} - \Phi^{\data,\enh} }_{L^{1}_{\mu_{\param}}}\leq &  2^{-1/2} \Abs{ \obsoperator m_{\enhancednoisemodel}}_{\Sigma_{\noise}^{-1}} \left( \Norm{\Phi^{\data,\appr}}^{1/2}_{L^1_{\mu_\param}}+ C_{\enh}\Norm{\Phi^{\data,\enh}}_{L^1_{\mu_\param}}^{1/2}\right)
% \label{eq_L1norm_diff_Phi_approx_minus_Phi_enhanced}
\\
&+2^{-1}\Norm{ \Abs{\data   -\obsoperator \model-\obsoperator m_{\enhancednoisemodel}}^{2}_{ \Sigma_\noise^{-1}-(\enhancednoisecovariance)^{-1}}}_{L^1_{\mu_\param}},
% \nonumber
\end{align*}
holds, and all terms on the right-hand side are finite.
\begin{proof}[Proof of \Cref{lem_L1norm_diff_Phi_approx_minus_Phi_enhanced}]
Let $f\leftarrow  \data - \obsoperator  \model$, $g\leftarrow -\obsoperator m_{\enhancednoisemodel}$, $M_1\leftarrow \Sigma_\noise^{-1}$, $M_2\leftarrow \obsoperator \Sigma_{\enhancednoisemodel} \obsoperator^{\ast}$, and $\mu\leftarrow \mu_\param$.
Then $\Abs{f}_{M_1^{-1}}^{2}=2\Phi^{\data,\appr}$ and $\Abs{f+g}_{(M_1+M_2)^{-1}}^{2}=2\Phi^{\data,\enh}$.
Applying \eqref{eq_lem_L1norm_diff_quadratic_forms_different_weighting_matrix} from \Cref{lem_L1norm_diff_quadratic_forms} yields 
\begin{align*}
 &2 \Norm{ \Phi^{\data,\appr} - \Phi^{\data,\enh}}_{L^1_{\mu_\param}}
\\
\leq & \Abs{ \obsoperator m_{\enhancednoisemodel}}_{\Sigma_{\noise}^{-1}}\sqrt{2}\left( \Norm{\Phi^{\data,\appr}}^{1/2}_{L^1_{\mu_\param}}+C_{\enh} \Norm{ \Phi^{\data,\enh}}^{1/2}_{L^1_{\mu_\param}}\right) +\Norm{ \Abs{\data   -\obsoperator \model -\obsoperator m_{\enhancednoisemodel}}^{2}_{ \Sigma_\noise^{-1}-(\enhancednoisecovariance)^{-1}}}_{L^1_{\mu_\param}},
 \end{align*}
  for $C_{\enh}\coloneqq \Norm{\Sigma_\noise^{-1/2}(\enhancednoisecovariance)^{1/2}}_{\op}$ as in \eqref{eq_equivalence_of_norms_enhanced_noise}.
 This proves \eqref{eq_L1norm_diff_Phi_approx_minus_Phi_enhanced}.
 Next, \eqref{eq_bound_L1norm_fplusg_difference_of_weighting_matrices} yields
 \begin{equation*}
  \Norm{ \Abs{\data   -\obsoperator \model -\obsoperator m_{\enhancednoisemodel}}^{2}_{ \Sigma_\noise^{-1}-(\enhancednoisecovariance)^{-1}}}_{L^1_{\mu_\param}}\leq 2 \left(C_{\enh}+1\right)\Norm{\Phi^{\data,\enh}}_{L^1_{\mu_\param}}.
 \end{equation*}
This completes the proof of \Cref{lem_L1norm_diff_Phi_approx_minus_Phi_enhanced}.
\end{proof}

\subsection{Proofs for joint posterior}
\label{ssec_proofs_KL_bound_joint_posterior}

In \Cref{lem_L1norm_diff_Phi_best_minus_Phi_joint}, one assumes that $\Phi^{\data,\best}$ as defined in \eqref{eq_best_misfit_and_posterior} belongs to $ L^1_{\mu_\param }$, and also that $\Phi^{\data,\joint}\in L^1_{\mu_\param\otimes\mu_\modelerror}$.
The resulting bound \eqref{eq_L1norm_diff_Phi_best_minus_Phi_joint} is
 \begin{equation*}
  \Norm{\Phi^{\data,\best}-\Phi^{\data,\joint}}_{L^1_{\mu_\param\otimes\mu_\modelerror}}\leq 2^{-1/2} \Norm{ \Abs{\obsoperator (\modelerror^{\best}-\modelerror)(\param)}_{\Sigma_\noise^{-1}}^{2}}_{L^1_{\bb{P}}}^{1/2} \left( \Norm{\Phi^{\data,\joint}}_{L^1_{\mu_\param\otimes\mu_\modelerror}}^{1/2}+ \Norm{\Phi^{\data,\best}}_{L^1_{\mu_\param}}^{1/2}\right).
 \end{equation*}
\begin{proof}[Proof of \Cref{lem_L1norm_diff_Phi_best_minus_Phi_joint}]
 Let $f(\param)\leftarrow \data-\obsoperator \model^{\best}(\param)$, $g(\param,\modelerror)\leftarrow \obsoperator  (\modelerror^{\best}-\modelerror)(\param)$, $M_1\leftarrow \Sigma_\noise$, $M_2\leftarrow 0$, and $\mu\leftarrow\bb{P}$.
 Then $\Abs{f(\param)}_{M_1^{-1}}^{2}=2\Phi^{\data,\best}(\param,\modelerror)$ and $\Abs{f(\param)+g(\param,\modelerror)}_{M_1^{-1}}^{2}=2\Phi^{\data,\joint}(\param,\modelerror)$.
Using the same argument that we used to prove \eqref{eq_square_root_L1_norm_best_misfit}, it follows from \eqref {eq_joint_misfit_and_posterior} that
\begin{equation}
 \label{eq_square_root_L1_norm_joint_inference_misfit}
 \Norm{ \Abs{ \data-\obsoperator  \model(\param)-\obsoperator   \modelerror(\param)}_{\Sigma_\noise^{-1}}^{2}}_{L^1_{\bb{P}}}^{1/2}=\sqrt{2}\Norm{\Phi^{\data,\joint}}_{L^1_{\mu_\param\otimes\mu_\modelerror}}^{1/2}.
\end{equation}
By \eqref{eq_square_root_L1_norm_best_misfit} and \eqref{eq_lifted_misfits_same_norms} from \Cref{lem_properties_of_lifted_placeholder_objects},
\begin{equation}
\label{eq_square_root_L1_norm_best_misfit_lifted}
 \Norm{\Abs{\data-\obsoperator \model^{\best}(\param)}_{\Sigma_\noise^{-1}}^{2}}_{L^1_{\bb{P}}}^{1/2}=\sqrt{2}\Norm{\Phi^{\data,\best}}_{L^1_{\mu_\param}}^{1/2}=\sqrt{2}\Norm{\Phi^{\data,\best}}_{L^1_{\mu_\param\otimes\mu_\modelerror}}^{1/2}.
\end{equation}
Applying \eqref{eq_lem_L1norm_diff_quadratic_forms_same_weighting_matrix} \Cref{lem_L1norm_diff_quadratic_forms} with these choices yields \eqref{eq_L1norm_diff_Phi_best_minus_Phi_joint}:
\begin{align*}
 2\Norm{\Phi^{\data,\appr}-\Phi^{\data,\joint}}_{L^1_{\mu_\param\otimes\mu_\modelerror}} \leq & \Norm{ \Abs{\obsoperator (\modelerror^{\best}-\modelerror)(\param)}_{\Sigma_\noise^{-1}}^{2}}_{L^1_{\bb{P}}}^{1/2} \sqrt{2}\left( \Norm{\Phi^{\data,\joint}}_{L^1_{\mu_\param\otimes\mu_\modelerror}}^{1/2}+ \Norm{\Phi^{\data,\best}}_{L^1_{\mu_\param}}^{1/2}\right).
\end{align*}
Next,
\begin{equation*}
  \Norm{ \Abs{\obsoperator (\modelerror^{\best}-\modelerror)(\param)}_{\Sigma_\noise^{-1}}^{2}}_{L^1_{\bb{P}}}^{1/2} \leq\sqrt{2}\left( \Norm{\Phi^{\data,\joint}}_{L^1_{\mu_\param\otimes\mu_\modelerror}}^{1/2}+ \Norm{\Phi^{\data,\best}}_{L^1_{\mu_\param}}^{1/2}\right),
 \end{equation*}
 follows from \eqref{eq_square_root_L1_norm_joint_inference_misfit}, \eqref{eq_square_root_L1_norm_best_misfit_lifted}, and \eqref{eq_bound_L1norm_g} of \Cref{lem_L1norm_diff_quadratic_forms} with the choices stated above.
\end{proof}

In \Cref{lem_L1norm_diff_Phi_approx_minus_Phi_joint}, one assumes that $\Phi^{\data,\appr}$ as defined in \eqref{eq_approx_misfit_and_posterior} belongs to $ L^1_{\mu_\param }$, and also that $\Phi^{\data,\joint}\in L^1_{\mu_\param\otimes\mu_\modelerror}$.
The resulting bound \eqref{eq_L1norm_diff_Phi_approx_minus_Phi_joint} is
 \begin{equation*}
  \Norm{\Phi^{\data,\appr}-\Phi^{\data,\joint}}_{L^1_{\mu_\param\otimes\mu_\modelerror}}\leq 2^{-1/2}\Norm{ \Abs{\obsoperator \modelerror(\param)}_{\Sigma_\noise^{-1}}^{2}}_{L^1_{\bb{P}}}^{1/2} \left( \Norm{\Phi^{\data,\joint}}_{L^1_{\mu_\param\otimes\mu_\modelerror}}^{1/2}+ \Norm{\Phi^{\data,\appr}}_{L^1_{\mu_\param}}^{1/2}\right).
 \end{equation*}
 The proof of \Cref{lem_L1norm_diff_Phi_approx_minus_Phi_joint} is very similar to the proof of \Cref{lem_L1norm_diff_Phi_best_minus_Phi_joint} above. 
\begin{proof}[Proof of \Cref{lem_L1norm_diff_Phi_approx_minus_Phi_joint}]
 Let $f(\param)\leftarrow \data-\obsoperator \model(\param)$, $g(\param,\modelerror)\leftarrow \obsoperator  (-\modelerror)(\param)$, $M_1\leftarrow \Sigma_\noise$, $M_2\leftarrow 0$, and $\mu \leftarrow \bb{P}$.
 Then $\Abs{f(\param)+g(\param,\modelerror)}_{M_1^{-1}}^{2}=2\Phi^{\data,\joint}(\param,\modelerror)$ and $\Abs{f(\param)}_{M_1^{-1}}^{2}=2\Phi^{\data,\appr}(\param,\modelerror)$.
 Analogously to \eqref{eq_square_root_L1_norm_best_misfit_lifted}, we have by \eqref{eq_approx_misfit_and_posterior} and \eqref{eq_lifted_misfits_same_norms} of \Cref{lem_properties_of_lifted_placeholder_objects} that
\begin{equation}
\label{eq_square_root_L1_norm_approx_misfit_lifted}
  \Norm{\Abs{\data-\obsoperator \model(\param)}_{\Sigma_\noise^{-1}}^{2}}_{L^1_{\bb{P}}}^{1/2}=\sqrt{2}\Norm{\Phi^{\data,\appr}}_{L^1_{\mu_\param}}^{1/2}=\sqrt{2}\Norm{\Phi^{\data,\appr}}_{L^1_{\mu_\param\otimes\mu_\modelerror}}^{1/2}.
\end{equation}
Applying \eqref{eq_lem_L1norm_diff_quadratic_forms_same_weighting_matrix} of \Cref{lem_L1norm_diff_quadratic_forms} with the choices above yields \eqref{eq_L1norm_diff_Phi_approx_minus_Phi_joint}:
\begin{align*}
 2\Norm{\Phi^{\data,\appr}-\Phi^{\data,\joint}}_{L^1_{\mu_\param\otimes\mu_\modelerror}} \leq & \Norm{ \Abs{\obsoperator (-\modelerror)(\param)}_{\Sigma_\noise^{-1}}^{2}}_{L^1_{\bb{P}}}^{1/2} \sqrt{2}\left( \Norm{\Phi^{\data,\joint}}_{L^1_{\mu_\param\otimes\mu_\modelerror}}^{1/2}+ \Norm{\Phi^{\data,\appr}}_{L^1_{\mu_\param}}^{1/2}\right).
\end{align*}
Next,
 \begin{equation*}
  \Norm{ \Abs{\obsoperator (-\modelerror)(\param)}_{\Sigma_\noise^{-1}}^{2}}_{L^1_{\bb{P}}}^{1/2}
  \leq \sqrt{2}\left(\Norm{\Phi^{\data,\appr}}_{L^1_{\mu_\param}}^{1/2} +\Norm{\Phi^{\data,\joint}}_{L^1_{\mu_\param\otimes\mu_\modelerror}}^{1/2} \right)
 \end{equation*}
follows from \eqref{eq_square_root_L1_norm_joint_inference_misfit}, \eqref{eq_square_root_L1_norm_approx_misfit_lifted}, and \eqref{eq_bound_L1norm_g} of \Cref{lem_L1norm_diff_quadratic_forms}.
\end{proof}

\begin{lemma}
 \label{lem_L1norm_diff_Phi_enhanced_minus_Phi_joint}
 Let $\Phi^{\data,\enh}$ be defined as in \eqref{eq_lifted_misfit_and_posterior} with $\bullet=\enh$.
 If $\Phi^{\data,\enh}\in L^1_{\mu_\param}$ and $\Phi^{\data,\joint}\in L^1_{\mu_\param\otimes\mu_\modelerror}$, then
 \begin{align}
 \label{eq_L1norm_diff_Phi_enhanced_minus_Phi_joint}
  \Norm{\Phi^{\data,\enh}-\Phi^{\data,\joint}}_{L^1_{\mu_\param\otimes\mu_\modelerror}}\leq& 2^{-1/2}\Norm{ \Abs{\obsoperator (\modelerror(\param)-m_{\enhancednoisemodel})}_{\Sigma_\noise^{-1}}^{2}}_{L^1_{\bb{P}}}^{1/2}\left( C_{\enh} \Norm{\Phi^{\data,\enh}}_{L^1_{\mu_\param}}^{1/2}+\Norm{\Phi^{\data,\joint}}_{L^1_{\mu_\param\otimes\mu_\modelerror}}^{1/2}\right)
 \\
 &+ 2^{-1}\Norm{ \Abs{ \data-\obsoperator  \model(\param)-\obsoperator m_{\enhancednoisemodel}}_{\Sigma_\noise^{-1}-(\enhancednoisecovariance)^{-1}}^{2}}_{L^1_{\bb{P}}}.
 \nonumber
 \end{align}
 Furthermore,
 \begin{align*}
 &\Norm{ \Abs{\obsoperator (\modelerror(\param)-m_{\enhancednoisemodel})}_{\Sigma_\noise^{-1}}^{2}}_{L^1_{\bb{P}}}^{1/2}\leq \sqrt{2}\left(C_{\enh}\Norm{\Phi^{\data,\enh}}_{L^1_{\mu_\param}}^{1/2}+ \Norm{\Phi^{\data,\joint}}_{L^1_{\mu_\param\otimes \mu_\modelerror}}^{1/2}\right) 
 \\
 &\Norm{ \Abs{ \data-\obsoperator  \model(\param)-\obsoperator m_{\enhancednoisemodel}}_{\Sigma_\noise^{-1}-(\enhancednoisecovariance)^{-1}}^{2}}_{L^1_{\bb{P}}}\leq 2(C_{\enh}+1)\Norm{\Phi^{\data,\enh}}_{L^1_{\mu_\param}}.
 \end{align*}
\end{lemma}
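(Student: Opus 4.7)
The plan is to prove \Cref{lem_L1norm_diff_Phi_enhanced_minus_Phi_joint} by mimicking the strategy used for the analogous bounds in \Cref{lem_L1error_best_misfit_enhanced_noise_misfit} and \Cref{lem_L1norm_diff_Phi_approx_minus_Phi_enhanced}, namely by reducing to the elementary bound \eqref{eq_lem_L1norm_diff_quadratic_forms_different_weighting_matrix} of \Cref{lem_L1norm_diff_quadratic_forms} after recognising $2\Phi^{\data,\enh}$ and $2\Phi^{\data,\joint}$ as quadratic forms weighted by the two different matrices $\enhancednoisecovariance$ and $\Sigma_\noise$. The role of the additive correction $M_2$ in \Cref{lem_L1norm_diff_quadratic_forms} is played here by $\obsoperator\Sigma_\enhancednoisemodel\obsoperator^\ast$, and the operator norm $C_\enh$ defined in \eqref{eq_equivalence_of_norms_enhanced_noise} will appear through that lemma's factor $\Norm{M_1^{-1/2}(M_1+M_2)^{1/2}}_\op$.

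Concretely, I would set $M_1\leftarrow \Sigma_\noise$, $M_2\leftarrow \obsoperator\Sigma_\enhancednoisemodel\obsoperator^\ast$, $\mu\leftarrow \bb{P}$, and
\begin{equation*}
f(\param,\modelerror) \leftarrow \data-\obsoperator\model(\param)-\obsoperator\modelerror(\param),\qquad g(\param,\modelerror)\leftarrow \obsoperator(\modelerror(\param)-m_\enhancednoisemodel).
\end{equation*}
Then $f+g = \data-\obsoperator\model(\param)-\obsoperator m_\enhancednoisemodel$, and the definitions \eqref{eq_joint_misfit_and_posterior} and \eqref{eq_enhanced_noise_misfit_and_posterior} (combined with the lifting convention in \eqref{eq_lifted_misfit_and_posterior}) give $\Abs{f}_{M_1^{-1}}^2 = 2\Phi^{\data,\joint}$ and $\Abs{f+g}_{(M_1+M_2)^{-1}}^2 = 2\Phi^{\data,\enh}$. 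The $L^1_\bb{P}$-norms of these two quantities equal $2\Norm{\Phi^{\data,\joint}}_{L^1_{\mu_\param\otimes\mu_\modelerror}}$ and $2\Norm{\Phi^{\data,\enh}}_{L^1_{\mu_\param}}$ respectively, where the latter uses \eqref{eq_lifted_misfits_same_norms} from \Cref{lem_properties_of_lifted_placeholder_objects}. Plugging into \eqref{eq_lem_L1norm_diff_quadratic_forms_different_weighting_matrix}, dividing by two, and recognising $C_\enh$ as $\Norm{M_1^{-1/2}(M_1+M_2)^{1/2}}_\op$ yields \eqref{eq_L1norm_diff_Phi_enhanced_minus_Phi_joint}.

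The two auxiliary estimates follow immediately from \eqref{eq_bound_L1norm_g} and \eqref{eq_bound_L1norm_fplusg_difference_of_weighting_matrices} of \Cref{lem_L1norm_diff_quadratic_forms} applied to the same quadruple $(f,g,M_1,M_2)$. The first yields $\Norm{\Abs{\obsoperator(\modelerror(\param)-m_\enhancednoisemodel)}_{\Sigma_\noise^{-1}}^2}_{L^1_\bb{P}}^{1/2} \leq \sqrt{2}\Norm{\Phi^{\data,\joint}}_{L^1_{\mu_\param\otimes\mu_\modelerror}}^{1/2}+\sqrt{2}C_\enh\Norm{\Phi^{\data,\enh}}_{L^1_{\mu_\param}}^{1/2}$, and the second gives $\Norm{\Abs{\data-\obsoperator\model(\param)-\obsoperator m_\enhancednoisemodel}^2_{\Sigma_\noise^{-1}-(\enhancednoisecovariance)^{-1}}}_{L^1_\bb{P}} \leq (C_\enh+1)\cdot 2\Norm{\Phi^{\data,\enh}}_{L^1_{\mu_\param}}$, matching the stated bounds.

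There is essentially no conceptual obstacle — the proof is structurally identical to that of \Cref{lem_L1error_best_misfit_enhanced_noise_misfit}. The only mild bookkeeping point is the switch of domains: $\Phi^{\data,\joint}$ lives on $\paramspace\times\modelerrorspace$ while $\Phi^{\data,\enh}$ is lifted there via \eqref{eq_lifted_misfit_and_posterior}, so one must consistently take $L^1_\bb{P}$ norms of the quadratic-form integrands and invoke \eqref{eq_lifted_misfits_same_norms} to re-express $\Norm{\Phi^{\data,\enh}}_{L^1_{\mu_\param\otimes\mu_\modelerror}}$ as $\Norm{\Phi^{\data,\enh}}_{L^1_{\mu_\param}}$, so that all terms appear in the form stated in the lemma.
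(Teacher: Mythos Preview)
Your proposal is correct and matches the paper's proof essentially line for line: the paper makes the identical substitutions $f\leftarrow \data-\obsoperator(\model(\param)+\modelerror(\param))$, $g\leftarrow \obsoperator(\modelerror(\param)-m_\enhancednoisemodel)$, $M_1\leftarrow\Sigma_\noise$, $M_2\leftarrow\obsoperator\Sigma_\enhancednoisemodel\obsoperator^\ast$, $\mu\leftarrow\bb{P}$, applies \eqref{eq_lem_L1norm_diff_quadratic_forms_different_weighting_matrix}, \eqref{eq_bound_L1norm_g}, and \eqref{eq_bound_L1norm_fplusg_difference_of_weighting_matrices} of \Cref{lem_L1norm_diff_quadratic_forms}, and invokes \eqref{eq_lifted_misfits_same_norms} to pass from $\Norm{\Phi^{\data,\enh}}_{L^1_{\mu_\param\otimes\mu_\modelerror}}$ to $\Norm{\Phi^{\data,\enh}}_{L^1_{\mu_\param}}$.
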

\begin{proof}[Proof of \Cref{lem_L1norm_diff_Phi_enhanced_minus_Phi_joint}]
 Let $f(\param,\modelerror)\leftarrow \data-\obsoperator (\model(\param)+\modelerror(\param))$, $g(\param,\modelerror)\leftarrow \obsoperator  (\modelerror(\param)-m_{\enhancednoisemodel})$, $M_1\leftarrow \Sigma_\noise$, $M_2\leftarrow \obsoperator \Sigma_{\enhancednoisemodel}\obsoperator^{\ast}$, and $\mu\leftarrow \bb{P}$.
Then $\Abs{f(\param,\modelerror)}_{M_1^{-1}}^{2}=2\Phi^{\data,\joint}(\param,\modelerror)$ and $\Abs{(f+g)(\param,\modelerror)}_{(M_1+M_2)^{-1}}^{2}=2\Phi^{\data,\enh}(\param,\modelerror)$.
Analogously to \eqref{eq_square_root_L1_norm_best_misfit_lifted}, we have by \eqref {eq_enhanced_noise_misfit_and_posterior} and \eqref{eq_lifted_misfits_same_norms} of \Cref{lem_properties_of_lifted_placeholder_objects} that
\begin{equation}
\label{eq_square_root_L1_norm_enhanced_noise_misfit_lifted}
  \Norm{\Abs{\data-\obsoperator \model(\param)-\obsoperator  m_{\enhancednoisemodel}}_{(\enhancednoisecovariance)^{-1}}^{2}}_{L^1_{\bb{P}}}^{1/2}=\sqrt{2}\Norm{\Phi^{\data,\enh}}_{L^1_{\mu_\param}}^{1/2}=\sqrt{2}\Norm{\Phi^{\data,\enh}}_{L^1_{\mu_\param\otimes\mu_\modelerror}}^{1/2}.
\end{equation}
Applying \eqref{eq_lem_L1norm_diff_quadratic_forms_different_weighting_matrix} of \Cref{lem_L1norm_diff_quadratic_forms} with the choices above yields \eqref{eq_L1norm_diff_Phi_enhanced_minus_Phi_joint}:
\begin{align*}
 2\Norm{\Phi^{\data,\enh}-\Phi^{\data,\joint}}_{L^1_{\mu_\param\otimes\mu_\modelerror}} \leq & \Norm{ \Abs{\obsoperator (\modelerror(\param)-m_{\enhancednoisemodel})}_{\Sigma_\noise^{-1}}^{2}}_{L^1_{\bb{P}}}^{1/2} \left( C_{\enh} \Norm{2\Phi^{\data,\enh}}_{L^1_{\mu_\param}}^{1/2}+\Norm{2\Phi^{\data,\joint}}_{L^1_{\mu_\param\otimes\mu_\modelerror}}^{1/2}\right)
 \\
 &+ \Norm{ \Abs{ \data-\obsoperator  \model(\param)-\obsoperator m_{\enhancednoisemodel}}_{\Sigma_\noise^{-1}-(\enhancednoisecovariance)^{-1}}^{2}}_{L^1_{\bb{P}}}.
\end{align*}
Next, we apply \eqref{eq_bound_L1norm_g} and \eqref{eq_bound_L1norm_fplusg_difference_of_weighting_matrices} from \Cref{lem_L1norm_diff_quadratic_forms}, and use \eqref{eq_square_root_L1_norm_enhanced_noise_misfit_lifted}:
\begin{align*}
 &\Norm{ \Abs{\obsoperator (\modelerror(\param)-m_{\enhancednoisemodel})}_{\Sigma_\noise^{-1}}^{2}}_{L^1_{\bb{P}}}^{1/2} \leq \sqrt{2}\left(C_{\enh}\Norm{\Phi^{\data,\enh}}_{L^1_{\mu_\param}}^{1/2}+ \Norm{\Phi^{\data,\joint}}_{L^1_{\mu_\param\otimes \mu_\modelerror}}^{1/2}\right) 
 \\
 &\Norm{ \Abs{ \data-\obsoperator  \model(\param)-\obsoperator m_{\enhancednoisemodel}}_{\Sigma_\noise^{-1}-(\enhancednoisecovariance)^{-1}}^{2}}_{L^1_{\bb{P}}}\leq 2(C_{\enh}+1)\Norm{\Phi^{\data,\enh}}_{L^1_{\mu_\param}}.
\end{align*}
This completes the proof of \Cref{lem_L1norm_diff_Phi_enhanced_minus_Phi_joint}.
\end{proof}

\begin{proposition}
 \label{prop_KL_divergence_enhanced_noise_posterior_joint_posterior}
 Let $\Phi^{\data,\enh}$ and $\Phi^{\data,\joint}$ be as in \Cref{lem_L1norm_diff_Phi_enhanced_minus_Phi_joint}, and let $\mu^{\data,\enh}_{\param,\modelerror}$ be as in \eqref{eq_lifted_misfit_and_posterior} with $\bullet=\enh$.
 Then 
 \begin{equation*}
  \max\{ d_{\KL}(\mu^{\data,\enh}_{\param,\modelerror}\Vert\mu^{\data,\joint}_{\param,\modelerror}), d_{\KL}(\mu^{\data,\joint}_{\param,\modelerror}\Vert \mu^{\data,\enh}_{\param,\modelerror})\}
  \leq C  \Norm{ \Abs{\obsoperator (m_{\enhancednoisemodel}-\modelerror)(\param)}_{\Sigma_\noise^{-1}}^{2}}_{L^1_{\bb{P}}}^{1/2},
 \end{equation*}
 where
 \begin{equation*}
  C= \exp\left(2 \Norm{\Phi^{\data,\joint}}_{L^1_{\mu_\param\otimes\mu_\modelerror}}+2\Norm{\Phi^{\data,\enh}}_{L^1_{\mu_\param}}\right) \max\left\{ 1, 2^{1/2} \left( C_{\enh} \Norm{\Phi^{\data,\enh}}_{L^1_{\mu_\param\otimes\mu_\modelerror}}^{1/2}+\Norm{\Phi^{\data,\joint}}_{L^1_{\mu_\param\otimes\mu_\modelerror}}^{1/2}\right)\right\}.
 \end{equation*}
\end{proposition}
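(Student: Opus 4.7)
The plan is to follow the same two-step template used for Propositions \ref{prop_KL_divergence_best_posterior_enhanced_noise_posterior} and \ref{prop_KL_divergence_approx_posterior_enhanced_noise_posterior}: combine the local Lipschitz stability estimate of \Cref{thm_theorem11_and_proposition6_Sprungk2020} with the misfit bound of \Cref{lem_L1norm_diff_Phi_enhanced_minus_Phi_joint}. Concretely, I would first apply \Cref{thm_theorem11_and_proposition6_Sprungk2020} with $\Phi^{(1)}\leftarrow \Phi^{\data,\enh}$, $\Phi^{(2)}\leftarrow \Phi^{\data,\joint}$ and $\mu\leftarrow \mu_\param\otimes\mu_\modelerror$, which is legal because both lifted misfits are defined on the common product space $\paramspace\times\modelerrorspace$ and are $L^1_{\mu_\param\otimes\mu_\modelerror}$-integrable by hypothesis (using \Cref{lem_properties_of_lifted_placeholder_objects} to see that $\Phi^{\data,\enh}\in L^1_{\mu_\param}$ implies $\Phi^{\data,\enh}\in L^1_{\mu_\param\otimes\mu_\modelerror}$ with the same norm). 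This yields
\begin{equation*}
\max\{d_{\KL}(\mu^{\data,\enh}_{\param,\modelerror}\Vert\mu^{\data,\joint}_{\param,\modelerror}),\, d_{\KL}(\mu^{\data,\joint}_{\param,\modelerror}\Vert\mu^{\data,\enh}_{\param,\modelerror})\}\leq 2\exp\!\left(2\Norm{\Phi^{\data,\joint}}_{L^1_{\mu_\param\otimes\mu_\modelerror}}+2\Norm{\Phi^{\data,\enh}}_{L^1_{\mu_\param}}\right)\Norm{\Phi^{\data,\enh}-\Phi^{\data,\joint}}_{L^1_{\mu_\param\otimes\mu_\modelerror}}.
\end{equation*}

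Next, I would substitute the bound from \Cref{lem_L1norm_diff_Phi_enhanced_minus_Phi_joint} for $\Norm{\Phi^{\data,\enh}-\Phi^{\data,\joint}}_{L^1_{\mu_\param\otimes\mu_\modelerror}}$. That bound naturally produces two summands: the $\Sigma_\noise^{-1}$-weighted shifted observed model error $\Norm{\Abs{\obsoperator(\modelerror(\param)-m_\enhancednoisemodel)}_{\Sigma_\noise^{-1}}^{2}}_{L^1_{\bb{P}}}^{1/2}$ (multiplied by $2^{-1/2}(C_\enh\Norm{\Phi^{\data,\enh}}_{L^1_{\mu_\param}}^{1/2}+\Norm{\Phi^{\data,\joint}}_{L^1_{\mu_\param\otimes\mu_\modelerror}}^{1/2})$), and the residual term weighted by the difference $\Sigma_\noise^{-1}-(\enhancednoisecovariance)^{-1}$ (with prefactor $2^{-1}$). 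I would then bound these two contributions uniformly by pulling out the $\max\{1,\,\sqrt 2(C_\enh\Norm{\Phi^{\data,\enh}}_{L^1_{\mu_\param\otimes\mu_\modelerror}}^{1/2}+\Norm{\Phi^{\data,\joint}}_{L^1_{\mu_\param\otimes\mu_\modelerror}}^{1/2})\}$ factor present in the constant $C$, exactly as is done in the proof of \Cref{prop_KL_divergence_best_posterior_enhanced_noise_posterior}.

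The main obstacle is essentially bookkeeping: unlike the pure-approximation case of \Cref{ssec_KL_bound_approximate_posterior}, the enhanced-noise vs.\ joint comparison involves two different weighting matrices ($\Sigma_\noise$ against $\enhancednoisecovariance$), so the Woodbury-type control of their difference (encoded in the constant $C_\enh$ of \eqref{eq_equivalence_of_norms_enhanced_noise} and in \Cref{lem_linear_algebra}) enters both the upper-bounding of the first summand and the secondary estimate in \Cref{lem_L1norm_diff_Phi_enhanced_minus_Phi_joint}. Beyond tracking these constants, no new idea is required — the finiteness of every term follows from the standing hypotheses $\Phi^{\data,\enh}\in L^1_{\mu_\param}$ and $\Phi^{\data,\joint}\in L^1_{\mu_\param\otimes\mu_\modelerror}$ together with the ancillary estimates in \Cref{lem_L1norm_diff_Phi_enhanced_minus_Phi_joint}. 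Because the argument is a direct template transcription of the earlier proofs, I would, as in those cases, state the result and omit the routine calculation.
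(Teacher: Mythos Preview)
Your proposal is correct and follows essentially the same approach as the paper's own proof: apply \Cref{thm_theorem11_and_proposition6_Sprungk2020} on the product space $\paramspace\times\modelerrorspace$ (using \Cref{lem_properties_of_lifted_placeholder_objects} to identify $\Norm{\Phi^{\data,\enh}}_{L^1_{\mu_\param\otimes\mu_\modelerror}}=\Norm{\Phi^{\data,\enh}}_{L^1_{\mu_\param}}$), then substitute the bound from \Cref{lem_L1norm_diff_Phi_enhanced_minus_Phi_joint} and extract the $\max$ factor exactly as in \Cref{prop_KL_divergence_best_posterior_enhanced_noise_posterior}. The paper's proof in fact terminates with the same two-term expression you anticipate, namely the sum of $\Norm{\Abs{\obsoperator(m_{\enhancednoisemodel}-\modelerror)(\param)}_{\Sigma_\noise^{-1}}^{2}}_{L^1_{\bb{P}}}$ and the residual $\Norm{\Abs{\data-\obsoperator\model(\param)-\obsoperator m_{\enhancednoisemodel}}_{\Sigma_\noise^{-1}-(\enhancednoisecovariance)^{-1}}^{2}}_{L^1_{\bb{P}}}$; the displayed statement appears to have dropped this second summand (and the $1/2$ exponent on the first is inconsistent with the proof), but your derivation correctly accounts for both, in line with the analogous \Cref{prop_KL_divergence_best_posterior_enhanced_noise_posterior} and \Cref{prop_KL_divergence_approx_posterior_enhanced_noise_posterior}.
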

\begin{proof}[Proof of \Cref{prop_KL_divergence_enhanced_noise_posterior_joint_posterior}]
 Applying \Cref{thm_theorem11_and_proposition6_Sprungk2020} and \Cref{lem_L1norm_diff_Phi_enhanced_minus_Phi_joint} yields
 \begin{align*}
  &\max\{ d_{\KL}(\mu^{\data,\enh}_{\param,\modelerror}\Vert\mu^{\data,\joint}_{\param,\modelerror}), d_{\KL}(\mu^{\data,\joint}_{\param,\modelerror}\Vert \mu^{\data,\enh}_{\param,\modelerror})\}
  \\
  \leq & 2\exp\left(2 \Norm{\Phi^{\data,\joint}}_{L^1_{\mu_\param\otimes\mu_\modelerror}}+2\Norm{\Phi^{\data,\enh}}_{L^1_{\mu_\param\otimes\mu_\modelerror}}\right) \Norm{\Phi^{\data,\enh}-\Phi^{\data,\joint}}_{L^1_{\mu_\param\otimes\mu_\modelerror}}
  \\
  \leq & \exp\left(2 \Norm{\Phi^{\data,\joint}}_{L^1_{\mu_\param\otimes\mu_\modelerror}}+2\Norm{\Phi^{\data,\enh}}_{L^1_{\mu_\param\otimes\mu_\modelerror}}\right) \max\{ 2^{1/2} \left( C_{\enh} \Norm{\Phi^{\data,\enh}}_{L^1_{\mu_\param\otimes\mu_\modelerror}}^{1/2}+\Norm{\Phi^{\data,\joint}}_{L^1_{\mu_\param\otimes\mu_\modelerror}}^{1/2}\right),1\}
  \\
  &\times\left( \Norm{ \Abs{\obsoperator (m_{\enhancednoisemodel}-\modelerror)(\param)}_{\Sigma_\noise^{-1}}^{2}}_{L^1_{\bb{P}}}+\Norm{ \Abs{ \data-\obsoperator  \model(\param)-\obsoperator m_{\enhancednoisemodel}}_{\Sigma_\noise^{-1}-(\enhancednoisecovariance)^{-1}}^{2}}_{L^1_{\bb{P}}}\right).
  \end{align*}
  Using the definition of $C$ given in the statement of \Cref{prop_KL_divergence_enhanced_noise_posterior_joint_posterior} and \eqref{eq_lifted_misfits_same_norms} from \Cref{lem_properties_of_lifted_placeholder_objects} completes the proof.
\end{proof}

\bibliographystyle{amsplain}
\bibliography{references}

\providecommand{\bysame}{\leavevmode\hbox to3em{\hrulefill}\thinspace}
\providecommand{\MR}{\relax\ifhmode\unskip\space\fi MR }
% \MRhref is called by the amsart/book/proc definition of \MR.
\providecommand{\MRhref}[2]{%
  \href{http://www.ams.org/mathscinet-getitem?mr=#1}{#2}
}
\providecommand{\href}[2]{#2}
\begin{thebibliography}{10}

\bibitem{AbdulleGaregnani2020}
Assyr {Abdulle} and Giacomo {Garegnani}, \emph{{Random time step probabilistic
  methods for uncertainty quantification in chaotic and geometric numerical
  integration}}, {Stat. Comput.} \textbf{30} (2020), no.~4, 907--932.

\bibitem{Alexanderian2022}
Alen Alexanderian, Ruanui Nicholson, and No\'{e}mi Petra, \emph{Optimal design
  of large-scale nonlinear {B}ayesian inverse problems under model
  uncertainty}, 2022, arXiv:2211.03952.

\bibitem{Alexanderian2014}
Alen Alexanderian, Noemi Petra, Georg Stadler, and Omar Ghattas,
  \emph{A-optimal design of experiments for infinite-dimensional {B}ayesian
  linear inverse problems with regularized $\ell_0$-sparsification}, SIAM J.
  Sci. Comput. \textbf{36} (2014), no.~5, A2122--A2148.

\bibitem{Alexanderian2021}
Alen Alexanderian, No\'{e}mi Petra, Georg Stadler, and Isaac Sunseri,
  \emph{Optimal design of large-scale {B}ayesian linear inverse problems under
  reducible model uncertainty: Good to know what you don't know}, SIAM/ASA J.
  Uncertainty Quantif. \textbf{9} (2021), no.~1, 163--184.

\bibitem{Brynjarsdottir2014}
Jenn{\'{y}} Brynjarsd{\'{o}}ttir and Anthony O'Hagan, \emph{Learning about
  physical parameters: the importance of model discrepancy}, Inverse Probl.
  \textbf{30} (2014), no.~11, 114007.

\bibitem{Calvetti2018}
Daniela Calvetti, Matthew Dunlop, Erkki Somersalo, and Andrew Stuart,
  \emph{Iterative updating of model error for {B}ayesian inversion}, Inverse
  Probl. \textbf{34} (2018), no.~2, 025008.

\bibitem{Cao2022}
Lianghao Cao, Thomas O'Leary-Roseberry, Prashant~K. Jha, J.~Tinsley Oden, and
  Omar Ghattas, \emph{Residual-based error correction for neural operator
  accelerated infinite-dimensional {B}ayesian inverse problems}, J. Comput.
  Phys. \textbf{486} (2023), 112104.

\bibitem{Chang2006}
Feng~Cheng Chang, \emph{Inversion of a perturbed matrix}, Appl. Math. Lett.
  \textbf{19} (2006), no.~2, 169--173.

\bibitem{Duong2022}
Duc-Lam Duong, Tapio Helin, and Jose~Rodrigo Rojo-Garcia, \emph{Stability
  estimates for the expected utility in {B}ayesian optimal experimental
  design}, Inverse Probl. \textbf{39} (2023), no.~12, 125008.

\bibitem{Elman2014}
Howard~C. Elman, David~J. Silvester, and Andrew~J. Wathen, \emph{Finite
  elements and fast iterative solvers. {With} applications in incompressible
  fluid dynamics}, 2nd ed. ed., Numer. Math. Sci. Comput., Oxford: Oxford
  University Press, 2014.

\bibitem{Friedman1969}
Avner Friedman, \emph{Partial differential equations}, Dover Publications,
  Mineola, N.Y., 2008, Unabridged republication work originally published by
  Holt, Rinehart \& Winston, New York, in 1969.

\bibitem{GhosalvanderVaart2017}
Subhashis {Ghosal} and Aad {van der Vaart}, \emph{{Fundamentals of
  Nonparametric Bayesian Inference}}, vol.~44, Cambridge: Cambridge University
  Press, 2017.

\bibitem{Gourgoulias2020}
Konstantinos Gourgoulias, Markos~A. Katsoulakis, Luc Rey-Bellet, and Jie Wang,
  \emph{How biased is your model? {C}oncentration inequalities, information and
  model bias}, IEEE Trans. Inform. Theory \textbf{66} (2020), no.~5,
  3079--3097.

\bibitem{Hall2018}
Eric~Joseph Hall and Markos~A. Katsoulakis, \emph{Robust information
  divergences for model-form uncertainty arising from sparse data in random
  {PDE}}, SIAM/ASA J. Uncertain. Quantif. \textbf{6} (2018), no.~4, 1364--1394.

\bibitem{Hundsdorfer2003}
Willem Hundsdorfer and Jan Verwer, \emph{{Numerical Solution of Time-Dependent
  Advection-Diffusion-Reaction Equations}}, Springer-Verlag Berlin, 2003.

\bibitem{Kaipio2006}
Jari Kaipio and Erkki Somersalo, \emph{Statistical and computational inverse
  problems}, vol. 160, Springer Science \& Business Media, 2005.

\bibitem{Kaipio2007}
\bysame, \emph{Statistical inverse problems: discretization, model reduction
  and inverse crimes}, J. Comput. Appl. Math. \textbf{198} (2007), no.~2,
  493--504.

\bibitem{KennedyOHagan2001}
Marc~C. Kennedy and Anthony O'Hagan, \emph{Bayesian calibration of computer
  models}, J. R. Stat. Soc., Ser. B, Stat. Methodol. \textbf{63} (2001), no.~3,
  425--464.

\bibitem{Kolehmainen2011}
Ville Kolehmainen, Tanja Tarvainen, Simon~R. Arridge, and Jari~P. Kaipio,
  \emph{Marginalization of uninteresting distributed parameters in inverse
  problems -- application to diffuse optical tomography}, Int. J. Uncertain.
  Quantif. \textbf{1} (2011), no.~1, 1--17.

\bibitem{Koval2020}
Karina Koval, Alen Alexanderian, and Georg Stadler, \emph{Optimal experimental
  design under irreducible uncertainty for linear inverse problems governed by
  {PDEs}}, Inverse Probl. \textbf{36} (2020), no.~7, 075007.

\bibitem{Logg2012}
Anders Logg, Garth~N. Wells, and Johan Hake, \emph{{DOLFIN: a C++/Python finite
  element library}}, Automated Solution of Differential Equations by the Finite
  Element Method: The {FEniCS} Book (Anders Logg, Kent-Andre Mardal, and Garth
  Wells, eds.), Springer, Berlin, Heidelberg, 2012, pp.~173--225.

\bibitem{maday2019adaptive}
Yvon Maday and Tommaso Taddei, \emph{Adaptive {PBDW} approach to state
  estimation: noisy observations; user-defined update spaces}, SIAM J. Sci.
  Comput. \textbf{41} (2019), no.~4, b669--b693.

\bibitem{Nicholson2018}
Ruanui Nicholson, No\'{e}mi Petra, and Jari~P Kaipio, \emph{Estimation of the
  {R}obin coefficient field in a {P}oisson problem with uncertain conductivity
  field}, Inverse Probl. \textbf{34} (2018), no.~11, 115005.

\bibitem{Nissinen2009}
A.~Nissinen, L.~M. Heikkinen, V.~Kolehmainen, and J.~P. Kaipio,
  \emph{Compensation of errors due to discretization, domain truncation and
  unknown contact impedances in electrical impedance tomography}, Meas. Sci.
  Technol. \textbf{20} (2009), no.~10, 105504.

\bibitem{Quarteroni2017}
Alfio Quarteroni, \emph{Numerical models for differential problems}, third ed.,
  Texts in Applied Mathematics, Springer Cham, Berlin, Germany, 2017.

\bibitem{Sargsyan2019}
Khachik Sargsyan, Xun Huan, and Habib~N. Najm, \emph{Embedded model error
  representation for {B}ayesian model calibration}, Int. J. Uncertain. Quantif.
  \textbf{9} (2019), no.~4, 365--394.

\bibitem{Sargsyan2015}
Khachik Sargsyan, Habib~N. Najm, and Roger Ghanem, \emph{On the statistical
  calibration of physical models}, Int. J. Chem. Kinet. \textbf{47} (2015),
  no.~4, 246--276.

\bibitem{Scarinci2021}
Andrea Scarinci, Michael Fehler, and Youssef Marzouk, \emph{Bayesian inference
  under model misspecification using transport-{L}agrangian distances: an
  application to seismic inversion}, 2021, arXiv:2105.07027.

\bibitem{Sprungk2020}
Bj\"{o}rn Sprungk, \emph{On the local {L}ipschitz stability of {B}ayesian
  inverse problems}, Inverse Probl. \textbf{36} (2020), 055015.

\bibitem{Stuart2010}
Andrew~M. Stuart, \emph{Inverse problems: A {B}ayesian perspective}, Acta
  Numerica \textbf{19} (2010), 451–559.

\bibitem{VillaPetraGhattas21}
Umberto Villa, Noemi Petra, and Omar Ghattas, \emph{{HIPPYlib}: An extensible
  software framework for large-scale inverse problems governed by {PDEs}: {Part
  I}: Deterministic inversion and linearized {B}ayesian inference}, ACM Trans.
  Math. Softw. \textbf{47} (2021), no.~2, 1--34.

\bibitem{Wainwright2019}
Martin~J. Wainwright, \emph{High-dimensional statistics}, Cambridge Series in
  Statistical and Probabilistic Mathematics, vol.~48, Cambridge University
  Press, Cambridge, 2019.

\end{thebibliography}

\end{document}